\newtheorem{theorem}{Theorem}
\newtheorem{corollary}[theorem]{Corollary}
\newtheorem{lemma}[theorem]{Lemma}
\newtheorem{proposition}[theorem]{Proposition}
\newtheorem{definition}[theorem]{Definition}
\title{The fundamental inequality for cocompact Fuchsian groups}
\date{April 17, 2021}
\author{Petr Kosenko}
\address{University of Toronto, Canada}
\address{Higher School of Economics, Russia}
\email{petr.kosenko@mail.utoronto.ca}
\author{Giulio Tiozzo}
\address{University of Toronto, Canada}
\email{tiozzo@math.utoronto.ca}
\begin{document}

\begin{abstract}
We prove that the hitting measure is singular with respect to Lebesgue measure for any random walk on a cocompact Fuchsian group generated by translations joining opposite sides of a symmetric hyperbolic polygon. Moreover, the Hausdorff dimension of the hitting measure is strictly less than $1$. A similar statement is proven for Coxeter groups. 

Along the way, we prove for cocompact Fuchsian groups a purely geometric inequality for geodesic lengths, 
strongly reminiscent of the 
Anderson-Canary-Culler-Shalen inequality for free Kleinian groups.
\end{abstract}

\maketitle
Let $G < GL_2(\mathbb{R})$ be a countable group, and $\mu$ be a finitely supported, generating probability measure on $G$. 
We consider the random walk 
$$w_n := g_1 g_2 \dots g_n$$
where each $(g_i)$ is i.i.d. with distribution $\mu$. Let us fix a base point $o \in \mathbb{H}^2$. Then the \emph{hitting measure} $\nu$ of the random walk 
on $S^1 = \partial \mathbb{D}$ is 
$$\nu(A) := \mathbb{P}\left(\lim_{n \to \infty} w_n o \in A \right)$$
for any Borel set $A \subseteq \partial \mathbb{D}$. 
The hitting measure is also the unique $\mu$-\emph{harmonic}, or $\mu$-\emph{stationary}, measure, 
as it satisfies the convolution equation $\nu = \mu \star \nu$. 
On the other hand, the boundary circle $\partial \mathbb{D} = S^1$ also carries the \emph{Lebesgue} measure, which is the unique rotationally invariant measure on $S^1$.

\smallskip
In the 1970's, Furstenberg \cite{Fu} proved that for any discrete subgroup of $SL_2(\mathbb{R})$ there exists a measure $\mu$ such that the hitting measure of the corresponding random walk is absolutely continuous with respect to Lebesgue measure.
This was the first step to produce boundary maps, eventually leading to rigidity results. 
However, such measures $\mu$ are inherently infinitely supported, as they arise from discretization of Brownian motion (see also \cite{LS}). Another construction of absolutely continuous hitting measures, still infinitely supported, on general hyperbolic groups is given by \cite{Connell-Muchnik}.

On the other hand, if one replaces the random walk with a Brownian motion, then absolute continuity of harmonic measure only holds if the underlying manifold is highly homogeneous: to be precise, on a negatively curved surface, the hitting measure is absolutely continuous if and only if the curvature is constant (\cite{Le1}, \cite{Le2}). 

These two observations lead to the following

\smallskip
\textbf{Question A.} \emph{Given a finitely supported measure $\mu$ on $GL_2(\mathbb{R}) = \textup{Isom}(\mathbb{D})$, can the hitting measure for the random walk driven by $\mu$ be absolutely continuous with respect to Lebesgue measure?}
\smallskip

This question has been asked several times: in particular, Kaimanovich-Le Prince (\cite{KP}, page 259) conjectured that \emph{every} finitely supported measure on $SL_d(\mathbb{R})$ yields a singular hitting measure.

However, if one allows the group $G$ generated by the support of $\mu$ to be non-discrete, 
then there exist finitely supported measures which are absolutely continuous at infinity (\cite{Bo}, \cite{BPS}).

If $G$ is discrete but not cocompact, then the hitting measure is always singular: 
this fact has been proven in many contexts and with different proofs (\cite{GL}, \cite{BHM}, \cite{DKN}, \cite{KP}, \cite{GMT}, \cite{DG}, \cite{RT}), 
all of which exploit in various ways the fact that the cusp subgroup is highly distorted in $G$.

Thus, the only case which is still open is when $\mu$ is finitely supported and $G$ is discrete and cocompact.
Note that in this case the hyperbolic metric and the word metric are quasi-isometric to each other, hence distortion arguments 
do not work. 

By Poincar\'e's theorem (see e.g. \cite{Ma}), any cocompact Fuchsian group $G$ can be presented by identifying pairs of sides of a hyperbolic polygon. We consider a centrally symmetric polygon $P$, and take the group $G$ generated by hyperbolic translations identifying opposite sides. In order to apply Poincar\'e's theorem, we need the \emph{cycle condition} to hold: i.e., the sum of the angles at alternating vertices of $P$ must be of the form $\frac{2 \pi}{k}$ for some integer $k \geq 1$, see also Definition \ref{D:cycle}.

We prove the following. 


\begin{theorem} \label{T:main}
Let $P$ be a centrally symmetric hyperbolic polygon in the Poincar\'e disk $\mathbb{D}$, with $2m$ sides, 
satisfying the cycle condition, and let
$T := \{ t_1, t_2, \dots, t_{2 m} \}$ 
be the hyperbolic translations which identify opposite sides of $P$. 
Then, for any measure $\mu$ supported on the set $T$, 
the hitting measure on $S^1 = \partial \mathbb{D}$ is singular with respect to Lebesgue measure.
\end{theorem}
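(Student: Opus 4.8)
The plan is to compare two natural notions of "size" of the hitting measure against a geometric growth rate. On one side, if $\nu$ were absolutely continuous with respect to Lebesgue measure, then the entropy of the random walk would be forced to match the Furstenberg formula with the harmonic measure replaced (up to bounded error) by Lebesgue; this gives a rigid identity relating the asymptotic entropy $h$ and the Lyapunov exponent (drift) $\ell$, namely $h = 2\ell$ in the hyperbolic normalization, or more precisely the inequality $h \le \ell \cdot (\text{dimension of } \nu)$ becoming an equality. On the other side, the \emph{fundamental inequality} $h \le \ell v$, where $v$ is the volume growth (here $v = 1$ after normalization, since the group is cocompact Fuchsian and the limit set is all of $S^1$), tells us $\dim \nu = h/\ell \le 1$. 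So absolute continuity would force $h/\ell = 1$. The strategy is to \emph{rule this out} by producing a strict inequality $h < \ell$ for every such measure $\mu$ supported on $T$.

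First I would set up the relevant machinery: the Green's function, the $\mu$-harmonic measure $\nu$, its Radon–Nikodym cocycle, and the Furstenberg-type formula expressing $h$ as an integral of the Busemann cocycle against $\mu \times \nu$. Next, I would invoke the purely geometric inequality advertised in the abstract — the Anderson–Canary–Culler–Shalen–type lower bound on geodesic lengths for cocompact Fuchsian groups. The point of that inequality is that the generators $t_i$, being translations pairing sides of a polygon with a definite shape, cannot have translation lengths that are "too short" relative to how they move the base point; quantitatively it should give a bound of the form: for the displacement $d(o, t_i o)$ compared with the way $t_i$ acts near its attracting fixed point, there is a uniform gap. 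Feeding this gap into the integral formula for $h$ versus the formula for $\ell$ produces the strict inequality $h < \ell$, hence $\dim \nu < 1$, hence $\nu \perp \text{Leb}$.

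The main obstacle I anticipate is establishing the geometric inequality itself and, crucially, making it quantitative and uniform enough to survive integration. The ACCS inequality in the Kleinian setting is a delicate hyperbolic-trigonometry estimate exploiting that free groups act with a ping-pong/Schottky structure; in the cocompact Fuchsian case there is no free splitting, so one must instead use the polygon combinatorics and the cycle condition directly — presumably by tracking how a reduced word in the $t_i$ moves $o$ along a path that fellow-travels a geodesic in $\mathbb{D}$, and showing that the "overlap" between consecutive generator-translates of the fundamental domain forces a strict loss compared to the free case. A secondary technical point is handling the possibility that $\mu$ is not symmetric and its support generates $G$ only after taking the semigroup: one must check the hitting measure is still well-defined, non-atomic, and that the entropy/drift identities for absolute continuity hold in that generality; this is standard but needs to be stated carefully. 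Finally, one must confirm that $\dim \nu < 1$ genuinely implies singularity — which is immediate, since an absolutely continuous measure on $S^1$ has dimension $1$.
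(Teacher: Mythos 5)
You have the right high-level framework in view: the BHM entropy--drift characterization of absolute continuity, the dimension formula $\dim_H \nu = h/\ell$ (with $v=1$ for cocompact Fuchsian groups), and the need for a geometric inequality on the translation lengths of the side-pairing generators. But there is a genuine gap in the mechanism you propose for converting that geometric inequality into singularity.

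You propose to ``feed the gap into the integral formula for $h$ versus the formula for $\ell$'' and thereby deduce $h < \ell$ directly. This step does not work as stated. The geometric inequality $\sum_{g \in S} \frac{1}{1+e^{\ell(g)}} < 1$ constrains only the translation lengths of the $2m$ generators, whereas the entropy $h$ is an asymptotic quantity governed by $\mu^{*n}$ for $n \to \infty$; there is no direct way to substitute a bound on $\ell(g_i)$ into the Kaimanovich--Furstenberg entropy integral $h = -\int \log \frac{dg_*\nu}{d\nu}\,d\mu\,d\nu$ without already knowing something quantitative about $\nu$ itself. The paper's criterion (Theorem \ref{T:criterion}) bridges this gap by a completely different route that you do not anticipate: it lifts $\mu$ to a measure $\widetilde\mu$ on a free group $F_m$ via the obvious surjection, uses Lemma \ref{L:homo} to compare Green metrics ($d_{\widetilde\mu} \geq d_\mu$), computes the hitting measure of each cylinder $C(h_i^{\pm 1})$ in $\partial F_m$ exactly in terms of first-passage probabilities $x_i$, $\check x_i$ (Lemma \ref{L:cylinder}), and sums these to $1$. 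The geometric inequality then forces, for some index $i$, the cylinder measure to exceed $2/(1+e^{\ell(g_i)})$, which after an algebraic manipulation gives $\ell(g_i) > \min(-\log x_i, -\log \check x_i) = \min(d_{\widetilde\mu}(e,h_i), d_{\widetilde\mu}(e,h_i^{-1}))$. Meanwhile, Lemma \ref{L:small-tr} --- the one place where the BHM equivalence enters, via subadditivity of the Green metric along powers of a loxodromic --- shows absolute continuity would force $\ell(g_i) \leq d_\mu(e,g_i) \leq d_{\widetilde\mu}(e,h_i)$ and likewise for $g_i^{-1}$. That contradiction is the proof. None of this free-group/cylinder-set machinery appears in your sketch, and without it or some replacement for it, your argument does not close.

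A secondary point: your intuition for why the geometric inequality should hold (tracking how reduced words fellow-travel geodesics, overlap of translates of the fundamental domain) is not how the proof goes, and is arguably the wrong picture since the group is not free. The actual proof is a hyperbolic-trigonometric optimization: parameterize the centrally symmetric polygon by the distances $a_i$ from the center to the sides and the angles $\alpha_i$, $\gamma_i$ via the standard pentagon relation; use $\ell(g_i) \geq 2a_i$ to reduce the claim to $\sum z_i > m-1$ for $z_i = \tanh a_i$ under the angle-sum constraint; turn the constraint into $\sum \arccos(z_i z_{i+1}) \leq \pi$; and prove the resulting optimization inequality (Theorem \ref{T:ineq}) by elementary estimates, then handle obtuse angles by a separate dual-polygon reduction. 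Your remark that ``dim $\nu < 1$ implies singularity'' is correct and immediate, but it is the last and easiest step; the missing content is the criterion that converts the geometric inequality into failure of the bounded-harmonic-vs-hyperbolic-metric condition.
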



\begin{center}
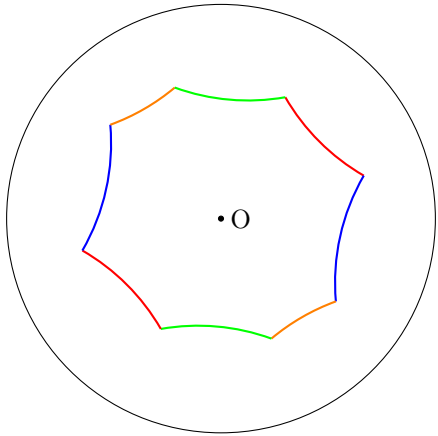
\begin{figure}[h!]
\begin{tikzpicture}[scale = 0.95]
\draw (0, 0) circle (3cm);
\draw [thick, color=red] (0.9, 1.7) arc (210 : 240 : 3 cm) node (a) {}; 
\draw [thick, color=blue] (a) arc (150 : 185 : 3 cm) node (b) {}; 
\draw [thick, color=orange] (b) arc (110 : 130 : 3 cm) node(c) {}; 
\draw [thick, color=green] (c) arc (70 : 100 : 3 cm) node (d) {};
\draw [thick, color = red] (d) arc (30 : 60 : 3 cm) node (e) {}; 
\draw [thick, color = blue] (e) arc (-30 : 5 : 3 cm) node (f) {}; 
\draw [thick, color = orange] (f) arc (-70 : -50 : 3 cm) node (g){}; 
\draw [thick, color = green ](g) arc (-110 : -80 : 3 cm) ;
\filldraw[black] (0,0) circle (1pt) node[anchor=west] {O};
\end{tikzpicture}
\caption{A symmetric hyperbolic octagon. Sides of the same color are identified by the Fuchsian group $G$.}
\end{figure}
\end{center}

Recently, singularity of hitting measure for cocompact Fuchsian groups whose fundamental domain is a \emph{regular} polygon has been proven by \cite{Kosenko} and \cite{CLP}, except for a finite number of cases with few sides.
(Note that the current paper is the first one to deal with the case of a hyperbolic octagon, even the regular one).

\medskip
We also prove the following version for reflection groups.

\begin{theorem} \label{T:main-cox}
Let $P$ be a centrally symmetric, hyperbolic polygon with $2m$ sides and interior angles $\frac{\pi}{k_i}$, 
with $k_i \in \mathbb{N}^+$ for $1 \leq i \leq 2m$.
Let $\mu$ be a probability measure supported on the set $R := \{ r_1, \dots, r_{2m} \}$ of hyperbolic reflections
on the sides of $P$, with $\mu(r_i) = \mu(r_{i+m})$ for all $1 \leq i \leq m$. Then the hitting measure
for the random walk driven by $\mu$ is singular with respect to Lebesgue measure. 
\end{theorem}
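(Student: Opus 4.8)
The plan is to reduce Theorem \ref{T:main-cox} to Theorem \ref{T:main} by passing to a finite-index subgroup. Let $W$ be the Coxeter group generated by the reflections $R = \{r_1,\dots,r_{2m}\}$ in the sides of the centrally symmetric polygon $P$. Since all interior angles are of the form $\pi/k_i$, the group $W$ is a genuine hyperbolic reflection group with fundamental domain $P$, and it contains an orientation-preserving subgroup $W^+$ of index $2$, which is a cocompact Fuchsian group. The key observation is that the products $t_i := r_i r_{i+m}$ of reflections in opposite sides are precisely the hyperbolic translations identifying opposite sides of the doubled polygon $P \cup r_i(P)$ — more precisely, I expect that $W^+$ is exactly the Fuchsian group generated by translations joining opposite sides of a centrally symmetric polygon $P'$ with $2m$ sides (obtained by reflecting $P$ across one of its sides, or across the center), and that the cycle condition for $P'$ follows from the angle conditions $\pi/k_i$ on $P$.

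**First** I would set up the relationship between the reflection random walk on $W$ and a random walk on $W^+$ precisely. Given the symmetry hypothesis $\mu(r_i) = \mu(r_{i+m})$, consider the random walk $w_n = r_{j_1}\cdots r_{j_n}$; I would group the steps into pairs, noting that $w_{2n}$ lies in $W^+$. The induced walk on $W^+$ is driven by the convolution-type measure whose support consists of the products $r_i r_j$. The point is that this is still a finitely supported random walk on the cocompact Fuchsian group $W^+$, and by the theory of hitting measures (and the fact that $W^+$ has finite index in $W$, so they share the same Gromov boundary $S^1$), the hitting measure $\nu_W$ of the reflection walk and the hitting measure $\nu_{W^+}$ of the induced walk are mutually absolutely continuous — in fact $\nu_W = \nu_{W^+}$ up to the bounded Radon–Nikodym factor coming from the first step distribution. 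Hence $\nu_W$ is singular with respect to Lebesgue measure if and only if $\nu_{W^+}$ is.

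**The remaining step** is to check that the induced measure on $W^+$ is of the form covered by Theorem \ref{T:main}, or more robustly, that the geometric inequality underlying Theorem \ref{T:main} applies to it. The cleanest route: the induced walk is supported on the set $\{ r_i r_j : 1 \le i,j \le 2m\}$, which is larger than a set of opposite-side translations, so Theorem \ref{T:main} does not literally apply. I would instead argue directly using the same geometric input — the fundamental inequality for geodesic lengths on cocompact Fuchsian groups alluded to in the abstract (the Anderson–Canary–Culler–Shalen-type inequality), applied to $W^+$ acting on $\mathbb{H}^2$ with the polygon $P$ (or its double) as fundamental domain. Concretely, singularity follows once one shows that the dimension/entropy-vs-drift comparison forces the Hausdorff dimension of $\nu_W$ below $1$; and the drift and entropy of the reflection walk with respect to the hyperbolic metric can be computed from $P$ directly, since $P$ is a Dirichlet-type domain for $W$ and the hyperbolic displacement of each generator $r_i$ is controlled by the geometry of the corresponding side.

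**The main obstacle** I anticipate is not the subgroup reduction itself but verifying that the induced random walk on $W^+$ still satisfies the hypotheses (centrally symmetric polygon, translations joining opposite sides, cycle condition) needed to invoke Theorem \ref{T:main} — the support $\{r_i r_j\}$ is a priori too large. The resolution is to run the argument of Theorem \ref{T:main} natively in the reflection-group setting: rather than factoring through a Fuchsian translation group, establish directly the geometric length inequality for the generating reflections of $W$ and deduce that entropy is strictly smaller than the product of drift and the Lebesgue-measure-theoretic dimension constant. The central symmetry of $P$ together with the symmetry condition $\mu(r_i)=\mu(r_{i+m})$ plays the same role here that central symmetry of $P$ and arbitrariness of $\mu$ on $T$ play in Theorem \ref{T:main}: it guarantees the cancellation needed for the inequality to be strict. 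So the honest plan is to prove Theorem \ref{T:main} and Theorem \ref{T:main-cox} in parallel, with the reflection case requiring only that one replaces each translation $t_i$ by the corresponding reflection $r_i$ and tracks the index-two passage to $W^+$ to identify the boundary $S^1$ and transfer the singularity conclusion.
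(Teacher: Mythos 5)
Your first route---pass to the orientation-preserving index-$2$ subgroup $W^+$, transfer the hitting measure, and invoke Theorem~\ref{T:main}---does not close, and you correctly identify why: the induced measure on $W^+$ is supported on all products $r_i r_j$, including elliptic rotations and short translations that are not opposite-side pairings, so the hypotheses of Theorem~\ref{T:main} are not met. The fallback you sketch in the last paragraph (``run the argument of Theorem~\ref{T:main} natively in the reflection setting'') points in the right direction, but as stated it has a genuine gap: you never identify what the relevant translation-length inequality should say for reflections. Each $r_i$ is an involution with $\ell(r_i)=0$, so there is no ``geometric length inequality for the generating reflections''; the quantity that plays the role of $\ell(g_i)$ is $\ell(r_i r_{i+m})$, the translation length of the opposite-side composition, which equals $2\ell(g_i)$. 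Your proposal does not surface this, nor does it explain how the symmetry hypothesis $\mu(r_i)=\mu(r_{i+m})$ is actually used.

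The paper does not pass to $W^+$ at all. Since the Coxeter group $W$ acts geometrically on $\mathbb{H}^2$, Theorem~\ref{T:BHM} applies to $W$ directly, and the argument is run in parallel to Theorem~\ref{T:criterion} with two substitutions. First, in place of the free group $F_m$ one maps onto the free product $F'_{2m}=\langle s_1,\dots,s_{2m}\mid s_i^2=e\rangle$, and Lemma~\ref{L:coxeter_cyl} gives the cylinder measure $\nu(C(s_i))=x_i/(1+x_i)$ instead of the more complicated expression in Lemma~\ref{L:cylinder}. Second, one applies Lemma~\ref{L:small-tr} to the element $r_i r_{i+m}$ and bounds $d_\mu(e,r_i r_{i+m})\le d_\mu(e,r_i)+d_\mu(e,r_{i+m})\le d_{\tilde\mu}(e,h_i)+d_{\tilde\mu}(e,h_{i+m})$; here the hypothesis $\mu(r_i)=\mu(r_{i+m})$ is essential, since it forces $d_{\tilde\mu}(e,h_i)=d_{\tilde\mu}(e,h_{i+m})=-\log x_i$, yielding $\ell(r_i r_{i+m})\le -2\log x_i$, hence $x_i/(1+x_i)\le 1/(1+e^{\ell(r_i r_{i+m})/2})$. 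Summing and using $\sum_{i=1}^{2m}x_i/(1+x_i)=1$ contradicts the criterion (Theorem~\ref{T:Coxeter_ineq}), which in turn is supplied by the same geometric inequality Theorem~\ref{T:ineq-mcs} applied via $\ell(r_ir_{i+m})=2\ell(g_i)$, and the angle hypothesis $\pi/k_i$ with $k_i\geq 2$ guarantees there are no obtuse angles so the geometric inequality needs only the acute-angle case. To repair your proposal you would need to state Lemma~\ref{L:coxeter_cyl}, identify $\ell(r_ir_{i+m})$ as the correct quantity, and explain where $\mu(r_i)=\mu(r_{i+m})$ enters; the index-two step is a red herring.
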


\medskip
\subsection*{The fundamental inequality} This problem is closely related to the following ``numerical characteristics" of random walks. 
Recall that the \emph{entropy} \cite{Avez} of $\mu$ is defined as
$$h := \lim_{n \to \infty} \frac{- \sum_{g \in G} \mu^n(g) \log \mu^n(g)}{n}$$
and the \emph{drift}, or \emph{rate of escape}, is 
$$\ell := \frac{d_\mathbb{H}(o, w_n o)}{n},$$
where $d_\mathbb{H}$ denotes the hyperbolic metric and the limit exists almost surely.
The drift also equals the classical Lyapunov exponent for random matrix products \cite{FK}.
Finally, the \emph{volume growth} of $G$ is 
$$v := \limsup_{n \to \infty} \frac{1}{n} \log \# \{ g \in G \ : \ d_\mathbb{H}(o, go) \leq n \}.$$
The inequality 
\begin{equation} \label{E:hlv}
h \leq \ell v
\end{equation}
has been established by Guivarc'h \cite{Guivarch} and is called the \emph{fundamental inequality} by Vershik \cite{Vershik}. 
Several authors (e.g. \cite[Question A]{Vershik}) have asked: 

\smallskip
\textbf{Question B.} \emph{Under which conditions is inequality \eqref{E:hlv} an equality?} 
\smallskip

For discrete, cocompact actions, Question B is equivalent to Question A: indeed, by \cite{BHM} (see also \cite{GMM} and Theorem \ref{T:BHM}), inequality \eqref{E:hlv} is strict if and only if the hitting 
measure is singular with respect to Lebesgue measure. 

If one replaces the hyperbolic metric $d_\mathbb{H}$ with a \emph{word metric} on $G$, then 
 \cite{GMM} prove that the inequality is strict unless the group $G$ is virtually free. 
Observe that on cocompact Fuchsian groups any word metric is quasi-isometric to the hyperbolic metric; however, 
being quasi-isometric is not strong enough a condition to guarantee that the hitting measure is in the same class, 
hence the result from \cite{GMM} does not settle Question A. 
Note that for a cocompact Fuchsian group it is well-known that $v = 1$ (see e.g. \cite{PR}). 

Our result also has consequences on the Hausdorff dimension of the hitting measure. 
Recall that the Hausdorff dimension of a measure $\nu$ on a metric space is 
the infimum of the Hausdorff dimensions of subsets of full measure. 
Moreover, by \cite{Tan}, for cocompact Fuchsian groups, the Hausdorff dimension of the hitting measure 
satisfies, for almost every $x \in S^1$,
$$\textup{dim}_H(\nu) = \lim_{r \to 0^+} \frac{\log \nu(B(x, r))}{\log r} = \frac{h}{\ell}.$$

Thus, Theorem \ref{T:main} implies:  

\begin{corollary}
Under the hypotheses of Theorem \ref{T:main}, and of Theorem \ref{T:main-cox}, the inequality $h < \ell$ is strict.
Hence, the hitting measure $\nu$ has Hausdorff dimension strictly less than $1$.
\end{corollary}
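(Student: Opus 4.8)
The plan is to derive this corollary formally from Theorems~\ref{T:main} and~\ref{T:main-cox}, using only results already in the literature.

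First, in both the translation setting of Theorem~\ref{T:main} and the reflection setting of Theorem~\ref{T:main-cox} the group $G$ generated by the support of $\mu$ is a discrete, cocompact group of isometries of $\mathbb{H}^2$, and $\mu$ is finitely supported and generating; by Theorem~\ref{T:main} (respectively Theorem~\ref{T:main-cox}) the hitting measure $\nu$ on $S^1$ is singular with respect to Lebesgue measure. I would then apply the dichotomy of Blach\`ere--Ha\"issinsky--Mathieu (Theorem~\ref{T:BHM}; see also \cite{GMM}): for such a random walk the fundamental inequality $h \le \ell v$ is an equality if and only if $\nu$ is absolutely continuous with respect to Lebesgue measure. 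Singularity of $\nu$ therefore forces the strict inequality $h < \ell v$.

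Next I would invoke the standard fact that $v = 1$ for these groups: for any cocompact group of isometries of $\mathbb{H}^2$, the number of orbit points $go$ with $d_\mathbb{H}(o, go) \le n$ grows like $e^{n}$, since the volume of a hyperbolic ball of radius $n$ is comparable to $e^{n}$ (see \cite{PR}). Combined with the previous step this gives $h < \ell$. Finally, since $\mu$ is generating and $G$ is non-elementary (indeed non-amenable), the random walk has positive drift, i.e.\ $\ell > 0$; the dimension formula of \cite{Tan} then shows that $\nu$ is exact-dimensional with $\textup{dim}_H(\nu) = h/\ell$, and combining this with $h < \ell$ yields $\textup{dim}_H(\nu) < 1$, as claimed.

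All of the substantive content of the corollary already lies in Theorems~\ref{T:main}, \ref{T:main-cox} and~\ref{T:BHM} and in the formula of \cite{Tan}, so I do not expect a genuine obstacle. The only point meriting a word of justification is that \cite{Tan} (and, in its original form, the result behind Theorem~\ref{T:BHM}) is phrased for orientation-preserving cocompact Fuchsian groups, whereas the Coxeter groups of Theorem~\ref{T:main-cox} contain orientation-reversing isometries; this is handled by passing to the orientation-preserving subgroup of index two, on which the induced random walk is again finitely supported and generating and for which $h$, $\ell$, $v$ and the Hausdorff dimension of the hitting measure are unchanged, so the cited results apply after this reduction.
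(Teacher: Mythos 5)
Your proposal is correct and follows exactly the route the paper intends: singularity from Theorems~\ref{T:main}/\ref{T:main-cox}, the Blach\`ere--Ha\"issinsky--Mathieu equivalence (Theorem~\ref{T:BHM}) to turn singularity into strictness of $h<\ell v$, the fact $v=1$ for cocompact Fuchsian groups, and Tanaka's dimension formula $\dim_H(\nu)=h/\ell$. One small imprecision in your final remark: upon passing to the index-two orientation-preserving subgroup (where the induced first-return measure is $\mu^2$, which is indeed finitely supported since $\mu$ is supported on reflections), $h$ and $\ell$ are not unchanged but both doubled; what is preserved is the ratio $h/\ell$, the growth $v$, and the hitting measure itself, which is all that is needed.
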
 

The approach of this paper is based on the fact that cocompactness forces at least some of the generators 
to have long enough translation lengths (this is related to the \emph{collar lemma}: two intersecting closed geodesics cannot be both short at the same time; also, the quotient Riemann surface has a definite positive area). Indeed, in Theorem \ref{T:criterion} we prove a criterion for singularity in terms of the translation lengths of the generators, and then we show the following purely geometric inequality.

\begin{theorem} \label{T:ineq-mcs-intro}
Let $P$ be a centrally symmetric polygon with $2m$ sides, satisfying the cycle condition, and let 
$S := \{ g_1, \dots, g_{2 m} \}$ be the set of hyperbolic translations identifying opposite sides of $P$. Then 
we have
\begin{equation}
\label{E:McS-intro}
\sum_{g \in S} \frac{1}{1 + e^{\ell(g)}} < 1,
\end{equation}
where $\ell(g)$ denotes the translation length of $g$ in the hyperbolic metric. 
\end{theorem}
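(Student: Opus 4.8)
The plan is to relate each summand $\frac{1}{1+e^{\ell(g)}}$ to the geometry of the half-plane bounded by the axis of the corresponding translation $g$, and then exploit the fact that the $2m$ sides of $P$, being the images of the $m$ side-pairings, carve the disk $\mathbb{D}$ into disjoint regions. Concretely, for a hyperbolic translation $g$ with axis $\gamma$ and translation length $\ell(g)$, I would first establish a lemma identifying $\frac{1}{1+e^{\ell(g)}}$ with the normalized visual measure (as seen from the center $O$, after moving $O$ to the axis, or more robustly via a Busemann/cross-ratio computation) of a specific boundary arc attached to $\gamma$ — for instance the arc subtended by the image $g(s)$ of the side $s$ of $P$ that $g$ pairs, or the "shadow" of the half-plane that $g$ pushes points into. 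The key elementary fact is that if $H$ is the half-plane bounded by the axis of $g$ and not containing a suitably chosen point, then the harmonic/visual measure of $\partial H$ from the base point is an explicit function of the distance from the base point to the axis, and the central symmetry of $P$ together with the side-pairing structure pins that distance down in terms of $\ell(g)$.

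The main steps, in order: (1) Set up coordinates using central symmetry — the center $O$ of symmetry of $P$ is equidistant from all $m$ pairs of opposite sides, and each translation $t_i$ has translation length $\ell(t_i)$ equal to twice the distance between the pair of opposite sides it identifies, with axis passing through $O$'s "mirror" configuration; deduce that the distance from $O$ to the axis of $t_i$ is controlled. (2) Prove the arc-measure lemma: the quantity $\frac{1}{1+e^{\ell(g)}}$ equals the visual angle (normalized to total mass $1$) at $O$ of the arc of $S^1$ cut off by the geodesic side $s_i = \overline{t_i^{-1}}$-translate, or equivalently of one of the two sides in the $i$-th pair — here the computation is the standard one: a geodesic at distance $d$ from $O$ subtends a visual half-angle $\theta$ with $\cos\theta$ or $\tan(\theta/2)$ an explicit function of $d$, and one matches this with $\ell(g)/2 = d$ or a similar relation forced by step (1). (3) Observe that the $2m$ arcs produced this way are pairwise disjoint sub-arcs of $S^1$ — because the $2m$ sides of $P$ are disjoint geodesics bounding disjoint half-planes exterior to $P$ — and their union is a proper subset of $S^1$ (the cycle condition guarantees the vertices of $P$ correspond to points not covered, or at least that the polygon itself has positive visual measure from $O$), giving $\sum \frac{1}{1+e^{\ell(g)}} = \sum (\text{visual mass of arc}_i) < 1$ with strict inequality.

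I expect step (2), the precise arc-measure identity, to be the main obstacle — specifically getting the constant exactly right so that the geometric quantity is $\frac{1}{1+e^{\ell(g)}}$ and not merely comparable to it, and making sure the relevant distance is $\ell(g)$ itself (via central symmetry forcing the axis of $t_i$ to pass through the midpoint configuration) rather than some other length. There is a subtlety about which of the two complementary arcs of a side to take and whether the base point should be $O$ or a vertex; the central symmetry is what makes the natural choice canonical, so I would lean on it heavily. A secondary technical point is verifying disjointness of the arcs rigorously: since $P$ is convex and the sides are geodesics, the exterior half-planes are disjoint, but one must check the arcs — which are boundary traces of those half-planes — do not overlap even at endpoints, which follows because distinct sides meet only at vertices of $P$, interior points of $\mathbb{D}$. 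The strictness of the inequality is then automatic from the fact that the visual measure of $P$ itself (a region with nonempty interior) is positive.
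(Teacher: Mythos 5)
Your proposal takes a genuinely different route from the paper, which proves the inequality by parametrizing centrally symmetric polygons via the distances $a_i$ from the center to the sides and the angles $\alpha_i, \gamma_i$ (via the Buser formula $\cos\gamma_i = -\cosh a_i \cosh a_{i+1}\cos\alpha_i + \sinh a_i \sinh a_{i+1}$), bounding $\ell(g_i)\geq 2a_i$, substituting $z_i=\tanh a_i$, and reducing to an elementary optimization problem $\sum_i \arccos(z_iz_{i+1})\geq \pi$ under $\sum z_i = m-1$, which is then solved by Cauchy--Schwarz; the obtuse-angle case requires a separate geometric ``dual polygon'' reduction. Your approach is in the spirit of Culler--Shalen and is heuristically attractive, but as written it contains two gaps, and the second one is fatal.

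First, the arc-measure identity you hope for in step (2) is not correct. The geodesic containing a side at hyperbolic distance $a$ from $O$ cuts off an arc of normalized Lebesgue (visual) measure $\frac{1}{\pi}\arccos(\tanh a)$, not $\frac{1}{1+e^{2a}}=\frac{1-\tanh a}{2}$. These two quantities agree only at $a=0$ and $a=\infty$; for $0<a<\infty$ one has the strict inequality
\[
\frac{1}{1+e^{2a}} \;<\; \frac{1}{\pi}\arccos(\tanh a),
\]
so at best you get an upper bound on each summand by the visual arc measure. (Also, $\ell(g_i)$ is only bounded below by $2a_i$, not equal to it, since the axis of the side-pairing translation need not be the common perpendicular to the two sides; this again gives an inequality in the direction you want.)

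Second, and decisively, the disjointness claim in step (3) fails. The $2m$ geodesics obtained by extending the sides of the convex polygon $P$ to complete geodesics are \emph{not} disjoint: adjacent sides meet at a vertex $v$ of $P$, which is an interior point of $\mathbb{D}$, so the two geodesics through them cross at $v$. When two geodesics cross in the interior, the half-planes they bound on the side away from $O$ intersect in a wedge that reaches $\partial\mathbb{D}$, and the corresponding boundary arcs overlap. You have the implication backwards: it is precisely \emph{because} the sides meet at interior vertices that the arcs do overlap. Consequently the sum of the visual arc measures is strictly greater than the measure of their union, and nothing forces that sum to be $<1$. If instead you replace the full-geodesic arcs by the true ``shadows'' of the side \emph{segments} as seen from $O$, those shadows are indeed disjoint and partition $S^1$ (so their measures sum to exactly $1$ and equal $\alpha_i/2\pi$), but then the termwise comparison $\frac{1}{1+e^{2a_i}}\leq \alpha_i/2\pi$ fails in general: it is equivalent to $\alpha_i\geq\pi(1-z_i)$, which is easily violated (e.g. take $z_i=z_{i+1}=\tfrac12$ so that the constraint $\cos\alpha_i\leq z_iz_{i+1}$ only gives $\alpha_i\geq\arccos\tfrac14\approx 0.42\pi<\tfrac{\pi}{2}$). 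So neither the full-geodesic arcs nor the segment shadows yield the inequality by a simple disjointness argument, and the proposal needs a fundamentally new idea to close the gap.
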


Interestingly, our geometric inequality has exactly the same form as the one in \cite{CS}, \cite{ACCS}. However, 
it is not a consequence of theirs; see Section \ref{S:geometry}.

\subsection*{Acknowledgements}
This material is based upon work supported by the National Science Foundation under Grant No. DMS-1928930 while the second named author participated in a program hosted by the Mathematical Sciences Research Institute in Berkeley, California, during the Fall 2020 semester.
The second named author is also partially supported by NSERC and the Sloan Foundation. 
We thank R. Canary, I. Gekhtman, C. McMullen and G. Panti for useful comments on the first draft.

\section{Preliminary results}

Let $\mu$ be a probability measure on a countable group $G$. We assume that $\mu$ is \emph{generating}, i.e. the semigroup generated by 
the support of $\mu$ equals $G$.
We define the \emph{step space} as $(G^\mathbb{N}, \mu^\mathbb{N})$, and the map $\pi : G^\mathbb{N} \to G^\mathbb{N}$ as $\pi ((g_n)_{n \in \mathbb{N}}) := (w_n)_{n \in \mathbb{N}}$, with for any $n$
$$w_n := g_1 g_2 \dots g_n.$$
The target space of $\pi$ is denoted by $\Omega$ and called the \emph{path space}; as a set, it equals $G^\mathbb{N}$ , and is equipped with the measure $\mathbb{P}_\mu := \pi_\star (\mu^\mathbb{N})$.

\medskip
Then, we define the \emph{first-passage function} $F_\mu(x, y)$ as 
$$F_\mu(x, y) := \mathbb{P}_\mu(\exists n \ : \ w_n x = y)$$
for any $x, y \in G$, and the  \emph{Green metric} $d_\mu$ on $G$, introduced in \cite{BB}, as 
$$d_\mu(x, y) := - \log F_\mu(x, y).$$
The following fact is well-known. 

\begin{lemma} \label{L:homo}
Let $p : G \to H$ be a group homomorphism, let $\mu$ be a probability measure on $G$, and let $\overline{\mu} := p_\star \mu$. Then, 
for any $x, y \in G$,
$$d_{\overline{\mu}}(p(x), p(y)) \leq d_\mu(x, y).$$
\end{lemma}

\begin{proof}
Since $p$ induces a map from paths in $G$ to paths in $H$, we have $\overline{\mu}^n(p(g)) \geq \mu^n(g)$ for any $g \in G$, any $n \geq 0$. 
Hence 
$$\mathbb{P}_{\overline{\mu}}(p(x), p(y)) \geq \mathbb{P}_\mu(x, y)$$
for any $x, y \in G$, from which the claim follows.
\end{proof}

We shall use the following criterion, which relates the absolute continuity of the hitting measure to the fundamental inequality.
Recall that a group action is \emph{geometric} if it is isometric, properly discontinuous, and cocompact.

\begin{theorem}\textup{(\cite[Corollary 1.4, Theorem 1.5]{BHM},  \cite{Tan}, \cite{GT})} \label{T:BHM}
Let $\Gamma$ be a non-elementary hyperbolic group acting geometrically on $\mathbb{H}^2$, 
endowed with the geometric distance $d = d_\mathbb{H}$ induced
from the action. Consider a generating probability measure $\mu$ on $\Gamma$ with finite support. 
Then the following conditions are equivalent:
\begin{enumerate}
\item
The equality $h = \ell v$ holds.
\item
The Hausdorff dimension of the hitting measure $\nu$ on $S^1$ is equal to $1$.
\item
The measure $\nu$ is equivalent to the Lebesgue measure on $S^1$.
\item
For any $o \in \mathbb{H}^2$, there exists a constant $C > 0$ such that for any $g \in \Gamma$ we have
$$|d_\mu(e, g) - d_\mathbb{H}(o, g o)| \leq C.$$
\end{enumerate}
\end{theorem}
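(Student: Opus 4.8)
The plan is to establish the four statements by a cyclic chain of implications, $(3)\Rightarrow(2)\Rightarrow(1)\Rightarrow(4)\Rightarrow(3)$, where the genuinely hard link is $(1)\Rightarrow(4)$. Fix $o\in\mathbb{H}^2$. Since $\Gamma$ acts geometrically on $\mathbb{H}^2$, passing to a finite-index torsion-free subgroup shows that the volume growth is $v=1$ (it is the volume entropy of a compact hyperbolic surface), so that Guivarc'h's inequality \eqref{E:hlv} reads $h\le\ell$. Moreover there is a Patterson--Sullivan measure $\nu_{PS}$ on $S^1=\partial\mathbb{H}^2$ which lies in the Lebesgue class and satisfies the shadow estimate
\[
\nu_{PS}(\mathcal{O}(o,go))\asymp e^{-v\, d_\mathbb{H}(o,go)}=e^{-d_\mathbb{H}(o,go)},
\]
where $\mathcal{O}(o,x)$ denotes the shadow, seen from $o$, of a ball of a fixed large radius around $x$. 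With this in hand, $(3)\Rightarrow(2)$ is immediate: any subset of $S^1$ of full Lebesgue measure has Hausdorff dimension $1$, so if $\nu$ is equivalent to Lebesgue it has the same null sets and hence $\dim_H\nu=\dim_H(\mathrm{Leb})=1$.

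For $(2)\Rightarrow(1)$ I would invoke the dimension formula: since the round metric on $S^1$ is a visual metric of parameter $1$ for $d_\mathbb{H}$ and $v=1$, the formula of \cite{Tan} gives $\dim_H\nu=h/\ell$ (which is $\le 1$ by Guivarc'h, consistently), so $\dim_H\nu=1$ forces $h=\ell=\ell v$; the same formula yields $(1)\Rightarrow(2)$ directly. For $(4)\Rightarrow(3)$, recall from \cite{BB} that the Green metric $d_\mu$ is a left-invariant hyperbolic (quasi-)metric on $\Gamma$, and that, by the Ancona-type deviation estimates for random walks on hyperbolic groups, the hitting measure satisfies the parallel shadow estimate
\[
\nu(\mathcal{O}(o,go))\asymp F_\mu(e,g)=e^{-d_\mu(e,g)}.
\]
Dividing the two shadow estimates gives, for every $g$,
\[
\frac{\nu(\mathcal{O}(o,go))}{\nu_{PS}(\mathcal{O}(o,go))}\asymp e^{\,d_\mathbb{H}(o,go)-d_\mu(e,g)},
\]
so under hypothesis $(4)$ this ratio lies in a fixed compact subinterval of $(0,\infty)$, uniformly in $g$. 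Since the shadows $\mathcal{O}(o,go)$ of bounded radius form a fine Vitali-type family generating the topology of $S^1$, a standard covering/differentiation argument then shows $\nu$ and $\nu_{PS}$ are mutually absolutely continuous with Radon--Nikodym derivative bounded above and below; as $\nu_{PS}$ is equivalent to Lebesgue, so is $\nu$, which is $(3)$.

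The crux is $(1)\Rightarrow(4)$. Here one uses the deeper properties of the Green metric established in \cite{BB} and \cite{BHM}: $d_\mu$ is hyperbolic, its associated Patterson--Sullivan boundary measure on $\partial\Gamma=S^1$ is exactly the hitting measure $\nu$, its volume growth equals $1$ (i.e. $\sum_{g}e^{-s\,d_\mu(e,g)}=\sum_g F_\mu(e,g)^s$ converges precisely for $s>1$), and its drift is the entropy, $\tfrac1n d_\mu(e,w_n)\to h$ almost surely. Consider the Manhattan curve of the pair $(d_\mathbb{H},d_\mu)$, i.e. the boundary of $\{(a,b)\in\mathbb{R}_{\ge0}^2:\sum_{g\in\Gamma}e^{-a\,d_\mathbb{H}(o,go)-b\,d_\mu(e,g)}<\infty\}$. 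It is a convex decreasing curve through the points $(1,0)$ and $(0,1)$, hence lies on or below the chord joining them; and its one-sided derivative at the endpoint $(0,1)$ (the endpoint "corresponding to the walk") equals $-\ell/h$, the ratio of the $d_\mathbb{H}$- and $d_\mu$-drifts of the trajectory. The equality $h=\ell=\ell v$ is therefore exactly the statement that this tangent line coincides with the chord; by convexity (a convex function lying below a chord and tangent to it at an endpoint must coincide with it), the whole Manhattan curve is the straight segment from $(1,0)$ to $(0,1)$. Finally one invokes the rigidity of the Manhattan curve: for two hyperbolic left-invariant metrics on $\Gamma$ with equal volume growth, the Manhattan curve is a straight segment if and only if the two metrics are within bounded additive distance (equivalently, their boundary Patterson--Sullivan measures are equivalent). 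This yields $|d_\mu(e,g)-d_\mathbb{H}(o,go)|\le C$ for all $g$, i.e. $(4)$.

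The main obstacle is precisely this last implication. Everything feeding into it — that $d_\mu$ is hyperbolic with volume growth $1$ and drift $h$, that the harmonic measure is its Patterson--Sullivan measure, the shadow estimate $\nu(\mathcal{O})\asymp F_\mu$, and the regularity and rigidity of the Manhattan curve — rests on the Ancona/deviation inequalities for random walks on hyperbolic groups together with a careful analysis of the boundary action, and it is here that essentially all of \cite{BHM} is used. By contrast, the implications $(3)\Rightarrow(2)$, $(2)\Leftrightarrow(1)$, and $(4)\Rightarrow(3)$ are comparatively soft, requiring only the two shadow estimates and the dimension formula recorded above.
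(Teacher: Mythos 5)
The paper offers no proof of Theorem \ref{T:BHM}: it is quoted verbatim from the cited references, with (1)$\Leftrightarrow$(3)$\Leftrightarrow$(4) being \cite[Theorem 1.5]{BHM} and the dimension statement (2) coming from \cite{Tan} (and \cite{GT}). So the only meaningful comparison is with the proofs in those references, and against that benchmark your outline is essentially correct: the two shadow estimates $\nu_{PS}(\mathcal{O}(o,go))\asymp e^{-d_\mathbb{H}(o,go)}$ and $\nu(\mathcal{O}(o,go))\asymp F_\mu(e,g)$ (the latter resting on Ancona's inequalities for finitely supported $\mu$ on a hyperbolic group), the fact that $d_\mu$ is a hyperbolic, left-invariant quasi-metric with critical exponent $1$ whose quasiconformal boundary measure is $\nu$, and the Vitali differentiation argument for (4)$\Rightarrow$(3) are exactly the ingredients of \cite{BHM}; the identity $\dim_H\nu=h/(\ell v)$ handles (1)$\Leftrightarrow$(2). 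You correctly isolate (1)$\Rightarrow$(4) as the crux. One remark on that step: your Manhattan-curve argument (convex curve through $(1,0)$ and $(0,1)$, tangent of slope $-\ell/h$ at the endpoint, hence degenerate when $h=\ell$, plus the rigidity ``straight segment $\Leftrightarrow$ roughly similar metrics'') is a legitimate route, but it is closer in spirit to the later treatment of \cite{GMM} than to the original argument of \cite{BHM}, which compares the exponents of the two shadow estimates along $\mathbb{P}_\mu$-typical sample paths and uses an ergodic/subadditivity argument to upgrade the equality of drifts to a uniform additive bound. Both routes ultimately rely on the same deviation inequalities, and both require justifying the differentiability (or at least the value of the one-sided derivative) of the Manhattan curve at the endpoint, respectively the almost-sure convergence $\frac{1}{n}d_\mu(e,w_n)\to h$; you flag these as imported facts rather than proving them, which is consistent with the paper's own decision to cite the result rather than reprove it.
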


\medskip

For each $g \in G$, let $\ell(g)$ denote its translation length, namely 
$$\ell(g) := \lim_{n \to \infty} \frac{d_{\mathbb{H}}(o, g^n o)}{n}.$$
Equivalently, $\ell(g)$ is the length of the corresponding closed geodesic on the quotient surface.
The mechanism to utilize Theorem \ref{T:BHM} is through the following lemma, similar to the one from \cite{Kosenko}.

\begin{lemma} \label{L:small-tr}
Suppose that the hitting measure is absolutely continuous. Then for any $g \in G$ we have 
$$\ell(g) \leq d_\mu(e, g).$$
\end{lemma}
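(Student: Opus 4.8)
The plan is to exploit the subadditivity of the Green metric together with the fact that translation length records the asymptotic rate of escape along the cyclic subgroup $\langle g \rangle$. Concretely, fix $g \in G$. First I would observe that the Green metric $d_\mu$ is left-invariant and satisfies the triangle inequality, so iterating gives
$$d_\mu(e, g^n) \leq n \, d_\mu(e, g)$$
for every $n \geq 1$ (and indeed $d_\mu(e,g^n)\le d_\mu(e,g^{n-1})+d_\mu(g^{n-1},g^n)=d_\mu(e,g^{n-1})+d_\mu(e,g)$ by left-invariance). This is the easy half: it holds with no hypothesis on the hitting measure.

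Next I would bring in absolute continuity. By Theorem \ref{T:BHM}, if the hitting measure is absolutely continuous (equivalently equivalent to Lebesgue), then there is a constant $C > 0$ with
$$|d_\mu(e, h) - d_\mathbb{H}(o, h o)| \leq C \qquad \text{for all } h \in \Gamma.$$
Applying this with $h = g^n$ yields $d_\mu(e, g^n) \geq d_\mathbb{H}(o, g^n o) - C$. Combining with the subadditivity bound,
$$d_\mathbb{H}(o, g^n o) - C \leq d_\mu(e, g^n) \leq n \, d_\mu(e, g).$$
Dividing by $n$ and letting $n \to \infty$, the left-hand side tends to $\ell(g)$ by definition of translation length, while the right-hand side is constantly $d_\mu(e,g)$; hence $\ell(g) \leq d_\mu(e, g)$, as desired.

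The only genuine subtlety — and the step I would be most careful about — is the invocation of Theorem \ref{T:BHM}: one must check that its hypotheses are met, i.e. that $G$ is a non-elementary hyperbolic group acting geometrically on $\mathbb{H}^2$ and that $\mu$ is finitely supported and generating. For the cocompact Fuchsian groups in Theorem \ref{T:main} this is automatic (cocompact Fuchsian groups are non-elementary hyperbolic, act geometrically, and $T$ generates $G$), so the chain above goes through verbatim. Everything else is soft: left-invariance and the triangle inequality for $d_\mu$ follow directly from the definition via $F_\mu$, and the limit defining $\ell(g)$ is exactly what appears on the left after dividing by $n$. Note also that the additive constant $C$ washes out in the limit, which is why only a quasi-isometry-type estimate is needed rather than an exact equality.
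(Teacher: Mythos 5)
Your proof is correct and uses the same two ingredients as the paper's: subadditivity of the Green metric along powers of $g$, and the bounded-discrepancy characterization $|d_\mu(e,h) - d_\mathbb{H}(o,ho)|\le C$ from Theorem \ref{T:BHM}. The paper phrases it as a contradiction (and picks $o$ on the axis of $g$ so that $d_\mathbb{H}(o,g^ko)=k\ell(g)$ exactly), while you argue directly and invoke the limit definition of $\ell(g)$ for an arbitrary base point; these are trivially equivalent, so this is essentially the same argument.
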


\begin{proof}
If not, then $\ell(g) > d_\mu(e, g) \geq 0$, hence $g$ is loxodromic. Let us pick some $o \in \mathbb{H}^2$ which lies on the axis of $g$, so that 
$d_\mathbb{H}(o, g^k o) = \ell(g^k) = k \ell(g)$ for any $k$. Moreover, by the triangle inequality for the Green metric one has $d_\mu(e, g^k) \leq k d_\mu(e, g)$, hence 
$$d_\mathbb{H}(o, g^k o) - d_\mu(e, g^k) \geq k \ell(g) - k d_\mu(e, g) = k (\ell(g) - d_\mu(e, g))$$
thus, since  $\ell(g) - d_\mu(e, g) > 0$, 
$$\sup_{k \in \mathbb{N}} \left|d_\mathbb{H}(o, g^k o) - d_\mu(e, g^k) \right| = + \infty,$$
which contradicts Theorem \ref{T:BHM}. 
\end{proof}

Let $F$ be a free group, freely generated by a finite set $S$. Recall the (hyperbolic) \emph{boundary} $\partial F$ of $F$ is the set of 
infinite, reduced words in the alphabet $S \cup S^{-1}$. Given a finite, reduced word $g$, 
we denote as $C(g) \subseteq \partial F$ the \emph{cylinder} determined by $g$, namely 
the set of infinite, reduced words which start with $g$. 

\begin{lemma} \label{L:cylinder}
	Consider a random walk on the free group 
	$$F_m = \left\langle s_1^{\pm 1}, \dots, s_m^{\pm 1} \right\rangle,$$ 
	defined by a probability measure $\mu$ on the generators. If we denote $x_i := F_\mu(e, s_i)$, $\check{x}_i := F_\mu(e, s_i^{-1})$, and the hitting measure on the boundary of $F_m$ by $\nu$, then 
	\[
	\nu(C(s_i)) = \frac{x_i (1 - \check{x}_{i})}{1 - x_i \check{x}_{i} }.
	\]
\end{lemma}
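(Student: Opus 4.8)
The plan is to compute the hitting measure of a cylinder by a first-step analysis, conditioning on the first increment $g_1$ of the random walk and tracking when the resulting trajectory converges into $C(s_i)$. Write $p_j := \mu(s_j)$ and $\check p_j := \mu(s_j^{-1})$. For the boundary point $\xi := \lim_n w_n$ to lie in $C(s_i)$, the path $(w_n)$ must eventually stay in the subtree of reduced words beginning with $s_i$; equivalently, the reduced geodesic from $e$ to $\xi$ passes through $s_i$. Let me introduce the auxiliary quantity $q_i := \mathbb{P}_\mu(\exists n : w_n = s_i)= F_\mu(e, s_i) = x_i$, the probability that the walk ever visits $s_i$, and more usefully the probability that, started at $e$, the walk converges to a boundary point in $C(s_i)$ \emph{without ever returning to $e$ after first leaving toward $s_i$}. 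The tree structure of $F_m$ makes these events decompose multiplicatively.

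Concretely, I would argue as follows. First, the walk converges to $C(s_i)$ if and only if it visits the vertex $s_i$ and, from the first time it is at $s_i$, it converges to the boundary of the subtree hanging off the edge $(e, s_i)$ on the far side — i.e. it never crosses back over the edge $(e,s_i)$ to reach $e$ again. By the strong Markov property and translation invariance, $\nu(C(s_i)) = x_i \cdot (1 - r_i)$, where $r_i$ is the probability that a walk started at $s_i$ ever reaches $e$. To compute $r_i$: from $s_i$, reaching $e$ means traversing the edge from $s_i$ back to $e$, which in the tree is the same as a walk started at $e$ ever reaching $s_i^{-1} \cdot(\text{something})$; using the identification of the edge $(s_i, e)$ with a translate of the edge $(e, s_i^{-1})$, one gets that $r_i$ satisfies $r_i = \check x_i + (\text{paths that wander into the } s_i\text{-subtree and come back})$, leading to the relation $r_i = \dfrac{\check x_i(1-x_i)}{1 - x_i \check x_i}$ after solving, or more cleanly one computes $1 - r_i = \dfrac{1 - \check x_i}{1 - x_i \check x_i}$ directly. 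Multiplying, $\nu(C(s_i)) = x_i \cdot \dfrac{1-\check x_i}{1 - x_i \check x_i}$, which is the claim.

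A cleaner route, which I would actually write up, avoids solving a quadratic. Observe that $\partial F_m = C(s_i) \sqcup \bigsqcup_{j \ne i} C(s_j) \sqcup C(s_i^{-1}) \sqcup \bigsqcup_{j} C(s_j^{-1})$ is not directly what I want; instead I use the Markov chain on the edges. The key identities are: (i) the probability $x_i$ that the walk from $e$ ever hits $s_i$; and (ii) conditioned on hitting $s_i$, by the strong Markov property the future is a walk from $s_i$, so the boundary point lands in $C(s_i)$ unless the walk returns across the edge — and once it returns to $e$ it "forgets" it was ever at $s_i$. So $\nu(C(s_i)) = x_i (1 - \rho_i) + x_i \rho_i \cdot \nu(C(s_i))$, where $\rho_i$ is the probability a walk from $s_i$ returns to $e$; solving gives $\nu(C(s_i)) = \dfrac{x_i(1-\rho_i)}{1 - x_i \rho_i}$. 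Finally identify $\rho_i = \check x_i$: a walk from $s_i$ returning to $e$ corresponds, under the left-translation by $s_i$ that is an automorphism-like identification on the relevant portion of the tree, to a walk from $e$ hitting $s_i^{-1}$, so $\rho_i = F_\mu(e, s_i^{-1}) = \check x_i$. Substituting yields $\nu(C(s_i)) = \dfrac{x_i(1 - \check x_i)}{1 - x_i \check x_i}$.

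The main obstacle is making the strong-Markov/renewal decomposition fully rigorous: one must be careful that "returning to $e$" genuinely resets the process (this uses that $F_\mu$ is translation-invariant and that on a tree the only way between two vertices is through the connecting edge, so excursions across $(e,s_i)$ are the right renewal events), and that the event $\{\xi \in C(s_i)\}$ is, up to a $\mathbb{P}_\mu$-null set, exactly $\{$the walk crosses $(e,s_i)$ toward $s_i$ and its last crossing of that edge is toward $s_i\}$. Convergence of $w_n$ to a boundary point almost surely (so that $\nu$ is well-defined and carried by $\partial F_m$) is standard for non-elementary random walks and may be quoted. Everything else is bookkeeping with geometric series, and the identification $\rho_i = \check x_i$ should be stated as a short lemma or inline remark since it is the one place the free-group structure is used in an essential way.
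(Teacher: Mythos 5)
Your ``cleaner route'' is correct and is essentially the paper's proof: the paper expands the same renewal across the edge $(e,s_i)$ as the explicit geometric series $\sum_{n\geq 0} x_i^{n+1}\check{x}_i^{\,n}(1-\check{x}_i)$, while you set up and solve the equivalent fixed-point equation $\nu(C(s_i))=x_i(1-\check x_i)+x_i\check x_i\,\nu(C(s_i))$, and the identification $\rho_i=\check x_i$ is immediate from $F_\mu(x,y)=F_\mu(e,yx^{-1})$. One slip in your first, discarded sketch: with $r_i$ defined as ``the probability a walk from $s_i$ ever reaches $e$,'' one has $r_i=\check x_i$, not $\check x_i(1-x_i)/(1-x_i\check x_i)$; the formula $\nu(C(s_i))=x_i(1-r_i)$ would only be correct if $r_i$ were instead the probability that the walk from $s_i$ ultimately converges outside $C(s_i)$.
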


A similar lemma is stated in \cite[Exercise 5.14]{Lalley}.

\begin{proof}
	For any infinite word $w = s_{j_1} s_{j_2} s_{j_3} \dots$ there exist two possibilities:
	\begin{enumerate}
		\item There exists a subword $s_{j_1} \dots s_{j_k}$ such that it equals $s_i$ in $F_m$
		\item No subword $s_{j_1} \dots s_{j_k}$ equals $s_i$, so it belongs to the set of paths which never hit $s_i$.
	\end{enumerate}
	In the first case we denote this subword by $w_1$, and we consider $w_1^{-1} w$ and we apply the same procedure, but replacing $s_i$ with $s_i^{-1}$ at each subsequent step. This procedure yields the equality
	\[
	\begin{aligned}
		\nu(C(s_i)) &= \mathbb{P}(e \rightarrow s_i \nrightarrow e) + \mathbb{P}(e \rightarrow s_i \rightarrow e \rightarrow s_i \nrightarrow e) + \dots = \\ 
		&= \sum_{n=0}^{\infty} F_\mu(e, s_i)^{n+1} F_\mu(e, s_i^{-1})^{n} (1 - F_\mu(e, s_i^{-1})) \\
		& = F_\mu(e, s_i)(1 - F_\mu(e, s_i^{-1})) \sum_{n=0}^{\infty} \left(F_\mu(e, s_i)F_\mu(e, s_i^{-1})\right)^n = \frac{x_i (1 - \check{x}_{i})}{1 - x_i \check{x}_{i} }.
	\end{aligned}
	\]
\end{proof}

\section{A criterion for singularity}

\begin{theorem} \label{T:criterion}
Let $\mu$ be a finitely supported measure on a cocompact Fuchsian group, and let $S$ be the support of $\mu$. 
Suppose that 
\begin{equation}
\label{E:McS2}
\sum_{g \in S} \frac{1}{1 + e^{\ell(g)}} < 1.
\end{equation}
Then the hitting measure $\nu$ on $\partial \mathbb{D}$ is singular with respect to Lebesgue measure. 
\end{theorem}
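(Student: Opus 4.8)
The plan is to argue by contradiction: assuming the hitting measure is not singular, I will use Lemma~\ref{L:small-tr} to convert the hypothesis \eqref{E:McS2} on translation lengths into an upper bound on first-passage probabilities, and then compare the walk on the Fuchsian group with a walk on a free group, where Lemma~\ref{L:cylinder} computes the mass of each cylinder exactly. So suppose $\nu$ is not singular with respect to Lebesgue measure. By the standard pure-type dichotomy for $\mu$-stationary measures --- the absolutely continuous and the singular parts of $\nu$ are again $\mu$-stationary, because the action on $S^1$ is by diffeomorphisms, and the stationary measure is unique --- the measure $\nu$ is then absolutely continuous, so Lemma~\ref{L:small-tr} applies and gives $\ell(g)\le d_\mu(e,g)$, i.e.\ $F_\mu(e,g)\le e^{-\ell(g)}$, for every $g\in G$.

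Next I would transfer the problem to a free group. Write the support as $S=\{s_1^{\pm1},\dots,s_m^{\pm1}\}$ (the generators are hyperbolic, hence of infinite order, and we take $S$ symmetric, which is the relevant case), let $F=F_m$ be the free group on formal generators $a_1,\dots,a_m$, and let $\phi\colon F\to G$ be the canonical surjection with $\phi(a_i)=s_i$. Lift $\mu$ to the measure $\widetilde\mu$ on $F$ supported on the generators, with $\widetilde\mu(a_i^{\varepsilon})=\mu(s_i^{\varepsilon})$, so that $\phi_\star\widetilde\mu=\mu$. By Lemma~\ref{L:homo}, and using that $\ell$ is invariant under inversion,
\[
F_{\widetilde\mu}(e,a_i^{\pm1})\le F_\mu(e,s_i^{\pm1})\le e^{-\ell(s_i)};
\]
setting $x_i:=F_{\widetilde\mu}(e,a_i)$ and $\check x_i:=F_{\widetilde\mu}(e,a_i^{-1})$ we thus have $x_i,\check x_i\le e^{-\ell(s_i)}<1$. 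Since $\phi$ maps the subgroup of $F$ generated by the support of $\widetilde\mu$ onto the non-elementary group $G$, that subgroup is a non-elementary free group, so the $\widetilde\mu$-walk converges almost surely to a point of $\partial F$, with hitting measure $\nu_F$. The $2m$ cylinders $C(a_1),\dots,C(a_m),C(a_1^{-1}),\dots,C(a_m^{-1})$ partition $\partial F$, so summing the formula of Lemma~\ref{L:cylinder} over all of them gives
\[
1=\sum_{i=1}^m\Bigl(\nu_F(C(a_i))+\nu_F(C(a_i^{-1}))\Bigr)=\sum_{i=1}^m\frac{x_i+\check x_i-2x_i\check x_i}{1-x_i\check x_i}.
\]

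The last step is an elementary one-variable estimate, calibrated so that the two sides of the inequality match up. I claim that for $x,\check x,q\in[0,1)$ with $x\le q$ and $\check x\le q$ one has
\[
\frac{x+\check x-2x\check x}{1-x\check x}\le\frac{2q}{1+q},
\]
which follows because the left-hand side, regarded as a function of $x$, has derivative $(1-\check x)^2/(1-x\check x)^2\ge0$, and symmetrically in $\check x$, so it is non-decreasing in each variable and attains its maximum over $[0,q]^2$ at $x=\check x=q$, where its value is $\frac{2q}{1+q}$. Applying this with $q=e^{-\ell(s_i)}$, for which $\frac{2q}{1+q}=\frac{2}{1+e^{\ell(s_i)}}$, and summing over $i$, I would obtain
\[
1\le\sum_{i=1}^m\frac{2}{1+e^{\ell(s_i)}}=\sum_{g\in S}\frac{1}{1+e^{\ell(g)}}<1
\]
by the hypothesis \eqref{E:McS2} --- a contradiction. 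Hence $\nu$ is singular.

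I do not expect a deep obstacle internal to this argument: the substance is carried by the results quoted above --- in particular Theorem~\ref{T:BHM}, entering through Lemma~\ref{L:small-tr} --- and the proof of the criterion is essentially an assembly of these with the elementary estimate. The one point requiring care is keeping every inequality oriented the same way: the homomorphism $\phi$ can only decrease first-passage probabilities, and a long translation length also forces a small first-passage probability, so every cylinder mass $\nu_F(C(a_i^{\pm1}))$ is pushed down; yet these masses must sum to $1$, while \eqref{E:McS2} asserts that their natural upper bounds sum to strictly less than $1$, and that is the contradiction. A secondary, purely technical point is the reduction to a symmetric support, which is automatic in the applications of the criterion, where $S$ consists of side-pairings of a polygon.
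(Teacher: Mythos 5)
Your proof is correct and follows essentially the same route as the paper's: lift to a free group via Lemma~\ref{L:homo}, use Lemma~\ref{L:cylinder} to write the total mass as $\sum_i\frac{x_i+\check{x}_i-2x_i\check{x}_i}{1-x_i\check{x}_i}=1$, bound each $x_i,\check{x}_i$ by $e^{-\ell(g_i)}$ via Lemma~\ref{L:small-tr}, and derive a contradiction with \eqref{E:McS2}. The only difference is the final algebraic step: you prove monotonicity of $(x,\check{x})\mapsto\frac{x+\check{x}-2x\check{x}}{1-x\check{x}}$ in each variable and sum over all $i$, whereas the paper instead pigeonholes one index where the strict inequality must hold and uses the bound $\frac{2-x-\check{x}}{x+\check{x}-2x\check{x}}\ge\min\{1/x,1/\check{x}\}$; both are valid, and your version is arguably cleaner. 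You are also right to flag that the argument (in both your write-up and the paper) tacitly assumes the support $S$ is symmetric, which is the case in the intended application.
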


\medskip

\begin{proof}
Let $F$ be a free group of rank $m$, with generators $(h_i)_{i =1}^{m}$, and let 
$\widetilde{\mu}$ be a measure on $F$ with $\widetilde{\mu}(h_i^\pm) = \mu(g_i^\pm)$. 
Moreover, let us denote
\begin{align*}
x_i &:= F_{\widetilde{\mu}}(e, h_i) =  \mathbb{P}_{\widetilde{\mu}}( \exists n \ : \ w_n = h_i) \\ 
\check{x}_i & := F_{\widetilde{\mu}}(e, h_i^{-1}).
\end{align*}
Then we have 
\begin{equation} \label{E:all-measure}
\sum_{i =1}^{m}  \frac{x_i (1 - \check{x}_{i})}{1 - x_i \check{x}_{i} } + \frac{\check{x}_i (1 - x_{i})}{1 - x_i \check{x}_{i} } = 1.
\end{equation}
Indeed, if $\widetilde{\nu}$ is the hitting measure on $\partial F$, by Lemma \ref{L:cylinder} the measure of the cylinder $C(h_i)$ starting with $h_i$ is
$$\widetilde{\nu}(C(h_i)) = \frac{x_i (1 - \check{x}_{i})}{1 - x_i \check{x}_{i} }, \qquad \widetilde{\nu}(C(h_i^{-1})) = \frac{\check{x}_i (1 - x_{i})}{1 - x_i \check{x}_{i} }$$
from which, since the cylinders are disjoint and cover the boundary, \eqref{E:all-measure} follows.

Then, by equation \eqref{E:McS2}, there exists an index $i$ such that 
$$ \frac{2}{1 + e^{\ell(g_i)}} < \frac{x_i (1 - \check{x}_{i})}{1 - x_i \check{x}_{i} } + \frac{\check{x}_i (1 - x_{i})}{1 - x_i \check{x}_{i} }$$
which is equivalent to 
$$e^{\ell(g_i)} > \frac{2 - x_i - \check{x}_i}{x_i + \check{x}_i - 2 x_i \check{x}_i}$$
Finally, an algebraic computation yields
$$\frac{2 - x_i - \check{x}_i}{x_i + \check{x}_i - 2 x_i \check{x}_i} \geq \min \left\{ \frac{1}{x_i}, \frac{1}{\check{x}_i} \right\}$$
thus we obtain 
\begin{equation} \label{E:exp-l}
\ell(g_i) > \inf \{ - \log x_i, - \log \check{x}_i \}.
\end{equation}

If the hitting measure $\nu$ on $S^1 = \partial \mathbb{D}$ is absolutely continuous, then by Lemma \ref{L:small-tr}
and Lemma \ref{L:homo} we get
$$\ell(g_i) \leq d_{\mu}(e, g_i) \leq d_{\widetilde{\mu}}(e, h_i) = - \log x_i$$ 
for any $i$. If we apply the same inequality to $g_i^{-1}$, we also have 
$$\ell(g_i)  = \ell(g_i^{-1}) \leq d_{\mu}(e, g_i^{-1}) \leq d_{\widetilde{\mu}}(e, h_i^{-1}) = - \log \check{x}_i$$ 
hence 
$$\ell(g_i) \leq \inf  \{ -\log x_i, -\log \check{x}_i \}$$
which contradicts \eqref{E:exp-l}, showing that $\nu$ is singular with respect to Lebesgue measure.
\end{proof}


\section{Parameterization of the space of polygons}  \label{S:geometry}



Let $P$ be a convex, compact polygon in the hyperbolic disk $\mathbb{D}$, with $2m$ sides and interior angles $\{ \gamma_1, \dots, \gamma_{2m} \}$. 

We say that $P$ is \emph{centrally symmetric} if there exists a point $o \in \mathbb{D}$ so that $P$ is invariant under reflection across $o$. This clearly implies that opposite sides have equal length, and opposite angles are equal. 

Poincar\'e's theorem provides conditions to ensure that the group generated by side pairings is discrete (see \cite{Ma}). 
In particular, one needs a condition on the angles, which in our setting can be formulated as follows. 

\begin{definition}\label{D:cycle}
A centrally symmetric polygon $P$ satisfies the \emph{cycle condition} if there exists an integer $k \geq 1$ such that 
$$\sum_{i = 1}^{m} \gamma_{2 i} = \sum_{i = 1}^m \gamma_{2i - 1} = \frac{2 \pi}{k}.$$
\end{definition}

Let $S := \{ g_1, \dots, g_{2 m} \}$ be the set of hyperbolic translations identifying opposite sides of $P$.
By Poincar\'e's theorem \cite{Ma}, if the polygon $P$ satisfies the cycle condition, then the group $G$ generated by $S$ is discrete\footnote{Note that in the usual formulation of Poincar\'e's theorem there are two cases:  if $m$ is even, all vertices of $P$ are identified by $G$; if $m$ is odd, there are two elliptic cycles, corresponding to alternate vertices of $P$. If $m$ is even and $k = 1$, the polygon $P$ does not satisfy the classical version of Poincar\'e's theorem, but if $P$ is symmetric, the group generated is still discrete, so all our arguments still apply.}.

\medskip
The following is our main geometric inequality. 

\begin{theorem} \label{T:ineq-mcs}
Let $P$ be a centrally symmetric, hyperbolic polygon satisfying the cycle condition, with $2m$ sides, and let $S := \{ g_1, \dots, g_{2 m} \}$ be the set of hyperbolic translations identifying opposite sides of $P$.
Then we have
\begin{equation}
\label{E:McS3}
\sum_{g \in S} \frac{1}{1 + e^{\ell(g)}} < 1.
\end{equation}
\end{theorem}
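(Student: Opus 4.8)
The plan is to reduce the inequality to an explicit trigonometric computation in terms of the side lengths and angles of $P$, exploiting central symmetry to cut the number of free parameters roughly in half. First I would set up coordinates: let $o$ be the center of symmetry, place $o$ at the origin of $\mathbb{D}$, and let $d_i$ denote the distance from $o$ to the $i$-th side of $P$ (the length of the perpendicular dropped from $o$). Since $g_i$ is the hyperbolic translation pairing side $i$ with its opposite side, and the two sides are reflections of each other through $o$, the axis of $g_i$ passes through $o$ and the translation length is $\ell(g_i) = 2 d_i$. Thus the left-hand side of \eqref{E:McS3} becomes $\sum_{i=1}^{2m} \frac{1}{1 + e^{2 d_i}}$, and by central symmetry this equals $2\sum_{i=1}^{m} \frac{1}{1+e^{2d_i}}$. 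The first key step is therefore to express, via standard hyperbolic trigonometry, the $d_i$ in terms of the combinatorial data of $P$ — the side lengths $s_i$ and the interior angles $\gamma_i$ — so that the inequality is an honest statement about an admissible polygon.

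The second step is to find the right geometric quantity to sum. I would triangulate $P$ by joining $o$ to all $2m$ vertices, producing $2m$ triangles; the $i$-th triangle $\Delta_i$ has one side equal to the $i$-th side of $P$, with the opposite vertex at $o$. The apex angle of $\Delta_i$ at $o$, call it $2\alpha_i$, satisfies $\sum_i 2\alpha_i = 2\pi$; the foot of the perpendicular of length $d_i$ splits $\Delta_i$ into two right triangles with acute angle $\alpha_i$ at $o$. From the right-triangle relation one gets something like $\sinh d_i = \cosh(\text{half-side}) \cdot (\text{something})$, or more usefully $\cos\alpha_i = \tanh d_i \cdot \cosh(\ell_i/2)$ — I would pin down the exact identity. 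The point is to show that $\frac{1}{1+e^{2d_i}} = \frac{1}{2}(1 - \tanh d_i) \le \frac{1}{2}(1 - \cos\alpha_i') $ for a suitable angle, or to directly bound $\frac{1}{1+e^{2d_i}}$ by a linear function of the angle deficits, so that summing and using $\sum \alpha_i = \pi$ together with the cycle condition and the Gauss–Bonnet constraint (the polygon has positive area, so $\sum\gamma_i < (2m-2)\pi$) yields a sum strictly less than $1$.

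The third step handles the strictness and the cycle condition. The cycle condition $\sum\gamma_{2i} = \sum\gamma_{2i-1} = 2\pi/k$ is exactly what makes the side-pairing group discrete, and it should enter as the constraint that makes the worst case a genuine (compact, positive-area) hyperbolic polygon rather than a degenerate Euclidean one; in the degenerate limit $d_i \to 0$ the sum would approach $\sum \frac12 = m \ge 4 > 1$, so the inequality is genuinely using hyperbolicity — this is why I expect the analysis to require Gauss–Bonnet rather than being formal. I anticipate the main obstacle to be precisely this: proving the bound uniformly over the full moduli space of admissible symmetric polygons, since naive term-by-term estimates are too weak (each term can be close to $\frac12$). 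The resolution is likely a convexity or rearrangement argument showing the supremum of $\sum \frac{1}{1+e^{2d_i}}$ over admissible $(d_i)$ subject to the angle constraints is attained at a boundary configuration — perhaps the regular polygon or a limit where several sides coincide — where it can be checked to equal or approach $1$ but never exceed it, with strictness coming from the compact polygon being in the interior of the constraint region.

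Finally, note that combining Theorem~\ref{T:ineq-mcs} with Theorem~\ref{T:criterion} immediately yields Theorem~\ref{T:main}, and an analogous reflection-group computation (translation lengths replaced by twice the distances to the reflecting sides) gives Theorem~\ref{T:main-cox}; I would remark on this reduction but the substance is entirely in establishing \eqref{E:McS3}.
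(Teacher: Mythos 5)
Your setup is essentially the paper's (reduce to an optimization over distances $d_i$ from the center $o$ to the sides, using $\sum_{i=1}^m \alpha_i = \pi$ as the constraint), but the proposal stops precisely where the real work begins, and two of its guesses about the mechanism are off.

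First, a correctable slip: the claim $\ell(g_i) = 2d_i$ is not right in general. Writing $\sigma$ for the point-reflection through $o$, one has $\sigma g_i$ fixes the side $s_i$ setwise and swaps its endpoints, so $\sigma g_i$ is the $\pi$-rotation about the \emph{midpoint} $m_i$ of $s_i$; hence $g_i$ is the composition of the two $\pi$-rotations at $o$ and $m_i$, which gives $\ell(g_i) = 2\,d(o,m_i)$. Since the foot of the perpendicular from $o$ to $s_i$ need not be the midpoint (the two endpoints of $s_i$ are generally at unequal distances from $o$), we only get $\ell(g_i) \geq 2d_i$, which is the inequality the paper records. This is harmless for the direction you want, but worth fixing.

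The genuine gap is the optimization itself. You correctly observe that each term $\frac{1}{1+e^{2d_i}}$ can individually be near $\frac{1}{2}$ and that term-by-term estimates fail; you then say ``the resolution is likely a convexity or rearrangement argument'' and that the constraint should be some combination of the cycle condition and Gauss--Bonnet. This is where the argument has to be produced, not anticipated. The Gauss--Bonnet bound $\sum\gamma_i < (2m-2)\pi$ is far too weak to be of use; what the paper exploits is the much stronger consequence of the cycle condition, namely $\sum_{i=1}^m \gamma_i = 2\pi/k$, which (for $k\geq 2$) forces at most one obtuse angle and, after passing to $z_i := \tanh d_i$, reduces the acute case to showing $\sum_{i=1}^m z_i > m-1$ subject to $\sum_{i=1}^m \arccos(z_i z_{i+1}) \leq \pi$. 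Neither convexity nor a naive rearrangement applies (the paper explicitly notes the relevant function is neither convex nor concave); the actual proof goes through a delicate Cauchy--Schwarz inequality (Lemma~\ref{T:sqrt}) bounding $\sum_i \sqrt{\Delta_i}$ for $\Delta_i = x_i + x_{i+1} - x_i x_{i+1}$, combined with a calculus lemma comparing $\arccos$ with $\sqrt{\,\cdot\,}$. Your proposal does not supply any of this, and the extremal configuration you conjecture (regular polygon, or several sides coinciding) is also not the right one: equality in the optimization problem occurs when exactly one $z_i=0$ and the rest equal $1$, a degenerate limit where one side passes through $o$.

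Finally, the proposal says nothing about obtuse angles. When some $\gamma_i > \pi/2$, the reduction $\cos\alpha_i \leq z_i z_{i+1}$ breaks. The paper handles this with a separate geometric argument (neutralizing pairs, the pentagon Lemma~\ref{L:pentagon}, and, for the $k=1$ case where $\sum\gamma_i = 4\pi$, a dual-polygon construction that re-chooses the center point to limit the number of obtuse angles). Without addressing this case the proof is incomplete even if the acute-angle optimization were in hand.
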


\begin{center}
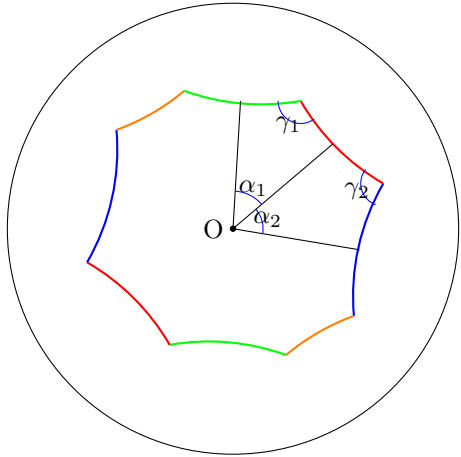
\begin{figure}[h!]
\begin{tikzpicture}
\draw (0, 0) circle (3cm) node (o){};
\draw [thick, color=red] (0.9, 1.7) arc (210 : 240 : 3 cm) node (a) {}; 
\draw [thick, color=blue] (a) arc (150 : 185 : 3 cm) node (b) {}; 
\draw [thick, color=orange] (b) arc (110 : 130 : 3 cm) node(c) {}; 
\draw [thick, color=green] (c) arc (70 : 100 : 3 cm) node (d) {};
\draw [thick, color = red] (d) arc (30 : 60 : 3 cm) node (e) {}; 
\draw [thick, color = blue] (e) arc (-30 : 5 : 3 cm) node (f) {}; 
\draw [thick, color = orange] (f) arc (-70 : -50 : 3 cm) node (g){}; 
\draw [thick, color = green ](g) arc (-110 : -80 : 3 cm) node (h){} ;
\filldraw[black] (0,0) circle (1pt) node[left] {O};
\draw (0,0) -- (0.1, 1.7) node (x){}; 
\draw (0,0) -- (1.33, 1.13) node (y){}; 
\draw (0,0) -- (1.68, -0.28) node (z){}; 

\draw pic["$\gamma_1$", draw=blue, -, angle eccentricity=1.2, angle radius=0.3 cm]
    {angle=x--h--y};
    \draw pic["$\gamma_2$", draw=blue, -, angle eccentricity=1.2, angle radius=0.3 cm]
    {angle=y--a--z};

\draw pic["$\alpha_1$", draw=blue, -, angle eccentricity=1.2, angle radius=0.5 cm]
    {angle=y--o--x};
\draw pic["$\alpha_2$", draw=blue, -, angle eccentricity=1.2, angle radius=0.4 cm]
    {angle=z--o--y};
    
\end{tikzpicture}
\caption{Angles at the center and at the vertices of a symmetric hyperbolic octagon.}
\end{figure}
\end{center}

\noindent \textbf{Remarks.} 



The inequality \eqref{E:McS3} has the same form as the main inequality in \cite{ACCS} and \cite{CS} for free Kleinian groups; 
 more recently, a stronger version for free Fuchsian groups has been obtained in \cite{He}, while generalizations in variable curvature (and any dimension) are due to \cite{Ho}, \cite{BM}.

Equation \eqref{E:McS3} is also reminiscent of McShane's identity \cite{McShane}, where one obtains the equality by taking the infinite sum over all group elements
of a punctured torus group. Our inequality, however, does not follow from any of them; in fact, it is in a way stronger than these, as a cocompact surface group can be deformed to a finite covolume group and then to a Schottky (hence free) group by increasing the translation lengths of the generators.

It is interesting to point out that the above inequalities have an interpretation in terms of hitting measures of stochastic processes (see e.g. \cite{LT}). 
Here, we go along the opposite route: we prove the geometric inequality \eqref{E:McS3} and then we use it to conclude properties 
about the hitting measure.

Finally, there are generating sets of $G$ for which \eqref{E:McS3} fails. Indeed, the mechanism behind the inequality is that, 
since all curves corresponding to $(g_i)_{i = 1}^{m}$ intersect each other, by the collar lemma,  at most one of them can be short. 
In general, on a surface of genus $g$ one can choose a configuration of $3 g - 3$ short curves, and construct a Dirichlet domain
for which the corresponding side pairing does not satisfy \eqref{E:McS3}.

\medskip
\noindent \textbf{Proof.} 
The proof of this inequality will take up most of the paper, until Section \ref{S:obtuse}.
To begin with, let us note that a way to parameterize the space of all symmetric hyperbolic polygons is to write, by \cite[Example 2.2.7]{Buser},
\begin{equation} \label{E:cosh}
\cos(\gamma_i) = - \cosh(a_i) \cosh(a_{i+1}) \cos(\alpha_i) + \sinh(a_i) \sinh(a_{i+1})
\end{equation}
with $i = 1, \dots, m$, where $(a_i)$ are the distances between the base point and the $i$th side, $(\alpha_i)$ are the angles at the origin and $(\gamma_i)$ are the angles at the vertices. 
Since $\ell(g_i) \geq 2 a_i$, it is enough to show
$$\sum_{i = 1}^m \frac{1}{1 + e^{2 a_i}} < \frac{1}{2}$$
under the constraints $\sum_{i=1}^m \alpha_i = \pi$ and $\sum_{i=1}^m \gamma_i = \pi$.

\medskip
The fundamental geometric idea in our approach to Theorem \ref{T:ineq-mcs} is that two intersecting curves cannot be both short, as a consequence of the 
\emph{collar lemma} \cite{Buser2}. For instance, we get: 

\begin{lemma}
Suppose that there exists $a_i$ such that $\sinh(a_i) \leq \frac{2 (m-1)}{m (m-2) }$.
Then the hitting measure is singular.
\end{lemma}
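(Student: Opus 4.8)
We plan to deduce the statement from the singularity criterion of Theorem~\ref{T:criterion}: it suffices to prove the geometric inequality $\sum_{g\in S}\frac{1}{1+e^{\ell(g)}}<1$. Since opposite sides of $P$ are paired by mutually inverse translations and $\ell(g_i)\ge 2a_i$, this reduces to $\sum_{i=1}^m\frac{1}{1+e^{2a_i}}<\frac12$. Writing $\frac{1}{1+e^{2a}}=\frac12-\frac12\tanh a$ and letting $j$ be an index with $\sinh a_j\le\frac{2(m-1)}{m(m-2)}$, the goal becomes
\[
\sum_{i\ne j}\frac{1}{1+e^{2a_i}}\ <\ \tfrac12\tanh a_j ;
\]
in other words, one must show that a side close enough to the base point pushes every other side away from $o$, by more than is lost in the (small) right-hand side.

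The engine is the collar lemma. Each $g_i$ is the product of the central half-turn about $o$ with the half-turn about the midpoint $M_i$ of side~$i$, so $o$ lies on the axis of $g_i$, one has $\ell(g_i)=2\,d(o,M_i)$, and on the quotient surface the associated closed geodesics $c_1,\dots,c_m$ are simple and all pass through the image of $o$; moreover any two of them cross, since inside the convex domain $P$ their axes are arcs joining the linked pairs of opposite sides $\{i,i+m\}$ and $\{k,k+m\}$. Hence every $c_i$ with $i\ne j$ traverses the embedded collar of $c_j$, whose half-width $w_j$ satisfies $\sinh w_j=1/\sinh(\ell(g_j)/2)$, so $\ell(g_i)\ge 2w_j$. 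Feeding $\ell(g_i)\ge 2w_j$ into the sum and using the identity $(\sqrt{1+x^2}+x)(\sqrt{1+x^2}-x)=1$, one bounds $\sum_{g\in S}\frac{1}{1+e^{\ell(g)}}$ by $\frac{2}{1+e^{\ell(g_j)}}+(m-1)\frac{2}{1+e^{2w_j}}$, and an elementary computation shows that this quantity is $<1$ precisely when $\sinh(\ell(g_j)/2)<\frac{2(m-1)}{m(m-2)}$; this is the shape of the hypothesis, and is exactly what pins the constant.

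The point that needs real work is passing from the hypothesis $\sinh a_j\le\frac{2(m-1)}{m(m-2)}$ to the inequality $\sinh(\ell(g_j)/2)=\sinh d(o,M_j)\le\frac{2(m-1)}{m(m-2)}$ required above, since a priori only $d(o,M_j)\ge a_j$. When side $j$ is itself short this is automatic, as $M_j$ then lies near the foot of the perpendicular from $o$; the remaining regime is that side $j$ is close to $o$ but long, hence subtends a large angle $\alpha_j$ at $o$ and $P$ becomes a long thin strip. In that case I would argue directly: $\sum_i\alpha_i=\pi$ forces the other sides to subtend tiny angles at $o$, and since the fixed value of $\operatorname{area}(P)$ (a positive multiple of $\pi$ determined by the cycle condition) can only be accommodated in a strip of width $2a_j$ if the strip has length of order $1/\sinh a_j$, the remaining sides — sitting at the far ends of the strip by convexity and the relations~\eqref{E:cosh} — lie at distance growing like $1/\sinh a_j$ from $o$, so that $\sum_{i\ne j}\frac{1}{1+e^{2a_i}}$ is far smaller than $\tfrac12\tanh a_j$. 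I expect this second case — controlling the ``caps'' of the strip and keeping the bounds uniform in $m$ — to be the main obstacle, the first case being the clean computation above.
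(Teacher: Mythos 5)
Your first computation runs along the same lines as the paper's proof: the same reduction via Theorem~\ref{T:criterion} to $\sum_{i=1}^m \tfrac{1}{1+e^{2a_i}}<\tfrac12$, the same substitution $\tfrac{2}{1+e^{2a}}=1-\tanh a$, and the same elementary threshold calculation (your bound $\tfrac{2}{1+e^{\ell(g_j)}}+(m-1)\tfrac{2}{1+e^{2w_j}}<1$ is, after substituting $s=\sinh(\ell(g_j)/2)$ and $\sinh w_j = 1/s$, exactly the paper's $\tfrac{s+m-1}{\sqrt{1+s^2}}>m-1$, which holds iff $s<\tfrac{2(m-1)}{m(m-2)}$).

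The divergence — and the reason your argument does not close — is the quantity to which the collar inequality is applied. You apply it to the closed geodesics $c_i$ of lengths $\ell(g_i)=2\,d(o,M_i)$, which produces a bound governed by $\sinh(\ell(g_j)/2)=\sinh(d(o,M_j))$. But the hypothesis is an upper bound on $\sinh(a_j)$ with $a_j=d(o,s_j)\le d(o,M_j)$, so the inequality runs the wrong way; you correctly flag this as the real obstacle. The paper sidesteps it entirely: it invokes the collar lemma (from Buser's \emph{The collar theorem and examples}) directly in the form $\sinh(a_i)\sinh(a_j)\ge 1$ for the apothems $a_i$ themselves, for all $i\ne j$. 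With that statement the proof is a three-liner: set $s:=\sinh(a_1)$; for $i\ne 1$ one gets $\sinh a_i\ge 1/s$, hence $\tanh a_i\ge 1/\sqrt{1+s^2}$, so $\sum_i\tanh a_i\ge (s+m-1)/\sqrt{1+s^2}>m-1$ precisely when $s<\tfrac{2(m-1)}{m(m-2)}$, and then $\ell(g_i)\ge 2a_i$ feeds this into Theorem~\ref{T:criterion}. The second half of your proposal — the heuristic ``thin strip'' area estimate meant to pass from $a_j$ to $d(o,M_j)$ — is not carried out and, given that the threshold $\tfrac{2(m-1)}{m(m-2)}$ is exactly calibrated, a qualitative bound on the caps is unlikely to give a uniform-in-$m$ fix. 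The right move is to apply the collar-type inequality to the perpendicular distances $a_i$ rather than to the translation half-lengths $\ell(g_i)/2$; with that form in hand your own reduction then finishes the proof without the extra geometric step.
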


\begin{proof}
From the collar lemma \cite{Buser2} we have 
$$\sinh(a_i) \sinh(a_j) \geq 1$$
for all $i \neq j$. 
Recall that 
$$\frac{2}{1 + e^{2a}} = 1 - \tanh(a)$$
hence, if we set $s := \sinh(a_1)$, we obtain for $i  \neq 1$ that $\sinh(a_i) \geq \frac{1}{s}$  thus 
$$\tanh(a_i) = \frac{\sinh(a_i)}{\sqrt{1 + \sinh(a_i)^2} } \geq \frac{1}{\sqrt{1 +  s^2}}$$
hence
$$\sum_{i = 1}^m \tanh(a_i) \geq \frac{s}{\sqrt{1 + s^2}} + \frac{m-1}{\sqrt{1 + s^2}} > m -1 $$
if and only if $s < \frac{2 (m-1)}{m (m-2) }$.
\end{proof}

\medskip
To actually prove Theorem \ref{T:ineq-mcs}, however, we need an improvement on the previous estimate. 
Let us rewrite equation \eqref{E:cosh} above as 
$$\cos(\alpha_i)  = \tanh(a_i) \tanh(a_{i+1}) - \frac{\cos(\gamma_i)}{\cosh(a_i) \cosh(a_{i+1})}$$
and, recalling that
$$\tanh^2(x) + \frac{1}{\cosh^2(x)} = 1$$
we obtain, by setting $z_i = \tanh(a_i)$, 
\begin{equation} \label{E:z}
\cos(\alpha_i)  = z_i z_{i+1} - \cos(\gamma_i) \sqrt{1 - z_i^2} \sqrt{1 - z_{i+1}^2}
\end{equation}
with $0 \leq z_i \leq 1$. Finally, we want to show
$$\sum_{i = 1}^m \frac{1}{1 + e^{2 a_i}} = \sum_{i = 1}^m \frac{1 - z_i}{2} \overset{?}{<} \frac{1}{2},$$
which is equivalent to 
\begin{equation} \label{E:sum}
\sum_{i = 1}^m z_i \overset{?}{>} m - 1.
\end{equation}
Now, let us first assume that $\gamma_i \leq \pi/2$ for all $1 \leq i \leq m$. 
Then \eqref{E:z} yields
$$\cos(\alpha_i)  \leq z_i z_{i+1}$$
hence the constraint becomes 
\begin{equation} \label{E:arccos}
\sum_{i = 1}^m \arccos(z_i z_{i+1}) \leq \pi.
\end{equation}
Note that $z_1 \to 0$ implies $\cos \alpha_1 \leq z_1 z_2 \to 0$ thus $\alpha_1 \to \frac{\pi}{2}$ and $\cos \alpha_m \leq z_m z_1 \to 0$ thus $\alpha_m \to \frac{\pi}{2}$, hence also $\alpha_2, \alpha_3, \dots, \alpha_{m-1} \to 0$, which implies $z_2, z_3, \dots, z_m \to 1$.

\section{An optimization problem}

By the above discussion, we reduced the proof of Theorem \ref{T:ineq-mcs} (at least in the case all angles of $P$ are acute)
to the following optimization problem. 

\begin{theorem} \label{T:ineq}
Let $m \geq 3$ and $0 \leq x_i \leq 1$ with $\sum_{i=1}^m x_i = m - 1$.
Then 
$$\sum_{i = 1}^m  \arccos(x_i x_{i+1}) \geq \pi.$$
Moreover, equality holds if and only if there exists an index $i$ such that $x_i = 0$ and $x_j = 1$ for all $j \neq i$.
\end{theorem}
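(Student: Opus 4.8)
The plan is to treat this as a constrained optimization problem on the cube $[0,1]^m$ with the single linear constraint $\sum_i x_i = m-1$, and to show that the minimum of $f(x) := \sum_{i=1}^m \arccos(x_i x_{i+1})$ (indices cyclic) is exactly $\pi$, attained only at the stated vertices. First I would record the boundary case: if some $x_i = 0$, say $x_m = 0$, then the constraint forces $x_j = 1$ for all $j < m$ (since $m-1$ ones is the only way to reach the sum with each $x_j \le 1$), so two of the terms $\arccos(x_{m-1}x_m)$ and $\arccos(x_m x_1)$ equal $\arccos(0) = \pi/2$ while all others equal $\arccos(1) = 0$, giving $f = \pi$ exactly. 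So the value $\pi$ is achieved, and it remains to show it is the minimum and that these are the only minimizers.

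Next I would argue that the minimum is attained in the interior of each edge-constraint region, or rather reduce to an interior critical point analysis via a perturbation/exchange argument. The key structural observation is that $\arccos(uv)$ is, for fixed $v \in [0,1]$, a convex decreasing function of $u$ on $[0,1]$ — indeed $\frac{d}{du}\arccos(uv) = -v/\sqrt{1-u^2v^2}$ is increasing in $u$. This suggests that, along the constraint hyperplane, $f$ is "as small as possible" when the mass is concentrated: pushing two coordinates $x_i, x_j$ (that do not interact directly in a common term, which is possible once $m \ge 3$) toward the endpoints $\{0,1\}$ while keeping their sum fixed does not increase $f$. I would make this precise by checking that for a pair of non-adjacent indices, the function $t \mapsto f$ restricted to the segment $x_i = c+t$, $x_j = c'-t$ is concave (sum of the relevant $\arccos$ terms as functions of $t$), so its minimum over an interval is at an endpoint — at which point one coordinate has hit $0$ or $1$. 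Iterating, we drive all but possibly one coordinate to $\{0,1\}$; combined with the constraint $\sum x_i = m-1$ this forces exactly one coordinate to be $0$ and the rest to be $1$, recovering the equality case.

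The main obstacle I anticipate is the adjacency bookkeeping in the exchange step: the terms of $f$ couple consecutive indices, so a single coordinate $x_i$ appears in two terms, $\arccos(x_{i-1}x_i)$ and $\arccos(x_i x_{i+1})$, and when I move a pair $(x_i, x_j)$ I must ensure $j \notin \{i-1, i+1\}$ so that the four affected terms split cleanly into two independent pairs; for $m = 3$ this is delicate since every two indices are adjacent, so $m=3$ likely needs to be handled by a direct argument (e.g. Lagrange multipliers on $\arccos(x_1x_2)+\arccos(x_2x_3)+\arccos(x_3x_1)$, showing the only interior critical point gives a value $> \pi$, or a monotonicity-in-$x_2$ computation after eliminating $x_1 + x_3 = 2 - x_2$). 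An alternative, possibly cleaner, route that avoids the exchange argument entirely is to use the trigonometric identity: setting $\alpha_i := \arccos(x_i x_{i+1})$, one wants $\sum \alpha_i \ge \pi$; this is equivalent to saying the angles $\alpha_i$ cannot be realized as the central angles of a configuration summing to less than $\pi$, which ties directly back to equation \eqref{E:z} and the geometry of the polygon. I would try the exchange/concavity argument first as the most self-contained, falling back to Lagrange multipliers (checking the Hessian is not positive definite at interior critical points, so they are not minima) if the adjacency combinatorics becomes unwieldy. Finally, I would verify the uniqueness claim by noting that strict concavity in the exchange step makes every interior endpoint-reduction strict, so no minimizer can have two coordinates strictly inside $(0,1)$.
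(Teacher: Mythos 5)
Your approach is genuinely different from the paper's, and the high-level idea (concavity along exchange directions pushes minimizers to the boundary of the polytope) is reasonable, but as written it has a real gap in the reduction. The concavity of $t \mapsto f$ along a segment $x_i = c+t$, $x_j = c'-t$ with $i,j$ non-adjacent only lets you conclude that a minimizer can be chosen so that \emph{no two interior coordinates are non-adjacent} in the cycle $C_m$. The set of interior indices must therefore be a clique in $C_m$. For $m=3$, that clique can be all of $\{1,2,3\}$ (which you flag). But for every $m \geq 4$, the maximum clique has size $2$, not $1$: you can get stuck at a point with exactly two \emph{adjacent} interior coordinates, say $x_1, x_2 \in (0,1)$ with $x_1 + x_2 = 1$ and $x_3 = \cdots = x_m = 1$. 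At such a point every admissible exchange direction is one-sided (it pushes some $x_k$ past $1$), so concavity gives no further reduction. Thus the claim ``iterating, we drive all but possibly one coordinate to $\{0,1\}$'' is false, and the case of two adjacent interior coordinates must be handled by a separate one-variable computation — showing $\arccos(s(1-s)) + \arccos(s) + \arccos(1-s) \geq \pi$ for $s \in [0,1]$ with equality only at $s \in \{0,1\}$ — which the proposal does not supply. (This inequality is true, and reduces after taking cosines to $3s^2 - 3s + 2 \geq 0$, but note this one-variable function is \emph{not} concave: it has an interior local minimum at $s = 1/2$ of value $\arccos(1/4) + 2\pi/3 > \pi$, so the concavity heuristic does not resolve it either.) There is also a sign error in your ``key structural observation'': for fixed $v \in (0,1]$, $\arccos(uv)$ is \emph{concave} decreasing in $u$, not convex — the derivative $-v/\sqrt{1-u^2v^2}$ is decreasing, not increasing. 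You silently switch to the correct statement (concavity) when treating the restriction to the segment, so the argument you actually run is right, but the stated justification is wrong.

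For comparison, the paper avoids the case analysis entirely. After the substitution $x_i \mapsto 1 - x_i$ (so the constraint becomes $\sum x_i = 1$) and setting $\Delta_i = x_i + x_{i+1} - x_i x_{i+1}$, it proves the pointwise lower bound $\tfrac{2}{\pi}\arccos(1-\Delta) \geq \tfrac{2}{3}\sqrt{\Delta} + \tfrac{1}{3}\Delta$ (Lemma~\ref{T:calc}), establishes the algebraic inequality $\sum_i \sqrt{\Delta_i} \geq \sqrt{4 + 3\sum_i x_i x_{i+1}}$ by Cauchy--Schwarz (Lemma~\ref{T:sqrt}, which does separate out $m = 3, 4$), and then chains the two together. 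That route is less conceptual about the polytope geometry but is uniform over all configurations, yields the equality case for free from the equality conditions of Lemma~\ref{T:calc}, and sidesteps the adjacency bookkeeping that your exchange argument has to confront.
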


\begin{center}
\begin{figure}
\includegraphics[width = 0.6 \textwidth]{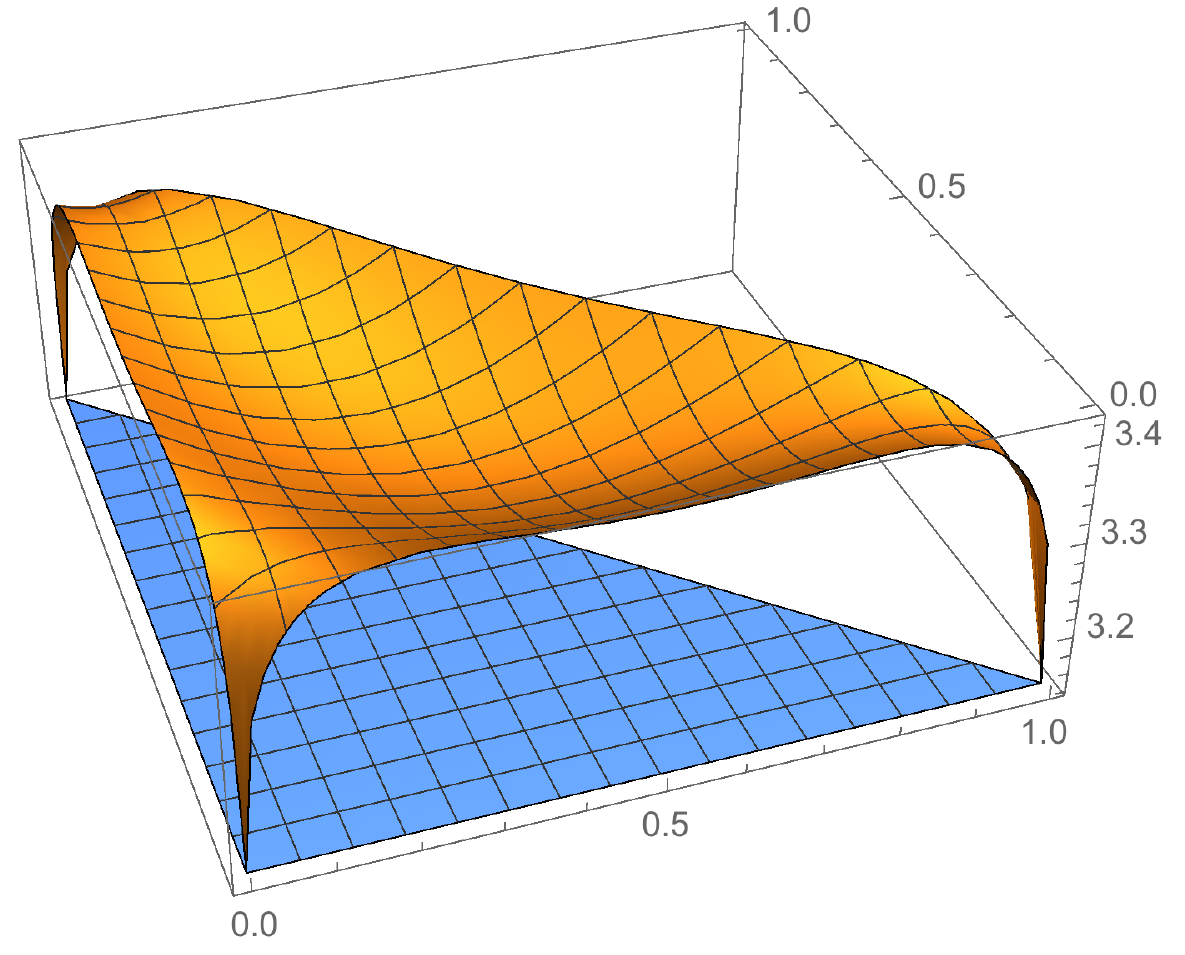}
\caption{The graph of $f(x) := \sum_{i = 1}^3 \arccos((1-x_i)(1-x_{i+1}))$ subject to the constraint $\sum_{i = 1}^3 x_i = 1$, compared 
with the constant function at height $\pi$.The lack of convexity (or concavity) of $f$ makes the proof of Theorem \ref{T:ineq} trickier.
}
\end{figure}
\end{center}

In the statement of Theorem \ref{T:ineq} and elsewhere from now on, all indices $i$ are meant modulo $m$.
The next is the main technical lemma. 

\begin{lemma} \label{T:sqrt}
Let $m \geq 3$ and $0 \leq x_i \leq 1$ with $\sum_{i=1}^m x_i = 1$.
Then
$$\sum_{i = 1}^m \sqrt{x_i + x_{i+1} - x_i x_{i+1}}  \geq  \sqrt{ 4 + 3 \sum_{i =1}^m x_i x_{i+1}}.$$
\end{lemma}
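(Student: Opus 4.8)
The plan is to reduce the inequality to an extremal configuration of the constraint simplex $\{x_i \geq 0, \sum x_i = 1\}$. The key observation is that both sides of the inequality depend on the $x_i$ only through the cyclic "edge" quantities; write $p := \sum_{i=1}^m x_i x_{i+1}$ and note that $x_i + x_{i+1} - x_i x_{i+1} = 1 - (1-x_i)(1-x_{i+1})$. So I want to show $\sum_i \sqrt{1 - (1-x_i)(1-x_{i+1})} \geq \sqrt{4 + 3p}$. First I would handle the boundary case: if some $x_i = 0$, the term $\sqrt{x_{i-1} + x_i - x_{i-1}x_i} = \sqrt{x_{i-1}}$ and $\sqrt{x_i + x_{i+1}} = \sqrt{x_{i+1}}$ appear, the cyclic structure effectively "opens up" at index $i$, and $p$ loses the contributions of edges $(i-1,i)$ and $(i,i+1)$; one can then try to induct on $m$ (passing to $m-1$ variables) or directly verify the opened-up chain inequality. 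The genuine extremum, guided by Theorem \ref{T:ineq}, should be $x_i = 1$ for one index, all others $0$: then the left side is $1 + 1 = 2$ (two edges incident to the nonzero variable contribute $\sqrt{1}$ each, the rest $0$) and $p = 0$, so the right side is $\sqrt{4} = 2$, with equality — consistent.

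Next, for the interior analysis, I would fix the value of $p$ and minimize $L(x) := \sum_i \sqrt{x_i + x_{i+1} - x_i x_{i+1}}$ subject to $\sum x_i = 1$ and $\sum x_i x_{i+1} = p$, using Lagrange multipliers. The stationarity condition for $x_j$ reads $\tfrac{1}{2}\big[(1-x_{j+1})/\sqrt{x_j+x_{j+1}-x_jx_{j+1}} + (1-x_{j-1})/\sqrt{x_{j-1}+x_j-x_{j-1}x_j}\big] = \lambda + \mu(x_{j-1}+x_{j+1})$. Rather than solve this system in generality, I would argue that at an interior minimum the configuration must be "low complexity" — e.g. at most a bounded number of the $x_i$ are nonzero, or the nonzero ones form a single cyclic arc — by a local perturbation / moving-mass argument: shifting mass between two non-adjacent nonzero coordinates changes $L$ and $p$ in a controlled way, and one shows the map can only decrease $L$ relative to $\sqrt{4+3p}$ by pushing toward the boundary. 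Then it suffices to prove the inequality when only two or three consecutive $x_i$ are nonzero, which becomes a one- or two-variable calculus problem: square both sides (legitimate since both are nonnegative) to clear the outer radical, isolate the remaining cross-radicals, and reduce to a polynomial inequality checkable by elementary estimates.

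An alternative, possibly cleaner route I would try first: prove the pointwise/termwise-aggregated bound by a clever application of Cauchy–Schwarz or the power-mean inequality. Since $\big(\sum_i \sqrt{a_i}\big)^2 = \sum_i a_i + 2\sum_{i<j}\sqrt{a_i a_j}$ with $a_i := x_i + x_{i+1} - x_i x_{i+1}$, and $\sum_i a_i = 2\sum x_i - \sum x_i x_{i+1} = 2 - p$, the desired inequality $\big(\sum\sqrt{a_i}\big)^2 \geq 4 + 3p$ is equivalent to $2\sum_{i<j}\sqrt{a_i a_j} \geq 2 + 4p$. One then needs a good lower bound on $\sum_{i<j}\sqrt{a_i a_j}$; bounding $\sqrt{a_i a_j} \geq$ (a suitable linear/bilinear expression in the $x$'s) and summing, using $\sum x_i = 1$, might close the gap, at least after separating the adjacent pairs $(i,i+1)$ (where $a_i, a_{i+1}$ share the variable $x_{i+1}$) from the non-adjacent ones.

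The main obstacle I anticipate is exactly the lack of convexity flagged in the paragraph before Theorem \ref{T:ineq}: the function $L$ is neither convex nor concave on the simplex, so the minimum is not forced to the boundary by a soft argument, and Lagrange multipliers produce a messy coupled system. The real work will be the rigidity/reduction step — showing that any interior critical point either fails to be a minimum or can be deformed to the boundary without increasing $L - \sqrt{4+3p}$ — and then the careful (but elementary) verification of the low-complexity base cases after squaring. I would expect to spend most of the effort controlling the cross terms $\sqrt{a_i a_j}$ for adjacent indices, since those are the ones that couple the variables and obstruct a naive separation-of-variables argument.
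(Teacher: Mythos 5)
Your ``alternative, possibly cleaner route'' is in fact the paper's route, and you identify its skeleton correctly: square both sides, use $\sum_i a_i = 2 - p$ to reduce the claim to $2\sum_{i<j}\sqrt{a_i a_j} \geq 2 + 4p$, and then try to lower-bound the cross terms $\sqrt{a_i a_j}$. That is exactly right, and it completely sidesteps the non-convexity obstacle that makes your primary (Lagrange/perturbation) plan hard to carry out rigorously.

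What is missing is the actual content of the proof: the specific lower bounds on the cross terms. You write that a suitable bound ``might close the gap,'' but you do not produce one, and this is where the lemma really lives. The paper supplies two estimates. First, since $\Delta_i := x_i + x_{i+1} - x_i x_{i+1} \geq \max\{x_i, x_{i+1}\}$, adjacent products satisfy $\sqrt{\Delta_i}\sqrt{\Delta_{i+1}} \geq x_{i+1}$, and summing these over the cycle already produces the additive ``$1$'' via $\sum_i x_i = 1$. Second --- and this is the trick you did not find --- one homogenizes using the constraint: since $x_{i+1}+x_{i+2} \leq 1$, one has
\[
\Delta_i \;\geq\; (x_i + x_{i+1})(x_{i+1}+x_{i+2}) - x_i x_{i+1} \;\geq\; x_{i+1}^2 + x_{i+1}x_{i+2},
\]
and symmetrically $\Delta_{i+2} \geq x_{i+2}^2 + x_{i+1}x_{i+2}$, so Cauchy--Schwarz gives $\sqrt{\Delta_i}\sqrt{\Delta_{i+2}} \geq 2x_{i+1}x_{i+2}$ for distance-two pairs. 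Summing these yields the ``$+\,4p$'' part. You also underestimate the bookkeeping: the set of pairs $\{i<j\}$ is not simply the union of adjacent and distance-two pairs --- for $m\geq 5$ there are additional pairs one discards (which is fine, as they are nonnegative), but $m=3$ and $m=4$ degenerate (distance-two equals distance-one when $m=3$; opposite pairs occur only once when $m=4$) and each needs its own short Cauchy--Schwarz argument. So the framework you settled on is sound and matches the paper, but the proposal stops just before the two estimates that make it work, and does not anticipate the necessary case split in $m$.

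Your primary plan (fix $p$, minimize by Lagrange multipliers, then push to the boundary by a moving-mass argument) is not what the paper does, and as you yourself anticipate, the non-convexity means there is no soft reason the minimizer sits on a low-dimensional face. It could perhaps be made to work, but it would be substantially harder than the direct algebraic argument, and as written it is only a plan, not a proof.
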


\begin{proof}
Set $\Delta_i := x_i + x_{i+1} - x_i x_{i+1}$.
Note that 
$$\Delta_i \geq \max\{ x_i, x_{i+1}\}$$
hence 
\begin{equation} \label{E:delta-bound}
\sqrt{\Delta_i}\sqrt{\Delta_{i+1}} \geq x_{i+1}.
\end{equation}
Moreover, since $m \geq 2$, we have $x_{i+1} + x_{i+2} \leq \sum_{i =1}^m x_i = 1$, hence if we multiply by $(x_{i+1} + x_{i+2})$, we obtain 
\begin{align*}
\Delta_i & = x_i + x_{i+1} - x_i x_{i+1} \\
& \geq  (x_i + x_{i+1}) (x_{i+1} + x_{i+2}) - x_i x_{i+1} \\
& \geq x_{i+1}^2 + x_{i+1} x_{i+2}.
\end{align*}
Similary, we obtain 
\begin{align*}
\Delta_{i+2}  & = x_{i+2} + x_{i+3} - x_{i+2} x_{i+3} \\
&  \geq (x_{i+2} + x_{i+3})(x_{i+1} + x_{i+2}) - x_{i+2} x_{i+3} \\
& \geq x_{i+2}^2 + x_{i+1} x_{i+2}.
\end{align*}
Thus, Cauchy-Schwarz yields
\begin{equation} \label{E:CS}
\sqrt{\Delta_i}\sqrt{\Delta_{i+2}} \geq \sqrt{ x_{i+1}^2 + x_{i+1} x_{i+2}} \sqrt{ x_{i+2}^2 + x_{i+1} x_{i+2}} \geq 2 x_{i+1} x_{i+2}.
\end{equation}
By squaring both sides, our desired inequality is equivalent to 
$$\sum_{i =1}^m \Delta_i + 2 \sum_{1 \leq i < j \leq m} \sqrt{\Delta_i} \sqrt{\Delta_j} \geq 4 + 3 \sum_{i =1}^m x_i x_{i+1},$$
thus, using $\sum_{i =1}^m \Delta_i = 2 - \sum_{i = 1}^m x_i x_{i+1}$, it is enough to prove 
\begin{equation} \label{E:equiv}
\sum_{1 \leq i < j \leq m} \sqrt{\Delta_i} \sqrt{\Delta_j} \geq 1 + 2 \sum_{i = 1}^m x_i x_{i+1}.
\end{equation}
Now, note that 
$$\sum_{1 \leq i < j \leq m} \sqrt{\Delta_i} \sqrt{\Delta_j}  = \sum_{i = 1}^m \sqrt{\Delta_i} \sqrt{\Delta_{i+1}} + M$$
with 
\begin{align}
M & = 0 & \textup{if } m = 3\phantom{.} \\
M & = \sum_{i = 1}^2 \sqrt{\Delta_i} \sqrt{\Delta_{i+2}} & \textup{if }m = 4\phantom{.} \\
\label{E:last}
M & \geq \sum_{i=1}^m \sqrt{\Delta_i} \sqrt{\Delta_{i+2}} & \textup{if }m \geq 5.
\end{align}
Thus, for $m \geq 5$ we have, using \eqref{E:last}, \eqref{E:delta-bound} and \eqref{E:CS}, 
\begin{align*}
\sum_{1 \leq i < j \leq m} \sqrt{\Delta_i} \sqrt{\Delta_j} & \geq
\sum_{i=1}^m \sqrt{\Delta_i} \sqrt{\Delta_{i+1}} +  \sum_{i=1}^m \sqrt{\Delta_i} \sqrt{\Delta_{i+2}} \\
& 
\geq 
\sum_{i=1}^m x_{i+1} +  2 \sum_{i=1}^m x_{i+1} x_{i+2} \\
& \geq 1 +  2\sum_{i=1}^m x_{i+1} x_{i+2}
\end{align*}
which yields \eqref{E:equiv}, hence completes our proof.
The cases $m = 3$ and $m = 4$ need to be dealt with separately.
If $m = 3$, we obtain, by multiplying by $\sum_{i = 1}^3 x_i = 1$, 
$$\Delta_i = x_i^2 + x_{i+1}^2 + \sum_{i = 1}^3  x_i x_{i+1}$$
so by Cauchy-Schwarz we get 
$$\sqrt{\Delta_i} \sqrt{\Delta_{i+1}} \geq x_{i+1}^2 + x_i x_{i+2} + \sum_{i = 1}^3 x_i x_{i+1}$$
hence 
\begin{align*}
\sum_{i=1}^3 \sqrt{\Delta_i} \sqrt{\Delta_{i+1}}   & \geq \sum_{i=1}^3 x_i^2 + 4 \sum_{i=1}^3 x_i x_{i+1} \\
&  = \left( \sum_{i=1}^3 x_i \right)^2 + 2 \sum_{i = 1}^3 x_i x_{i+1} \\
& = 1 + 2 \sum_{i = 1}^3 x_i x_{i+1}
\end{align*}
which yields \eqref{E:equiv}, as desired. 
Finally, if $m = 4$, then we note 
$$\sum_{1 \leq i < j \leq 4} \sqrt{ \Delta_i } \sqrt{\Delta_j} = \sum_{i = 1}^4 \sqrt{\Delta_i} \sqrt{\Delta_{i+1}} + \sum_{i = 1}^2 \sqrt{\Delta_i} \sqrt{\Delta_{i+2}}$$
and, again by Cauchy-Schwarz, 
$$\sqrt{\Delta_1} \sqrt{\Delta_3} \geq \sqrt{x_1^ 2 + x_2^2 + x_1 x_4 + x_2 x_3} \sqrt{x_3^ 2 + x_4^2 + x_1 x_4 + x_2 x_3}  \geq 2 x_1 x_4 + 2 x_2 x_3$$
and similarly 
$$\sqrt{\Delta_2} \sqrt{\Delta_4} \geq  2 x_1 x_2 + 2 x_3 x_4$$
thus, using \eqref{E:delta-bound}, 
$$\sum_{1 \leq i < j \leq 4} \sqrt{ \Delta_i } \sqrt{\Delta_j} \geq \sum_{i = 1}^4 x_i + 2 \sum_{i = 1}^4 x_i x_{i+1}  = 1  + 2 \sum_{i = 1}^4 x_i x_{i+1}$$
which is again \eqref{E:equiv}. This completes the proof.
\end{proof}

\begin{lemma} \label{T:calc}
For $0 \leq x \leq 1$ we have the inequalities:
\begin{enumerate}
\item
$$\frac{2}{\pi} \arccos(1-x) \geq \frac{2}{3} \sqrt{x} + \frac{1}{3} x$$
with equality if and only if $x = 0$ or $x= 1$;
\item
$$\frac{2}{3} \sqrt{4 + 3 x} + \frac{2-x}{3}  \geq 2 $$
with equality if and only if $x = 0$.
\end{enumerate}
\end{lemma}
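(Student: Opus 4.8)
Both inequalities are single-variable calculus statements on the interval $[0,1]$, so the plan is to prove each by reducing it to a manifestly nonnegative expression, ideally one that factors. For part (1), I would substitute $x = 1 - \cos\theta$ with $\theta \in [0, \pi/2]$, so that $\arccos(1-x) = \theta$ and $\sqrt{x} = \sqrt{1-\cos\theta} = \sqrt{2}\,|\sin(\theta/2)|$. The claimed inequality becomes
$$\frac{2\theta}{\pi} \geq \frac{2\sqrt{2}}{3}\sin(\theta/2) + \frac{1-\cos\theta}{3},$$
and using $1 - \cos\theta = 2\sin^2(\theta/2)$ this is a trigonometric inequality in $\theta/2 \in [0,\pi/4]$. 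Setting $t = \theta/2$, one wants $\frac{4t}{\pi} \geq \frac{2\sqrt2}{3}\sin t + \frac{2}{3}\sin^2 t$ on $[0,\pi/4]$; the right side is convex-then-concave but bounded above by its value and derivative data at the endpoints, so I would check the inequality holds at $t = 0$ and $t = \pi/4$ (where both sides equal, respectively, $0$ and $1$) and then argue via the sign of the difference's derivative. Concretely, let $\phi(t)$ be the difference of the two sides; then $\phi(0) = \phi(\pi/4) = 0$, and $\phi''(t) = \frac{2\sqrt2}{3}\sin t + \frac{2}{3}(2\sin^2 t \cdot(-1)\cdots)$ — more cleanly, $\phi''$ has at most one sign change on the interval, so $\phi$ is concave then convex (or vice versa), and a function vanishing at both endpoints with a single inflection and the correct one-sided derivative at an endpoint is nonnegative throughout. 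I would pin down the sign by evaluating $\phi'(0)$: since $\frac{d}{dt}$ of the right side at $0$ is $\frac{2\sqrt2}{3} \approx 0.943 < \frac{4}{\pi} \approx 1.273$, we get $\phi'(0) > 0$, which together with $\phi(0)=\phi(\pi/4)=0$ and the single inflection forces $\phi \geq 0$ on $[0,\pi/4]$, with equality only at the endpoints, i.e. $x \in \{0,1\}$.

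For part (2), the function $g(x) := \frac{2}{3}\sqrt{4+3x} + \frac{2-x}{3}$ on $[0,1]$ is easy: $g(0) = \frac{4}{3} + \frac{2}{3} = 2$, and I would show $g$ is increasing, so $g(x) \geq g(0) = 2$ with equality iff $x = 0$. Differentiating, $g'(x) = \frac{1}{\sqrt{4+3x}} - \frac{1}{3}$, which is $\geq 0$ precisely when $\sqrt{4+3x} \leq 3$, i.e. $4 + 3x \leq 9$, i.e. $x \leq 5/3$; since $x \leq 1 < 5/3$, indeed $g' \geq 0$ on $[0,1]$ with $g'(x) = 0$ only at $x = 5/3 \notin [0,1]$, hence $g$ is strictly increasing on $[0,1]$ and the conclusion follows. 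Alternatively, one can avoid calculus entirely: rearranging, $\frac{2}{3}\sqrt{4+3x} \geq 2 - \frac{2-x}{3} = \frac{4+x}{3}$, i.e. $2\sqrt{4+3x} \geq 4+x$, and since both sides are nonnegative for $x \in [0,1]$, squaring gives $4(4+3x) \geq (4+x)^2$, i.e. $16 + 12x \geq 16 + 8x + x^2$, i.e. $4x \geq x^2$, i.e. $x(4-x) \geq 0$, which holds on $[0,1]$ with equality iff $x = 0$. I would present this squaring argument since it is cleanest and makes the equality case transparent.

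The main obstacle is part (1): unlike part (2), it does not reduce to a polynomial inequality after a single squaring because of the mixture of $\arccos$, $\sqrt{x}$, and $x$. The trigonometric substitution converts it to a problem about $\frac{4t}{\pi}$ versus a combination of $\sin t$ and $\sin^2 t$, which is tractable but still requires a convexity/inflection argument rather than pure algebra. An alternative route is to set $u = \sqrt{x}$, so one needs $\frac{2}{\pi}\arccos(1-u^2) \geq \frac{2}{3}u + \frac{1}{3}u^2$ for $u \in [0,1]$; noting $\arccos(1-u^2) = 2\arcsin(u/\sqrt{2})$ may or may not simplify matters. Either way, I expect the cleanest writeup fixes the endpoint values, computes that the difference function has exactly one inflection point on the interval (by showing its second derivative changes sign exactly once — this is where the real work lies), and concludes by the endpoint-plus-one-inflection principle, reading off the equality cases $x \in \{0,1\}$ from the endpoint analysis.
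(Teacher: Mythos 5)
Your argument for part (2) is correct and in fact cleaner than the paper's: rearranging to $2\sqrt{4+3x} \geq 4+x$ and squaring gives $x(4-x) \geq 0$, which is immediate on $[0,1]$ with equality only at $x=0$. The paper instead checks $g(0)=2$ and $g'(x) = (4+3x)^{-1/2} - \tfrac13 > 0$ on $[0,1]$; both routes are fine, but your squaring makes the equality case transparent with no calculus.

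For part (1), the trigonometric reduction and the observation that $\phi''(t) = \tfrac{2\sqrt2}{3}\sin t - \tfrac43\cos 2t$ is increasing on $[0,\pi/4]$, hence has a single sign change, are correct, but the closing step has a genuine gap. The principle you invoke --- that a function vanishing at both endpoints, with a single inflection, and with $\phi'(0)>0$, must be nonnegative --- is false: $\sin(2\pi t)$ on $[0,1]$ vanishes at both endpoints, has positive derivative at $0$, has a single inflection at $t=\tfrac12$ (concave then convex), yet is strictly negative on $(\tfrac12,1)$. The missing datum is the sign of the derivative at the \emph{right} endpoint. In your setting you also need $\phi'(\pi/4) < 0$, and this does hold: $\phi'(\pi/4) = \tfrac{4}{\pi} - \tfrac{2\sqrt2}{3}\cos\tfrac\pi4 - \tfrac23\sin\tfrac\pi2 = \tfrac4\pi - \tfrac43 < 0$ since $\pi > 3$. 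With $\phi''$ increasing and crossing zero once, $\phi'$ is decreasing-then-increasing; combined with $\phi'(0)>0$ and $\phi'(\pi/4)<0$, it has exactly one zero, so $\phi$ rises from $0$ to a maximum and falls back to $0$, forcing $\phi\geq 0$ with equality only at the endpoints. The paper takes your ``alternative route'' $u=\sqrt{x}$, setting $f(u) := \tfrac2\pi\arccos(1-u^2) - \tfrac23 u - \tfrac13 u^2$ and arguing from $f(0)=f(1)=0$, $f(1/\sqrt2)>0$, and ``$f'$ has a unique zero in $[0,1]$''; conceptually it is the same repair, controlling critical points of $f$ rather than inflection points, with the positive interior value (respectively, the uniqueness of the critical point) supplying the extra information that the endpoint values and $\phi'(0)>0$ alone do not.
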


\begin{proof}
For the first inequality, let $f(x) := \frac{2}{\pi} \arccos(1-x^2) - \frac{2}{3} x - \frac{1}{3} x^2$.  One checks that $f(0) = f(1) = 0$ and $f(\frac{1}{\sqrt{2}}) = \frac{1}{2} - \frac{\sqrt{2}}{3} > 0$; moreover, $f'(x)$ has a unique zero in $[0, 1]$. Hence, $f(x) \geq 0$ for all $0 \leq x \leq 1$, which implies (1).

To prove (2), let $g(x) := \frac{2}{3} \sqrt{4 + 3 x} + \frac{2-x}{3}$. Then one checks $g(0) = 2$ and $g'(x) = \frac{1}{\sqrt{4 + 3 x}} - \frac{1}{3} > 0$
for $0 \leq x \leq 1$, which implies $g(x) \geq 2$ for all $0 \leq x \leq 1$.
\end{proof}

\begin{proof}[Proof of Theorem \ref{T:ineq}]
By replacing $x_i$ by $1-x_i$ and setting $f(x) := \frac{2}{\pi} \arccos(1-x)$, our claim 
is equivalent to 
$$\sum_{i = 1}^m f(x_i + x_{i+1} - x_i x_{i+1}) \geq 2$$
under the constraint $\sum_{i=1}^m x_i = 1$, 
with $m \geq 3$ and $0 \leq x_i \leq 1$.

Let us set $\Delta_i := x_i + x_{i+1} - x_i x_{i+1}$ and $\sigma := \sum_{i =1}^m x_i x_{i+1}$.
Observe that $2 \sigma \leq (\sum_{i=1}^m  x_i)^2 = 1$.
Then we have by Lemma \ref{T:calc}
\begin{align*}
\sum_{i = 1}^m f(\Delta_i) & \geq  \frac{2}{3} \sum_{i=1}^m  \sqrt{\Delta_i} + \frac{1}{3} \sum_{i=1}^m \Delta_i \\
\intertext{and using Lemma \ref{T:sqrt} and the fact $\sum_{i=1}^m \Delta_i = 2 - \sigma$, we obtain}
&\geq  \frac{2}{3} \sqrt{4 + 3 \sigma} + \frac{1}{3} (2- \sigma)  \geq 2
\end{align*}
where in the last step we apply Lemma \ref{T:calc} (2). This completes the proof of the inequality.
By Lemma \ref{T:calc} (1), equality implies that $\Delta_i = 0, 1$ for every $i$, which in turn implies that $x_i = 0, 1$ 
for all $i$. Since $\sum_{i = 1}^m x_i = 1$, this can only happen if $x_i = 1$ for exactly one index $i$.
\end{proof}

\section{The obtuse angle case} \label{S:obtuse}

The proof in the previous section works as long as all angles $\gamma_i$ are less or equal than $\pi/2$. If one of them is obtuse,  
we have a geometric argument to reduce ourselves to that case. 

\subsection{Neutralizing pairs}
We call a  \emph{neutralizing pair} for $P$ a pair $\{\gamma_i, \gamma_{i+1}\}$ of adjacent interior angles of $P$ with $\gamma_i + \gamma_{i+1} \leq \pi$. 
Whenever we have a neutralizing pair, we can apply the following lemma. 

\begin{lemma} \label{L:pentagon}
Let $ABCDE$ be a hyperbolic pentagon, with right angles $\widehat{B}$ and $\widehat{E}$, and suppose that 
$\widehat{C} < \pi/2$ and $\widehat{C} + \widehat{D} \leq \pi$. Let $P$ be the midpoint of $\overline{CD}$, and let $\widehat{F}$ be the foot of the orthogonal projection of $P$ to $\overline{BC}$. 
Let $\widehat{G}$ be the intersection of the lines $\overline{FP}$ and $\overline{ED}$. Then the angle $\delta = D \widehat{G} F$ satisfies $\delta \leq \pi/2$.
\end{lemma}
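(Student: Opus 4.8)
The plan is to work in the hyperbolic pentagon $ABCDE$ and track the angle $\delta$ as a function of the given data. First I would set up coordinates adapted to the right-angled quadrilateral structure: since $\widehat{B} = \widehat{E} = \pi/2$, the segment $\overline{BE}$ together with the sides $\overline{BC}$ and $\overline{ED}$ forms a Lambert-type configuration, and the foot $F$ of the perpendicular from $P$ to $\overline{BC}$, together with the point $G = \overline{FP} \cap \overline{ED}$, creates a quadrilateral $BFGE$ with three right angles (at $B$, $E$, and $F$), i.e.\ a Lambert quadrilateral. Hence $\delta = D\widehat{G}F$ is the supplement of the fourth angle $F\widehat{G}E$ of that Lambert quadrilateral, so proving $\delta \leq \pi/2$ is equivalent to proving that the fourth angle of $BFGE$ is at least $\pi/2$ — but a Lambert quadrilateral in $\mathbb{H}^2$ always has acute fourth angle, so this reformulation needs care: I must instead show $G$ lies on the correct side, i.e.\ that $\overline{ED}$, extended, meets the line $\overline{FP}$ on the far side of $P$, and express $\delta$ directly.

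The cleaner route is to compare the pentagon with the triangle $CPF$. In the right triangle $C\widehat{F}P$ (right angle at $F$) we have $\widehat{C} < \pi/2$ by hypothesis, so this triangle is genuine, and $\overline{FP} \perp \overline{BC}$. Now consider the quadrilateral $F P D$ together with the side $\overline{FD}$... the key relation is that in the triangle $PGD$, the angle at $P$ is $\pi/2$ minus the angle that $\overline{PD}$ makes with $\overline{CD}$ — but $P$ lies on $\overline{CD}$, so $\overline{PD}$ and $\overline{PC}$ are opposite rays, and the angle $G\widehat{P}D$ equals $\pi/2 - C\widehat{P}F$. Since $C\widehat{P}F = \pi/2 - \widehat{C}$ (angles in the right triangle $CFP$), we get $G\widehat{P}D = \widehat{C}$. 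Then in triangle $GPD$ the remaining angles satisfy $\delta + P\widehat{D}G < \pi$ (angle sum in a hyperbolic triangle is $< \pi$), and $P\widehat{D}G = \widehat{D}$ since $G \in \overline{ED}$ and $P \in \overline{CD}$, so $\delta < \pi - \widehat{D} - \widehat{C} + \widehat{C} = \pi - \widehat{D}$... I need to be more careful: the angle sum gives $\delta + \widehat{C} + P\widehat{D}G < \pi$, and using $\widehat{C} + \widehat{D} \leq \pi$ together with $P\widehat{D}G \geq \widehat{D} - \pi/2$ (or the appropriate comparison coming from $\widehat{E} = \pi/2$) should yield $\delta \leq \pi/2$.

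The main obstacle I anticipate is bookkeeping the configuration: verifying that $F$ actually lies on the segment $\overline{BC}$ (not its extension) and that $G$ lies on ray $\overline{FP}$ beyond $P$ and on segment $\overline{ED}$ appropriately, so that the triangle $GPD$ is nondegenerate and the claimed angle identities $G\widehat{P}D = \widehat{C}$ and $P\widehat{D}G = \widehat{D}$ (rather than their supplements) hold. This is where the hypotheses $\widehat{C} < \pi/2$ and $\widehat{C} + \widehat{D} \leq \pi$ — and the right angles at $B$ and $E$ — must be used decisively: $\widehat{C} < \pi/2$ guarantees $F$ is on the correct side, and the neutralizing condition $\widehat{C}+\widehat{D}\le\pi$ forces $G$ into position and simultaneously delivers the bound. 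Once the combinatorics of the picture is pinned down, the angle-sum inequality in the hyperbolic triangle $GPD$ finishes the argument in one line.
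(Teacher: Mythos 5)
Your approach has several genuine flaws that together make it unsalvageable in its current form, and you do not arrive at the key idea of the paper's proof.

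First, the identity $C\widehat{P}F = \pi/2 - \widehat{C}$ is a Euclidean fact; in a hyperbolic right triangle one only has $C\widehat{P}F < \pi/2 - \widehat{C}$, and moreover $G\widehat{P}D$ equals $C\widehat{P}F$ (they are vertical angles, with $C,P,D$ and $F,P,G$ each collinear), not its complement, so $G\widehat{P}D = \widehat{C}$ is doubly wrong. Second, you have the configuration backwards: since the segment $\overline{FP}$ lies on the interior side of line $CD$, its extension past $P$ leaves the pentagon, and so $G$ lies on line $ED$ \emph{beyond} $D$, not on the segment $\overline{ED}$; consequently $P\widehat{D}G = \pi - \widehat{D}$, not $\widehat{D}$. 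Third, and most importantly, even after these corrections the plan fails structurally: the angle sum in the hyperbolic triangle $GPD$ gives $\delta < \pi - G\widehat{P}D - P\widehat{D}G = \widehat{D} - C\widehat{P}F$, which can be arbitrarily close to $\pi$ (take $\widehat{D}$ close to $\pi$ and $P$ far from $C$), so the angle defect in $GPD$ alone cannot deliver the bound $\delta \leq \pi/2$. The hypothesis $\widehat{C} + \widehat{D} \leq \pi$ enters your argument only as a wave of the hand at the end, which is precisely the point where a real idea is needed.

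The missing idea is the paper's auxiliary point $F'$, the reflection of $F$ through $P$. By SAS (using that $P$ is the midpoint of $\overline{CD}$ and that vertical angles at $P$ agree), triangles $CFP$ and $DF'P$ are congruent, hence $D\widehat{F'}P = \pi/2$ and $P\widehat{D}F' = \widehat{C}$. The hypothesis $\widehat{C}+\widehat{D}\leq\pi$ then says exactly that $E\widehat{D}F' = \widehat{D} + \widehat{C} \leq \pi$, which is what forces $F'$ to lie on the segment $\overline{PG}$. Once $F'$ is known to lie between $P$ and $G$, the angle $D\widehat{F'}G = \pi - D\widehat{F'}P = \pi/2$ is a right angle of the triangle $DF'G$, and $\delta$ is one of its other angles, hence $\delta \leq \pi/2$. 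This is the step your proposal never reaches: you need an auxiliary right angle adjacent to $G$, not the angle defect of the raw triangle $GPD$.
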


\begin{proof}
Let $F'$ be the symmetric point to $F$ with respect to $P$. Then $CFP$ and $DPF'$ are equal triangles. 
Hence $E \widehat{D} F' = E \widehat{D} P + P \widehat{D} F' = E \widehat{D} C + B \widehat{C} D \leq \pi$, 
hence $F'$ lies on the segment $\overline{P G}$. Moreover, $D \widehat{F'} P = C \widehat{F} P = \pi/2$, 
hence $\delta = D \widehat{G} P \leq \pi/2$.
\end{proof}

\begin{center}
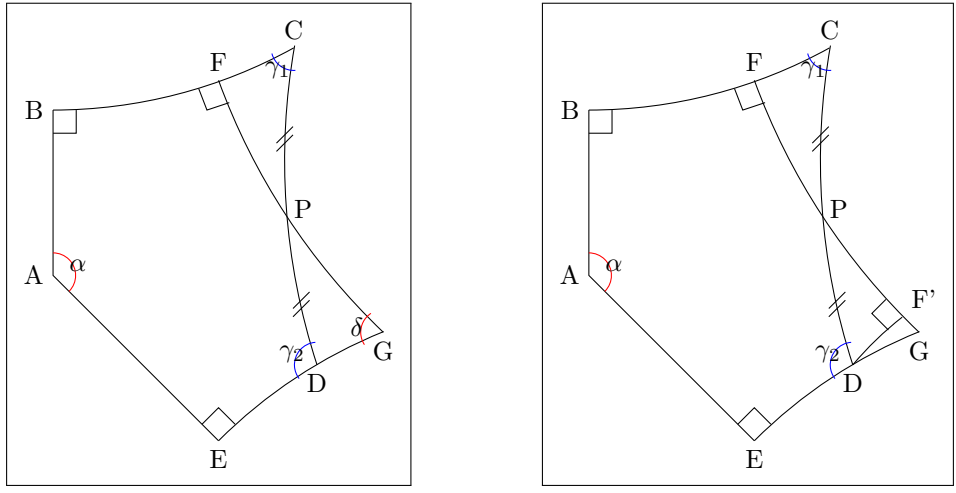
\begin{figure}
\fbox{\begin{tikzpicture}[scale = 2.2]
\draw (1, -1) coordinate (e) node[below] {E} --  (0,0) coordinate (a) node[left] {A}  -- (0,1) coordinate (b) node[left] {B} ;
\draw (0, 1) arc (270: 299 : 3 cm) coordinate (c) node[above] {C} ;
\draw (1, -1) arc (135 : 112 : 3 cm) ;
\draw (1, 1.18) coordinate (f) node[above] {F}  arc (200 : 226.3 : 4 cm) coordinate (g) node[below] {G};
\draw (1.46, 1.39) arc (170 : 198 : 4 cm) coordinate (d) node[below] {D};
\draw pic["$\alpha$", draw=red, -, angle eccentricity=1.2, angle radius=0.3 cm]
    {angle=e--a--b};
\draw pic["$\gamma_1$", draw=blue, -, angle eccentricity=1.2, angle radius=0.3 cm]
    {angle=b--c--d};
\draw pic["$\gamma_2$", draw=blue, -, angle eccentricity=1.2, angle radius=0.3 cm]
    {angle=c--d--e};
\draw pic["$\delta$", draw= red, -, angle eccentricity=1.2, angle radius=0.3 cm]
    {angle=f--g--e};
\draw (0, 0.86) -- (0.14, 0.86) -- (0.14, 1); 
\draw (0.9, - 0.9) -- (1.0, - 0.8) -- (1.1, -0.9); 
\draw (0.88, 1.13) -- (0.93, 1.0) -- (1.065, 1.045); 
\draw (1.4, 0.4)  coordinate (p) node[right] {P};
\draw (1.35, 0.8) -- (1.45, 0.9);
\draw (1.35, 0.75) -- (1.45, 0.85);
\draw (1.45, -0.2) -- (1.55, -0.1);
\draw (1.45, -0.25) -- (1.55, -0.15);
\end{tikzpicture}}
\hspace{1.5 cm}
\fbox{\begin{tikzpicture}[scale = 2.2]
\draw (1, -1) coordinate (e) node[below] {E} --  (0,0) coordinate (a) node[left] {A}  -- (0,1) coordinate (b) node[left] {B} ;
\draw (0, 1) arc (270: 299 : 3 cm) coordinate (c) node[above] {C} ;
\draw (1, -1) arc (135 : 112 : 3 cm) ;
\draw (1, 1.18) coordinate (f) node[above] {F}  arc (200 : 226.3 : 4 cm) coordinate (g) node[below] {G};
\draw (1.46, 1.39) arc (170 : 198 : 4 cm) coordinate (d) node[below] {D};
\draw pic["$\alpha$", draw=red, -, angle eccentricity=1.2, angle radius=0.3 cm]
    {angle=e--a--b};
\draw pic["$\gamma_1$", draw=blue, -, angle eccentricity=1.2, angle radius=0.3 cm]
    {angle=b--c--d};
\draw pic["$\gamma_2$", draw=blue, -, angle eccentricity=1.2, angle radius=0.3 cm]
    {angle=c--d--e};
\draw (0, 0.86) -- (0.14, 0.86) -- (0.14, 1); 
\draw (0.9, - 0.9) -- (1.0, - 0.8) -- (1.1, -0.9); 
\draw (0.88, 1.13) -- (0.93, 1.0) -- (1.065, 1.045); 
\draw (1.4, 0.4)  coordinate (p) node[right] {P};
\draw (d) arc (140 : 128.1 : 2 cm) coordinate (f') node[above right] {F'};
\draw (1.35, 0.8) -- (1.45, 0.9);
\draw (1.35, 0.75) -- (1.45, 0.85);
\draw (1.45, -0.2) -- (1.55, -0.1);
\draw (1.45, -0.25) -- (1.55, -0.15);
\draw (1.81 ,-0.33) -- (1.71, -0.23) -- (1.795,-0.145);
\end{tikzpicture}}
\caption{The hyperbolic pentagon of Lemma \ref{L:pentagon}. }
\end{figure}
\end{center}

We say that $P$ has \emph{disjoint neutralizing pairs} if every obtuse angle of $P$ belongs to a neutralizing pair, and all such neutralizing pairs are disjoint. Let us use the notation 
$$\varphi(x_1, x_2, \dots, x_m) := \sum_{i = 1}^m \frac{1}{1 + e^{2 x_i}}.$$

\begin{proposition} \label{P:reduce}
Let $P$ be a centrally symmetric hyperbolic polygon with $2m$ sides and center $o$, and let $\ell_1, \dots, \ell_m$ be the distances between $o$ 
and the midpoints of the sides. If $P$ has disjoint neutralizing pairs, there exists a centrally symmetric hyperbolic $2m$-gon $P'$ 
with no obtuse angles and such that 
$$\varphi(\ell_1, \ell_2, \dots, \ell_m) \leq \varphi(d'_1, d'_2, \dots, d'_m) $$
where $d_i'$ is the distance between $o$ and the $i$th side of $P'$.
\end{proposition}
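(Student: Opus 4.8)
The plan is to build $P'$ one neutralizing pair at a time, replacing a pair of consecutive sides of $P$ meeting at an obtuse angle by two new sides so that the resulting polygon has one fewer obtuse angle (or at least no new obtuse angle), while the quantity $\varphi$ does not decrease. Concretely, suppose $\{\gamma_i, \gamma_{i+1}\}$ is a neutralizing pair, with $\gamma_i$ obtuse (so $\gamma_{i+1} < \pi/2$ since $\gamma_i + \gamma_{i+1} \le \pi$). I would cut $P$ along the two geodesics from the center $o$ perpendicular to the $i$th and $(i+1)$st sides, obtaining a region that contains a hyperbolic pentagon of the shape $ABCDE$ in Lemma \ref{L:pentagon}: the two right angles at $B$ and $E$ are exactly the feet of these perpendiculars (so the segments $\overline{AB}$, $\overline{AE}$ are along the radii, with $A = o$), $C$ and $D$ are the two endpoints of the shared vertex side, and $\widehat C = \gamma_{i+1} < \pi/2$, $\widehat D = \gamma_i$ (relabeling so the acute one is at $C$), with $\widehat C + \widehat D \le \pi$. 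Replacing the broken path $B$–$C$–$D$–$E$ by the path $B$–$F$–$G$–$E$ produced by Lemma \ref{L:pentagon} introduces the new angle $\delta = D\widehat G F \le \pi/2$ at the new vertex $G$, and by construction the segment $\overline{FP}$ is perpendicular to $\overline{BC}$, i.e. to the side of $P$; hence the new "radius" from $o$ to the new side $\overline{BF}$ still has length at most... — and this is where the key monotonicity enters.

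The crucial numerical point is that $x \mapsto \frac{1}{1+e^{2x}}$ is strictly decreasing, so $\varphi$ increases exactly when the relevant distances $\ell_j$ decrease. After the surgery, the distance from $o$ to the new side containing $\overline{BF}$ is $d_o(o, \overline{BF})$, which equals the length of $\overline{BF}$... no: it equals the length of the perpendicular from $o$, which is along $\overline{AB}$ extended only if $\overline{BF}$ is perpendicular to $\overline{AB}$ — but $\widehat B = \pi/2$ is precisely that statement. So $d_o(o,\overline{BF}) = d_o(o, B) = d_o(o, \overline{BC}) = \ell_{i+1}$ (say), i.e. that distance is \emph{unchanged}; similarly on the other side $d_o(o,\overline{EG}) = \ell_i$ is unchanged. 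Thus the two "pair" distances are preserved, every other side of $P$ is untouched, and we have merely traded the vertex configuration $\{\gamma_i,\gamma_{i+1}\}$ for $\{\delta, \text{something} \le \pi/2\}$, strictly reducing the number of obtuse angles without changing any $\ell_j$. Doing this independently on each of the disjoint neutralizing pairs — disjointness guarantees the surgeries do not interfere, and central symmetry is preserved if we perform each pair-surgery simultaneously with its antipodal image — yields a centrally symmetric $2m$-gon $P'$ with no obtuse angles and $\varphi(\ell_1,\dots,\ell_m) = \varphi(d_1',\dots,d_m')$, which is even stronger than the asserted inequality. (If the surgery forces a side length to shrink rather than stay equal — e.g. because one must re-center or because $G$ falls inside the old polygon — then the distance decreases and $\varphi$ increases, still giving the claimed "$\le$".)

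The main obstacle I anticipate is bookkeeping rather than a single hard estimate: verifying that the object $ABCDE$ extracted from $P$ is genuinely a convex hyperbolic pentagon with the right-angle hypotheses of Lemma \ref{L:pentagon} (this needs $o$ to "see" each side perpendicularly inside $P$, which uses convexity and central symmetry), and checking that after replacing the three edges $CD$, and the two half-edges at $C$ and $D$, by the new edges through $F$ and $G$ the global polygon $P'$ is still convex, still embedded, still has $2m$ sides, and still satisfies whatever is needed downstream (the cycle condition is \emph{not} claimed for $P'$, so that is not an issue here). A secondary subtlety is ensuring that the new vertex $G$ lies on the correct side of everything so that the new side $\overline{EG}$ and the old side adjacent to $E$ really do form an angle $\le \pi/2$; this should follow from $\widehat C = \gamma_{i+1} < \pi/2$ and the explicit reflection-across-$P$ argument in the proof of Lemma \ref{L:pentagon}, but it must be spelled out. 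Once these geometric verifications are in place, the monotonicity of $x\mapsto (1+e^{2x})^{-1}$ closes the argument immediately.
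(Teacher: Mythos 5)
Your construction is the same as the paper's: extract the pentagon $ABCDE$ with $A=o$ and right angles at the feet of the perpendiculars, perform the surgery of Lemma~\ref{L:pentagon} to replace the broken path $B$--$C$--$D$--$E$ by $B$--$F$--$G$--$E$, and use the monotonicity of $x\mapsto(1+e^{2x})^{-1}$. So the strategy matches. But there is a real gap in your accounting of distances, and it sits exactly at the crucial step.

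You track the distances from $o$ to $\overline{BF}$ and to $\overline{EG}$ and correctly observe that these equal the distances to the original sides $s_2$ and $s_{2m}$ (those segments lie on the same geodesics), but you never estimate the distance from $o$ to the genuinely new side $\overline{FG}$ that replaces the side $\overline{CD}$ between the two neutralizing-pair vertices. This is the only distance that actually changes, and it is where the inequality comes from. The observation you need is that the midpoint $P$ of $\overline{CD}$ lies on the new side $\overline{FG}$ by construction (the line $\overline{FP}$ through $P$ perpendicular to $\overline{BC}$ \emph{is} the geodesic supporting the new side), so
$$ d_1' = d\bigl(o, \overline{FG}\bigr) \;\leq\; d(o, P) \;=\; \ell_1 . $$
Combined with $d_j' = d_j \leq \ell_j$ for all other $j$, monotonicity of $\varphi$ gives the claimed $\varphi(\ell_1,\dots,\ell_m) \leq \varphi(d_1',\dots,d_m')$. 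Note also that your asserted equality $\varphi(\ell)=\varphi(d')$ (``without changing any $\ell_j$'') is generally false: equality would require the foot of the perpendicular from $o$ to $\overline{FG}$ to be $P$ itself, which need not hold. You flag in your final parenthetical that a decrease would still be fine, which saves the conclusion, but the main text should not claim equality, and must explicitly compare $d(o,\overline{FG})$ with $\ell_1$ rather than only discussing the truncated sides $\overline{BF}$ and $\overline{EG}$. Finally, you conflate side-distances and midpoint-distances when you write $d_o(o,\overline{BF}) = \ell_{i+1}$; this is the side-distance $d_{i+1}$, which is $\leq \ell_{i+1}$, and that inequality is also used (harmlessly) in the chain.

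Your concern about verifying that $ABCDE$ is a genuine convex pentagon satisfying the hypotheses of Lemma~\ref{L:pentagon}, and that the modified polygon remains convex and embedded, is well taken; the paper does not spell this out either, and a careful write-up would address it.
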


\begin{proof}
Let us denote as $d_i$ the distance between $o$ and the $i$th side of $P$. Note that by definition $d_i \leq \ell_i$ for all $i$.

If the polygon $P$ only has acute angles, we take $P = P'$ and note that by definition $d_i' = d_i \leq \ell_i$, which yields the claim. 

Suppose now that the hyperbolic polygon $P$ has one obtuse angle, say $\gamma_1$, which belongs to a neutralizing pair, and let $\ell_1$ correspond to the side adjacent to the obtuse angle and the other angle, say $\gamma_2$, in the neutralizing pair.
Consistently with this choice, let us denote as $s_1, s_2, \dots, s_{2m}$ the sides of $P$.

Let us now consider the hyperbolic pentagon delimited by $s_{2m}, s_1, s_2$, and the orthogonal projections from $o$ to $s_2$ and $s_{2m}$. 
Let us call this pentagon $ABCDE$, where $o = A$, the side $s_1$ is denoted $\overline{DC}$, the orthogonal projection from $o$ to $s_2$ is $B$, 
and the orthogonal projection from $o$ to $s_{2m}$ is $E$.

Using Lemma \ref{L:pentagon}, let us replace $P$ by a new polygon $P'$ obtained substituting the pentagon $ABCDE$ by the pentagon $ABFGE$, which satisfies $\widehat{F} = \pi/2 $ and $\widehat{G} \leq \pi/2$.
If we denote by $d_1'$ the distance between $o = A$ and $\overline{FG}$, then we have 
$$d_1' = d(A, \overline{FG}) \leq d(A, P) = \ell_1.$$ 
On the other hand, note that for $i = 2, \dots, m$ the distance between $o$ and the $i$th side is the same for $P$ and $P'$. That is, 
$d_i = d_i'$ for $i = 2, \dots, m$.
Hence, 
$$\varphi(\ell_1, \ell_2, \dots, \ell_m) \leq \varphi(\ell_1, d_2, \dots, d_m) \leq \varphi(d_1', d_2, \dots, d_m) = \varphi(d_1', d_2', \dots, d_m').$$
If there are more than one neutralizing pairs, we can analogously replace each side adjacent to the pair by rotating it around its midpoint.
This proves the claim.
\end{proof}


\subsection{The general case}


Let $(p_i)_{i =1}^{2m}$ denote the vertices of $P$ and $(q_i)_{i =1}^{2m}$ denote the midpoints of the sides, indexed so that $q_i$ lies between $p_{i-1}$ and $p_{i}$. Let $o$ denote the center of symmetry of $P$. 
Let $\alpha_i = q_i \widehat{o} q_{i+1}$ be the angles at the origin,  and $\gamma_i = q_i \widehat{p}_i q_{i+1}$ the angles at the vertices of $P$. By the cycle condition and symmetry we have 
$$\sum_{i = 1}^m \alpha_i = \pi, \qquad \sum_{i = 1}^{m} \gamma_i = \frac{2 \pi}{k},$$
where $k \geq 1$ is an integer. Note that if $k \geq 2$, at most one of the $\gamma_i$ is obtuse, hence $P$ has disjoint neutralizing pairs. 
However, if $k = 1$, $P$ need not have disjoint neutralizing pairs; in particular, it may have three consecutive obtuse angles. In order to deal with this case, we need the notion of \emph{dual polygon}. 

\begin{figure}
\includegraphics[width=0.48 \textwidth]{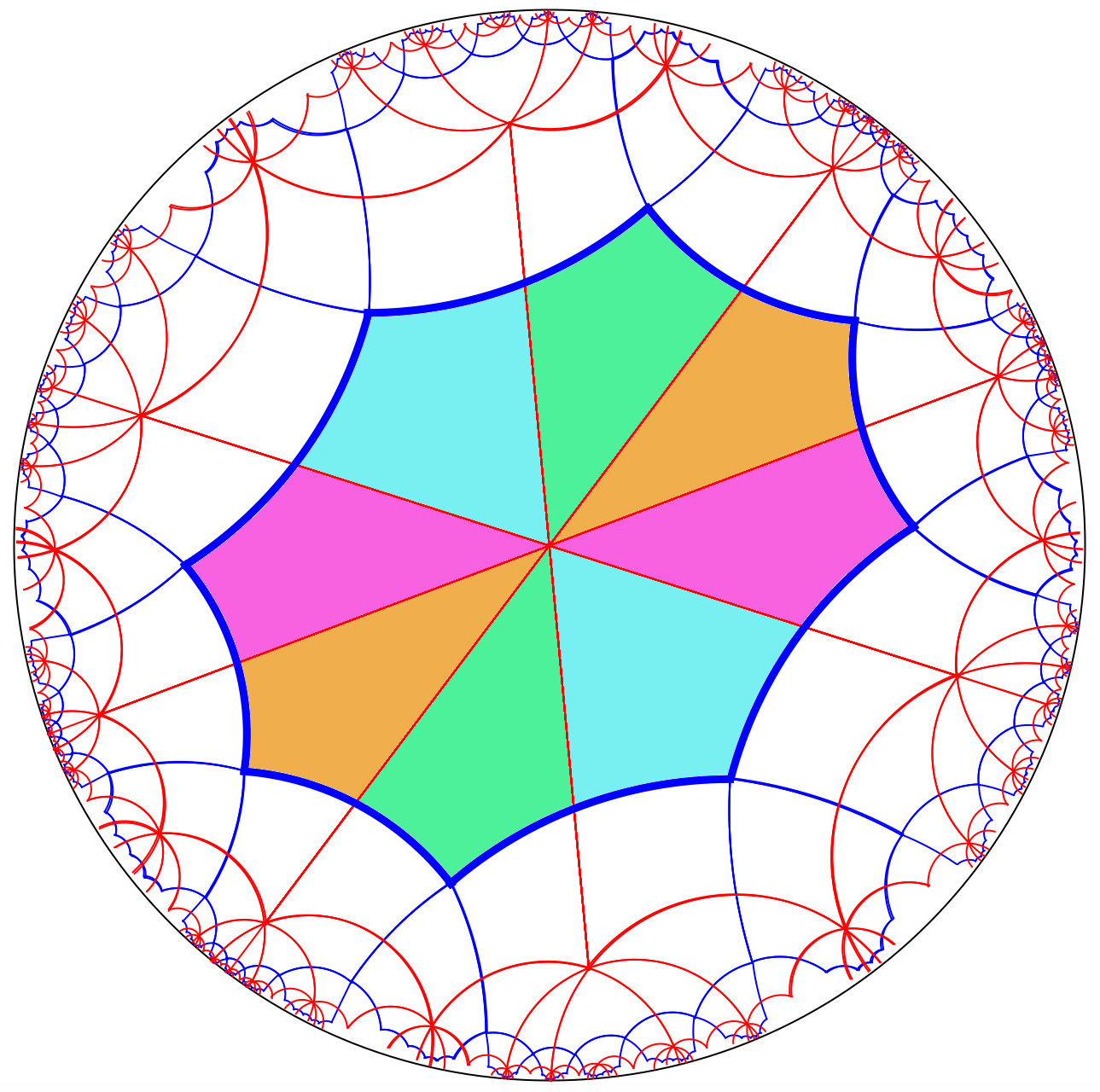}
\includegraphics[width=0.48 \textwidth]{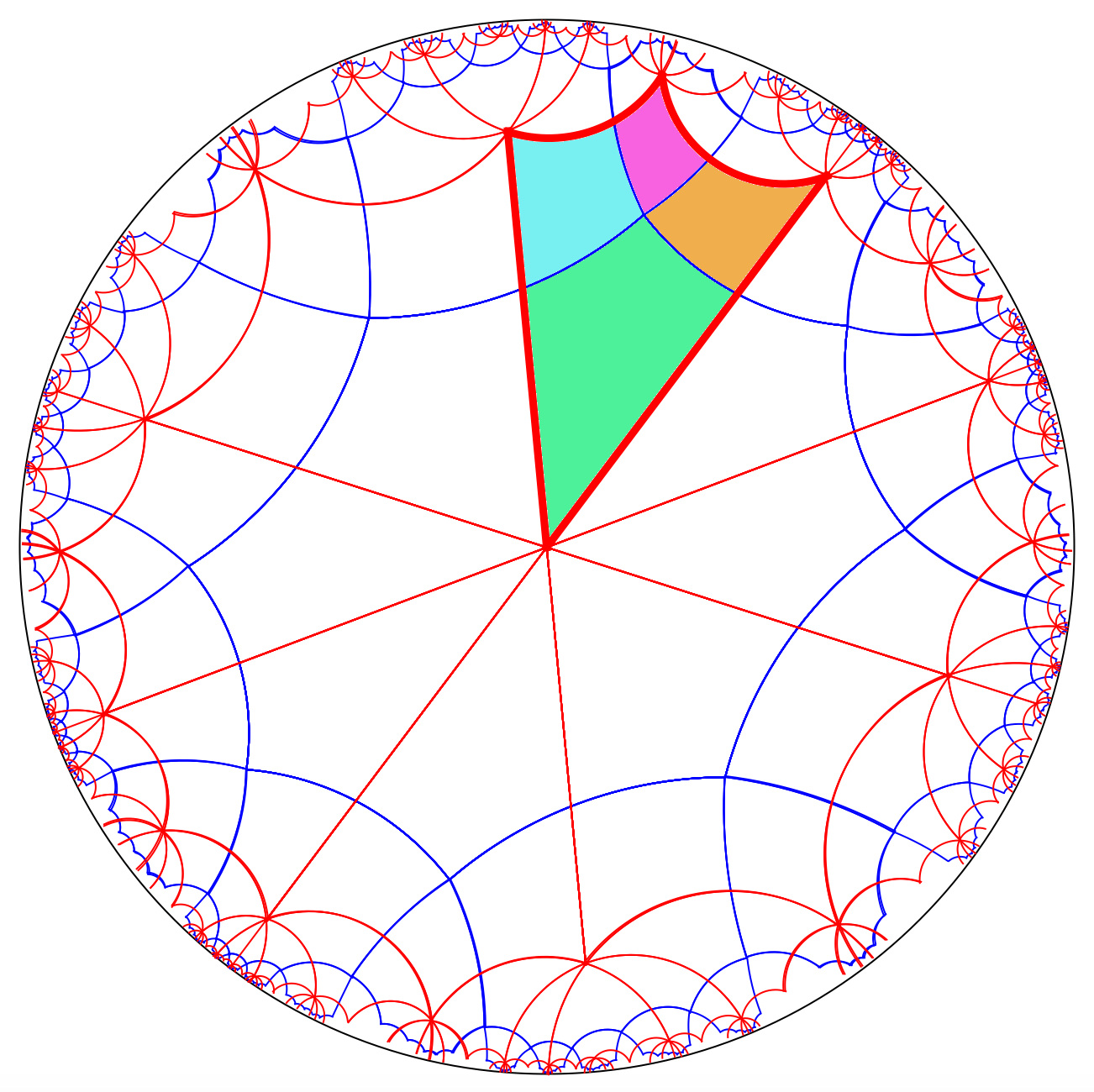}
\caption{On the left: the polygon $P$, in blue. On the right: the dual polygon $\widehat{P}$, in red. The highlighted quadrilaterals can be rearranged as shown to form the dual polygon.}
\label{F:dual}
\end{figure}

\subsection{Dual polygons}

Given a centrally symmetric polygon $P$ with center $o$, we construct its \emph{dual polygon} $\widehat{P}$
as follows. 


Let $Q_i$ be the quadrilateral delimited by $o, q_i, p_i, q_{i+1}$. 
As in Figure \ref{F:dual}, we can cut and rearrange the $Q_i$'s with $1 \leq i \leq m$ by gluing all vertices $p_i$ to a single point, which we now denote as $v$. Since the sum of all angles at $p_i$ is $2 \pi$, this creates a new polygon with sides of lengths $2 \ell_1, \dots, 2 \ell_m$. The angles of $\widehat{P}$ are $\alpha_1, \dots, \alpha_m$, hence their sum is $\pi$. 
We define the pair $(\widehat{P}, v)$ to be the dual polygon to $(P, o)$. 

The duality relation 
$$(P, o) \leftrightarrow (\widehat{P}, v)$$ 
defines a bijective correspondence between centrally symmetric $2m$-gons with sum of angles $4 \pi$ 
and $m$-gons with sum of angles $\pi$ together with a choice of a point inside them. 

Given a polygon $P$ with $2m$ sides and a point $o$ inside $P$, we define 
$$\Sigma(P) := \sum_{i =1}^{2 m} \frac{1}{1 + e^{2 \ell_i}} $$
where $\ell_i$ are the segments connecting $o$ and the midpoint of the $ith$ side.
Let us also define 
$$\widehat{\Sigma}(P) := \sum_{i = 1}^m \frac{1}{1 + e^{s_i}}$$ 
where $s_i$ are the lengths of the sides of $P$. 
Then note that we have 
$$\Sigma(P) = \widehat{\Sigma}(\widehat{P}).$$ 
In particular, $\Sigma(P)$ does \emph{not} depend on $v$ but only on 
$\widehat{P}$.

\begin{lemma} \label{L:dual-trick}
Let $P$ be a centrally symmetric hyperbolic polygon with $2m$ sides and total sum of its interior angles $4 \pi$. 
Then there exists a centrally symmetric hyperbolic polygon $P'$ with the same number of sides, 
so that $\Sigma(P) = \Sigma(P')$ and so that $P'$ has at most four obtuse angles, which belong to disjoint neutralizing pairs. 
\end{lemma}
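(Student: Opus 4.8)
The plan is to exploit the duality correspondence $(P,o) \leftrightarrow (\widehat P, v)$ together with the freedom, highlighted in the remark after the definition of $\widehat\Sigma$, that $\Sigma(P)$ depends only on $\widehat P$ and not on the chosen interior point $v$. So first I would pass from $P$ to its dual $\widehat P$, which is a hyperbolic $m$-gon with interior angles $\alpha_1,\dots,\alpha_m$ summing to $\pi$, and recall $\Sigma(P) = \widehat\Sigma(\widehat P)$. Now $\widehat\Sigma$ is a sum over the \emph{sides} of $\widehat P$, so it depends only on the side lengths $2\ell_1,\dots,2\ell_m$, i.e.\ on the isometry class of $\widehat P$ as an unmarked polygon. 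Since $\widehat P$ has angle sum $\pi < m\cdot\frac\pi2$ once $m\ge 3$ (and the relevant cases have $m$ at least $3$, indeed the troublesome $k=1$ case forces $m$ even and $\ge 4$), $\widehat P$ automatically has at most two obtuse angles, and in fact I would argue directly that a hyperbolic polygon with angle sum $\pi$ can have at most finitely many — at most three — angles exceeding $\frac\pi2$; being careful, at most three since four obtuse angles would already contribute more than $2\pi > \pi$. That bound of (at most) three obtuse angles on $\widehat P$ is what will eventually produce the "at most four" obtuse angles on $P'$.

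Next I would choose a \emph{new} interior point $v'$ for $\widehat P$ — for instance its incenter, or more carefully a point chosen so that the feet of the perpendiculars to the sides behave well — and form the dual of $(\widehat P, v')$, call it $P'$. By the bijectivity of the duality relation, $P'$ is again a centrally symmetric hyperbolic $2m$-gon with interior angle sum $4\pi$, and $\Sigma(P') = \widehat\Sigma(\widehat{P'}) = \widehat\Sigma(\widehat P) = \Sigma(P)$, since $\widehat{P'} = \widehat P$ as an unmarked polygon. The interior angles of $P'$ are now governed by how the sides of $\widehat P$ subtend angles at $v'$: concretely, the angle of $P'$ at the vertex between $q'_i$ and $q'_{i+1}$ is $\gamma'_i$ where $\pi - \gamma'_i$ is (half of) the angle subtended at $v'$ by the two half-sides meeting at $p'_i$, and the obtuse angles of $P'$ correspond precisely to sides of $\widehat P$ that $v'$ "sees" at a small enough angle. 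So the task reduces to: choose $v'$ so that the sides of $\widehat P$ subtended at a reflex-ish angle at $v'$ are few and organized into disjoint neutralizing pairs on $P'$.

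The main obstacle — and the step I expect to require real work — is exactly this choice of $v'$ and the verification that the resulting obtuse angles of $P'$ form \emph{disjoint neutralizing pairs}. If $v'$ is placed near a vertex $p_j$ of $\widehat P$, the two sides of $\widehat P$ incident to $p_j$ are seen at nearly a straight angle, which on $P'$ forces the two angles flanking the corresponding pair of sides to be obtuse — but their sum is controlled by $\pi$ plus the angle of $\widehat P$ at $p_j$, which need not be $\le\pi$; so a naive "push $v'$ into a corner" does not obviously work, and one must instead place $v'$ generically in the interior and track, via the hyperbolic law of cosines / the parametrization \eqref{E:cosh}, which of the $\gamma'_i$ exceed $\pi/2$. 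I would argue that for a suitably chosen $v'$ (e.g.\ near the "center" in the sense that all $\ell'_i$ are comparable), the number of obtuse $\gamma'_i$ is at most the number of obtuse angles of $\widehat P$, hence at most three — and then, by a parity/adjacency argument using that $\widehat P$'s obtuse angles can be taken non-adjacent after a further small perturbation of $v'$, arrange that each obtuse $\gamma'_i$ sits in a neutralizing pair $\{\gamma'_i,\gamma'_{i\pm1}\}$ with the pairs disjoint; the "four" rather than "three" appears because one obtuse angle of $\widehat P$ can split into a pair on $P'$. Once $P'$ has disjoint neutralizing pairs with at most four obtuse angles and $\Sigma(P')=\Sigma(P)$, we are done, and Proposition \ref{P:reduce} can then be applied to $P'$ to finish the reduction to the acute case.
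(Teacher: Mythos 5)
Your overall framework is exactly the paper's: pass to the dual polygon $\widehat P$, exploit that $\Sigma(P)=\widehat\Sigma(\widehat P)$ depends only on $\widehat P$ and not on the marked interior point, move to a better point $v'$, and dualize back to get $P'$. However, at the step you yourself flag as ``the step I expect to require real work'' there is a genuine gap, and the heuristic you sketch does not close it. First, a small arithmetic slip: since the interior angles of $\widehat P$ sum to $\pi$ and are all positive, $\widehat P$ has at most \emph{one} obtuse angle, not ``at most three.'' More importantly, the obtuse angles of $P'$ are the angles $\gamma_i' = q_i\widehat{v'}q_{i+1}$ subtended at $v'$ by consecutive midpoints of $\widehat P$, and these are governed by the position of $v'$, not by which vertex angles of $\widehat P$ happen to be obtuse. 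Your proposed bound ``the number of obtuse $\gamma_i'$ is at most the number of obtuse angles of $\widehat P$'' for $v'$ placed ``near the center'' is unjustified, and I see no reason it should hold; nor does ``comparable $\ell_i'$'' by itself produce disjoint neutralizing pairs.

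The paper's proof supplies exactly the missing construction, and it is quite clean: choose two \emph{non-adjacent} sides of $\widehat P$ and take $v'$ on the segment joining their midpoints $q_j$ and $q_k$. Because $q_j,v',q_k$ are collinear, the segment splits the $m$ angles $\gamma_1',\dots,\gamma_m'$ at $v'$ into two consecutive blocks each summing to exactly $\pi$. A block of positive angles summing to $\pi$ contains at most one obtuse angle, so at most two of the $\gamma_i'$ are obtuse, hence (doubling by central symmetry) $P'$ has at most four obtuse angles. Moreover each obtuse $\gamma_i'$ has a neighbour $\gamma_{i\pm1}'$ lying in the \emph{same} block, and since the block sums to $\pi$ one gets $\gamma_i'+\gamma_{i\pm1}'\le\pi$, i.e.\ a neutralizing pair; disjointness follows because the two obtuse $\gamma_i'$ lie in disjoint blocks, so the pairs they generate live in disjoint index sets. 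This ``pick $v'$ on a chord between midpoints of non-adjacent sides'' trick is the substantive content of the lemma, and it is what your sketch needs in place of the unsupported ``generic/central $v'$'' claim.

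Two further small corrections: the angle of $P'$ at $p_i'$ equals $\gamma_i'=q_i\widehat{v'}q_{i+1}$ on the nose, not ``$\pi$ minus (half of) the angle subtended at $v'$'' as you wrote; and note the case of interest here forces $m\ge 4$ (a hyperbolic $2m$-gon with angle sum $4\pi$ requires $4\pi<(2m-2)\pi$), which is what guarantees that two non-adjacent sides of $\widehat P$ exist.
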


\begin{proof}
Let $\widehat{P}$ be the dual polygon to $P$, as defined above. 
We claim that we can pick another point $v'$ inside $\widehat{P}$ so that at most two of the angles 
at $v'$ are obtuse. This is just because we can pick two non-adjacent sides of  $\widehat{P}$ and join their midpoints 
by a segment. Now, let us pick $v'$ on that segment and connect it to all midpoints of the sides of $\widehat{P}$. 

Then, out of the angles $\gamma_i' := q_i \widehat{v'} q_{i+1}$ with $1 \leq i \leq m$, at most two of them can be obtuse. 
Then we define $P'$ to be the dual of $(\widehat{P}, v')$. Since $P$ and $P'$ have the same dual, we have 
$\Sigma(P) = \Sigma(P')$.
Thus, in $P'$ there are at most $4$ obtuse angles $\gamma_i'$, and for all of them there exists another adjacent angle $\gamma'_{i\pm 1}$ 
so that $\gamma'_i + \gamma'_{i\pm 1} < \pi$.  Hence, $P'$ has neutralizing pairs.
\end{proof}

\medskip

By putting together these reductions we can complete the proof of Theorem \ref{T:ineq-mcs}. 
Let us see the details.

\begin{proof}[Proof of Theorem \ref{T:ineq-mcs}]
Let us first suppose that $\gamma_i \leq \pi/2$ for all $i$. 
We know by \eqref{E:arccos} that $\sum_{i = 1}^m \arccos(z_i z_{i+1}) \leq \pi$ with $0 < z_i < 1$. Then we need to show that $\sum_{i = 1}^m z_i > m- 1$.
Suppose not, then there exists $z_i$ with $\sum_{i = 1}^m z_i \leq m-1$. Then there exists $(z_i')_{i = 1}^m$ with $0 \leq z_i \leq z_i' \leq 1$ 
for all $i$, so that $\sum z_i' = m - 1$. Then we have, by Theorem \ref{T:ineq},
$\pi \leq \sum_{i = 1}^m \arccos ( z_i' z_{i+1}') \leq  \sum_{i = 1}^m \arccos ( z_i z_{i+1}) \leq \pi$, 
hence $\sum_{i = 1}^m \arccos ( z_i' z_{i+1}') = \pi$, which by the second part of Theorem \ref{T:ineq} implies $z_i' = 0$ for some $i$, hence 
also $z_i = 0$, which is a contradiction. 

In the general case, we first apply Lemma \ref{L:dual-trick} to reduce to the case where $P$ has disjoint neutralizing pairs. 
Then, by applying Proposition \ref{P:reduce}, we reduce to the case of $P$ having no obtuse angles, 
which we can deal with as above. This completes the proof. 
\end{proof}

\begin{proof}[Proof of Theorem \ref{T:main}]
Theorem \ref{T:ineq-mcs} shows that the criterion of Theorem \ref{T:criterion} holds, proving the singularity of hitting measure.
\end{proof}


\section{Coxeter groups}

Let $P$ be a centrally symmetric convex polygon with $2 m$ sides in $\mathbb{H}^2$, with each angle $\gamma_i$ at the vertices being equal to $\frac{\pi}{k_i}$ for some natural $k_i > 1$, for $1 \leq i \leq 2m$. Then, due to \cite[Theorem 6.4.3]{Davis}, the group of isometries generated by hyperbolic reflections $R := \{ r_1, \dots, r_{2 m} \}$ with respect to the sides of $P$ acts geometrically on $\mathbb{H}^2$. Therefore, it is a hyperbolic group, so Theorem \ref{T:BHM} can be applied to it. Such groups are referred to as \textit{hyperbolic Coxeter groups}. 

Below we will show that Theorem \ref{T:main} can be quickly generalized to hyperbolic Coxeter groups.

\begin{lemma}
	\label{L:coxeter_cyl}
	Let $m > 1$. Consider a random walk on the free product of $2m$ copies of $\mathbb{Z} / 2 \mathbb{Z}$
	$$F'_{2m} = \left\langle s_1, \dots, s_{2m} \ | \ s_i^2 = e \right\rangle,$$ 
	defined by a probability measure $\mu$ on the generators. If we denote  $x_i := F_\mu(e, s_i)$ for $1 \le i \le 2m$, and the hitting measure on the boundary of $F'_{2m}$ by $\nu$, then 
	\[
	\nu(C(s_i)) = \frac{x_i}{1+x_i}.
	\]
\end{lemma}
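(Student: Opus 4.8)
The plan is to adapt the argument of Lemma \ref{L:cylinder} to the free product $F'_{2m} = \mathbb{Z}/2\mathbb{Z} \ast \cdots \ast \mathbb{Z}/2\mathbb{Z}$. The key structural difference from the free group case is that each generator $s_i$ is an involution, so $s_i^{-1} = s_i$; in particular, once the random walk reaches $s_i$, the relevant ``return'' probability is again governed by $x_i = F_\mu(e, s_i)$ rather than by a separate $\check{x}_i$. So I expect the answer to be the specialization $\check{x}_i = x_i$ of the formula in Lemma \ref{L:cylinder}, i.e.
\[
\nu(C(s_i)) = \frac{x_i(1 - x_i)}{1 - x_i^2} = \frac{x_i}{1+x_i},
\]
which matches the claim.

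First I would make precise what $\partial F'_{2m}$ and the cylinder $C(s_i)$ mean: boundary points are infinite reduced words $s_{j_1} s_{j_2} s_{j_3} \cdots$ with $j_k \neq j_{k+1}$ for all $k$, and $C(s_i)$ is the set of such words whose first letter is $s_i$. Then I would decompose a random walk path converging to a point of $C(s_i)$ according to its successive visits to $s_i$: write $e \to s_i \nrightarrow e$ for the event that the walk eventually hits $s_i$ and thereafter never returns to $e$, $e \to s_i \to e \to s_i \nrightarrow e$ for the event that it hits $s_i$, returns to $e$, hits $s_i$ again and then never returns, and so on. As in the proof of Lemma \ref{L:cylinder}, the event $\{\text{limit point lies in } C(s_i)\}$ is the disjoint union of these events (the point is that the last passage through $s_i$ with no subsequent return to $e$ pins down the first letter of the boundary word as $s_i$; here I will need to observe that because $s_i^2 = e$, the relevant cancellations behave exactly as in the free case with $s_i^{-1}$ replaced by $s_i$). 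By the strong Markov property and translation-invariance of the walk, the $n$-th such event has probability $F_\mu(e,s_i)^{n+1} F_\mu(e, s_i)^{n}(1 - F_\mu(e,s_i)) = x_i^{2n+1}(1-x_i)$, and summing the geometric series gives $\nu(C(s_i)) = x_i(1-x_i)\sum_{n\geq 0} x_i^{2n} = \dfrac{x_i(1-x_i)}{1-x_i^2} = \dfrac{x_i}{1+x_i}$.

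The main obstacle, and the step deserving the most care, is justifying the combinatorial claim that the decomposition above exactly accounts for the event that the limiting boundary word starts with $s_i$ — i.e. that tracking last-passages through $s_i$ correctly identifies the first letter of the limit. In the free group this works because after the final visit to $s_i$ with no return to $e$, no further cancellation can erase the initial $s_i$; in $F'_{2m}$ one must check the analogous statement using the relation $s_i^2 = e$, noting that "returning to $e$" is precisely the mechanism by which an initial $s_i$ could be cancelled, so the conditioning on "no return to $e$ after the last visit to $s_i$" is exactly what guarantees the boundary word begins with $s_i$. Once this is in place, everything else is the same geometric-series computation as in Lemma \ref{L:cylinder}, with $\check{x}_i$ replaced throughout by $x_i$.
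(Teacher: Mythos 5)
Your proposal is correct and follows essentially the same route as the paper: decompose by successive visits to $s_i$ (equivalently, by the parity of crossings of the edge $e$--$s_i$), use $s_i^{-1}=s_i$ to replace $\check{x}_i$ by $x_i$ in the formula from Lemma \ref{L:cylinder}, and sum the resulting geometric series to get $x_i/(1+x_i)$. The paper's proof is the same computation, phrased via the observation that the Cayley graphs of $F_m$ and $F'_{2m}$ are isometric and that a path converges into $C(s_i)$ iff it crosses the edge $s_i$ an odd number of times.
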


\begin{proof}
The proof of this lemma can be obtained in a similar way to the proof of Lemma \ref{L:cylinder} for $F_m$, because the Cayley graphs for $F_m$ and $F'_{2m}$ are  isometric. 

More precisely, a sample path converges to the boundary of the cylinder $C(s_i)$ if and only if 
it crosses the edge $s_i$ an odd number of times. This leads to the following computation:
	\[
	\begin{aligned}
		\nu(C(s_i)) &= \mathbb{P}(e \rightarrow s_i \nrightarrow e) + \mathbb{P}(e \rightarrow s_i \rightarrow e \rightarrow s_i \nrightarrow e) + \dots = \\ 
		&= \sum_{n=0}^{\infty} F_\mu(e, s_i)^{2 n+1} (1 - F_\mu(e, s_i)) \\
		& = \sum_{k = 1}^\infty (-1)^{k+1} x_i^k = \frac{x_i}{1 + x_i}.
	\end{aligned}
	\]

\end{proof}


A measure $\mu$ on the set $R = \{ r_1, \dots, r_{2 m} \}$ of reflections through the sides of $P$ is called 
 \textit{geometrically symmetric} if $\mu(r_i) = \mu(r_{i+m})$ for each $1 \leq i \leq m$. 

\begin{theorem}
	\label{T:Coxeter_ineq}
	Let $\mu$ denote a geometrically symmetric measure supported on the generators $R = \{ r_1, \dots, r_{2 m} \}$ of a hyperbolic Coxeter group. Suppose that
	\begin{equation}
		\label{coxeter_ineq_itself}
		\sum_{i=1}^m \dfrac{1}{1 + e^{\ell(r_i r_{i+m})/2}} < \dfrac{1}{2}.
	\end{equation}
	Then the hitting measure $\nu$ in $\partial \mathbb{D}$ is singular with respect to Lebesgue measure. 
\end{theorem}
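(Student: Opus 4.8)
The plan is to mirror the proof of Theorem \ref{T:criterion}, replacing the free group by the free product $F'_{2m} = \langle s_1, \dots, s_{2m} \mid s_i^2 = e\rangle$ and using Lemma \ref{L:coxeter_cyl} in place of Lemma \ref{L:cylinder}. First I would introduce the homomorphism $p : F'_{2m} \to G$ sending $s_i \mapsto r_i$, which is well-defined since the $r_i$ are involutions, and let $\widetilde{\mu}$ be the measure on $F'_{2m}$ with $\widetilde{\mu}(s_i) = \mu(r_i)$. Set $x_i := F_{\widetilde{\mu}}(e, s_i)$. By Lemma \ref{L:coxeter_cyl}, $\widetilde{\nu}(C(s_i)) = \frac{x_i}{1+x_i}$, and since the $2m$ cylinders are disjoint and cover $\partial F'_{2m}$, we get $\sum_{i=1}^{2m} \frac{x_i}{1+x_i} = 1$. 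By geometric symmetry $\mu(r_i) = \mu(r_{i+m})$, and I would argue (using the structure of the walk, or simply pairing the generators) that $x_i = x_{i+m}$; hence $\sum_{i=1}^m \frac{x_i}{1+x_i} = \frac{1}{2}$.

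Next, assuming for contradiction that $\nu$ is absolutely continuous, I would apply Lemma \ref{L:small-tr} to the loxodromic element $g_i := r_i r_{i+m} \in G$, combined with Lemma \ref{L:homo} applied to $p$. The point is that $p(s_i s_{i+m}) = r_i r_{i+m} = g_i$, so $d_\mu(e, g_i) \le d_{\widetilde{\mu}}(e, s_i s_{i+m})$, and by the triangle inequality for the Green metric $d_{\widetilde{\mu}}(e, s_i s_{i+m}) \le d_{\widetilde{\mu}}(e, s_i) + d_{\widetilde{\mu}}(e, s_{i+m}) = -\log x_i - \log x_{i+m} = -2\log x_i$. Therefore $\ell(g_i) \le d_\mu(e, g_i) \le -2 \log x_i$, i.e. $x_i \le e^{-\ell(g_i)/2}$, for every $i$. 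Consequently $\frac{x_i}{1+x_i} \le \frac{1}{1 + 1/x_i} \le \frac{1}{1 + e^{\ell(g_i)/2}}$, and summing over $i = 1, \dots, m$ gives
$$\frac{1}{2} = \sum_{i=1}^m \frac{x_i}{1+x_i} \le \sum_{i=1}^m \frac{1}{1 + e^{\ell(r_i r_{i+m})/2}},$$
contradicting hypothesis \eqref{coxeter_ineq_itself}. Hence $\nu$ is singular.

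The main obstacle I anticipate is the clean justification that $x_i = x_{i+m}$ (and more generally that the symmetry hypothesis on $\mu$ transfers faithfully through $p$); this requires checking that the first-passage probabilities in $F'_{2m}$ depend only on the $\widetilde{\mu}$-weights of the generators in the way the pairing demands, which follows from the recursive/combinatorial structure of walks on a free product but should be stated carefully. A secondary point worth spelling out is why $g_i = r_i r_{i+m}$ is loxodromic: the reflections $r_i$ and $r_{i+m}$ are in the lines supporting two opposite, hence disjoint and non-asymptotic, sides of $P$, so their product is a hyperbolic translation along the common perpendicular — indeed $\ell(r_i r_{i+m})$ equals twice the distance between those two lines. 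The rest is a routine repackaging of the argument in Theorem \ref{T:criterion}.
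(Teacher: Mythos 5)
Your proof is correct and follows essentially the same route as the paper's: pass to the free product $F'_{2m}$ of $2m$ copies of $\mathbb{Z}/2\mathbb{Z}$, apply Lemma \ref{L:coxeter_cyl}, then combine Lemma \ref{L:small-tr}, Lemma \ref{L:homo}, the triangle inequality for the Green metric, and the geometric symmetry to derive $\ell(r_ir_{i+m}) \le -2\log x_i$ and hence the contradiction. The one step you flag as potentially delicate, $x_i = x_{i+m}$, is handled in the paper by the same observation you make (geometric symmetry of $\widetilde{\mu}$): the automorphism of $F'_{2m}$ sending $s_j \mapsto s_{j+m}$ preserves $\widetilde{\mu}$ and hence the first-passage probabilities, so there is no genuine gap.
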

\begin{proof}
	The proof of this theorem is quite similar to the proof of Theorem \ref{T:criterion}. We consider a measure $\tilde{\mu}$ on a free product $\left\langle h_1, \dots, h_{2m} \ | \ h_i^2 = e \right\rangle $ of $2m$ copies of $\mathbb{Z} / 2\mathbb{Z}$ uniquely defined by $\tilde{\mu}(h_i) = \mu(r_i)$.
	
	If $\nu$ were to be absolutely continuous, then a similar argument would yield that
	\[
	\begin{aligned}
		\ell(r_i r_{i+m}) &\le d_{\mu}(e, r_i r_{i+m}) \le d_{\mu}(e, r_i) + d_{\mu}(e, r_{i+m})  \\ &\le d_{\tilde{\mu}}(e, h_i) + d_{\tilde{\mu}}(e, h_{i+m}) = 2 d_{\tilde{\mu}}(e, h_i) = -2\log \, x_i.
	\end{aligned}
	\]
	Keep in mind that $d_{\tilde{\mu}}(e, h_i) = d_{\tilde{\mu}}(e, h_{i+m})$ due to $\tilde{\mu}$ being geometrically symmetric as well.
	Therefore, 
	\[
	\dfrac{x_i}{1+x_i} \le \dfrac{1}{1 + e^{\ell(r_i r_{i+m})/2}}
	\]
	and due to Lemma \ref{L:coxeter_cyl} we obtain
	\[
	1 = \sum_{i=1}^{2m} \frac{x_i}{1 + x_i} \leq 2 \sum_{i=1}^{m} \dfrac{1}{1 + e^{\ell(r_i r_{i+m})/2}} < 1, 
	\]
	which delivers a contradiction.
\end{proof}

\begin{theorem}
	\label{T:Coxeter-sing}
	The hitting measure of a nearest-neighbour random walk generated by a geometrically symmetric measure on a Coxeter group associated with a centrally symmetric polygon is singular with respect to Lebesgue measure on $\partial \mathbb{D}$.
\end{theorem}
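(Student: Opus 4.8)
The plan is to deduce Theorem~\ref{T:Coxeter-sing} from the singularity criterion of Theorem~\ref{T:Coxeter_ineq} in exactly the same way Theorem~\ref{T:main} was deduced from Theorem~\ref{T:criterion}: namely, by establishing the purely geometric inequality \eqref{coxeter_ineq_itself} for the reflection generators of a Coxeter group attached to a centrally symmetric polygon. So the real content is to prove that, for a centrally symmetric hyperbolic $2m$-gon $P$ with angles $\pi/k_i$, the translations $g_i := r_i r_{i+m}$ satisfy $\sum_{i=1}^m \frac{1}{1+e^{\ell(g_i)/2}} < \frac12$.

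First I would identify the geometry of $g_i = r_i r_{i+m}$. The composition of reflections in the two opposite sides $s_i$ and $s_{i+m}$ of $P$ is a hyperbolic translation along the common perpendicular of those two lines, with translation length equal to twice the distance $\operatorname{dist}(s_i, s_{i+m})$ between the two opposite sides. By central symmetry, the center $o$ of $P$ lies on this common perpendicular and is equidistant from the two sides, so $\ell(g_i)/2 = \operatorname{dist}(s_i,s_{i+m})/2 \cdot 2 \cdot \tfrac12$ — more carefully, $\ell(r_i r_{i+m}) = 2\,\operatorname{dist}(s_i, s_{i+m})$ and $\operatorname{dist}(s_i,s_{i+m}) = 2 d_i$ where $d_i = \operatorname{dist}(o, s_i)$, so $\ell(g_i)/2 = 2 d_i$. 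Hence the target inequality becomes $\sum_{i=1}^m \frac{1}{1+e^{2 d_i}} < \frac12$, which is precisely the inequality $\Sigma(P) < 1$ (written in the half-sum form) that occupies Sections~\ref{S:geometry}--\ref{S:obtuse}. Indeed, the Coxeter hypothesis forces $\gamma_i = \pi/k_i$, so $\sum_{i=1}^{2m}\gamma_i \le 2\pi \cdot \frac{2m}{\,?\,}$... here I would simply observe that the cycle condition needed for Theorem~\ref{T:ineq-mcs} is automatically satisfied (each $\gamma_i = \pi/k_i$, and by symmetry $\sum_{i=1}^m \gamma_{2i} = \sum_{i=1}^m \gamma_{2i-1}$; and since all angles are of the form $\pi/k_i$, Poincaré's theorem applies directly). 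Then Theorem~\ref{T:ineq-mcs} applied to the translations identifying opposite sides — whose translation lengths are $\ge 2 d_i$ with equality need not hold, but the bound $\Sigma(P) < 1$ is proven directly in terms of the $d_i$ — gives exactly \eqref{coxeter_ineq_itself}.

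Concretely, the steps in order: (1) verify that a centrally symmetric polygon with angles $\pi/k_i$ satisfies the hypotheses under which Theorem~\ref{T:ineq-mcs} (equivalently $\Sigma(P)<1$) was established; (2) compute $\ell(r_i r_{i+m}) = 4\, d_i$, where $d_i = \operatorname{dist}(o, s_i)$, using that the product of reflections in disjoint lines is a translation along their common perpendicular and that central symmetry places $o$ on that perpendicular; (3) rewrite the geometric inequality $\Sigma(P) = \sum_{i=1}^{m}\frac{1}{1+e^{2 d_i}} + \sum_{i=1}^m \frac{1}{1+e^{2 d_i}} < 1$ (opposite sides contribute equally, so $\Sigma(P) = 2\sum_{i=1}^m \frac{1}{1+e^{2d_i}}$) as $\sum_{i=1}^m \frac{1}{1+e^{\ell(r_ir_{i+m})/2}} < \frac12$; (4) invoke Theorem~\ref{T:Coxeter_ineq} to conclude singularity.

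The main obstacle is step (2): pinning down the exact relation between the translation length of $r_i r_{i+m}$ and the inradius-type distance $d_i$, and making sure the normalization matches the "$/2$" in \eqref{coxeter_ineq_itself}. One must be careful that $r_i r_{i+m}$ translates by $2\operatorname{dist}(s_i, s_{i+m})$ (the standard fact for a product of two reflections in ultraparallel lines), and that by symmetry $\operatorname{dist}(s_i, s_{i+m}) = 2 d_i$, giving $\ell(r_i r_{i+m})/2 = 2 d_i$; then the geometric inequality from Section~\ref{S:obtuse}, which is stated for $\sum \frac{1}{1+e^{2 d_i}}$ over $i=1,\dots,m$ being less than $\frac12$, matches \eqref{coxeter_ineq_itself} on the nose. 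A secondary point is checking that the angle condition for the polygon-inequality machinery (cycle condition / Poincaré) is subsumed by the Coxeter condition $\gamma_i = \pi/k_i$; this is routine but should be stated. Once these identifications are in place, the theorem follows immediately.

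\begin{proof}[Proof of Theorem \ref{T:Coxeter-sing}]
Let $P$ be a centrally symmetric hyperbolic polygon with $2m$ sides and interior angles $\gamma_i = \pi/k_i$, $k_i \in \mathbb{N}^+$, and let $R = \{r_1, \dots, r_{2m}\}$ be the reflections in its sides, with $r_i$ and $r_{i+m}$ the reflections in opposite sides. Let $o$ be the center of symmetry and $d_i := \operatorname{dist}(o, s_i)$ the distance from $o$ to the $i$th side; by central symmetry $d_i = d_{i+m}$.

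Since $s_i$ and $s_{i+m}$ are opposite sides of the convex polygon $P$, they lie on disjoint (ultraparallel) geodesics, and the product $r_i r_{i+m}$ is a hyperbolic translation along their common perpendicular, with $\ell(r_i r_{i+m}) = 2\,\operatorname{dist}(s_i, s_{i+m})$. By central symmetry, $o$ lies on this common perpendicular and is equidistant from the two sides, so $\operatorname{dist}(s_i, s_{i+m}) = 2 d_i$, whence
\begin{equation} \label{E:cox-length}
\frac{\ell(r_i r_{i+m})}{2} = 2 d_i.
\end{equation}

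Because every angle of $P$ is of the form $\pi/k_i$, the polygon $P$ satisfies the hypotheses of Theorem \ref{T:ineq-mcs} (by central symmetry $\sum_{i=1}^m \gamma_{2i} = \sum_{i=1}^m \gamma_{2i-1}$, and Poincar\'e's theorem applies). Let $S = \{g_1, \dots, g_{2m}\}$ be the hyperbolic translations identifying opposite sides of $P$; since $\ell(g_i) \geq 2 d_i$, the proof of Theorem \ref{T:ineq-mcs} in fact establishes
$$\Sigma(P) = \sum_{i=1}^{2m} \frac{1}{1 + e^{2 d_i}} < 1.$$
As $d_i = d_{i+m}$, this reads $2 \sum_{i=1}^m \frac{1}{1+e^{2 d_i}} < 1$, i.e. $\sum_{i=1}^m \frac{1}{1+e^{2 d_i}} < \frac12$. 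Substituting \eqref{E:cox-length} gives
$$\sum_{i=1}^m \frac{1}{1 + e^{\ell(r_i r_{i+m})/2}} < \frac12,$$
which is precisely the hypothesis \eqref{coxeter_ineq_itself} of Theorem \ref{T:Coxeter_ineq}. That theorem then yields that the hitting measure of any geometrically symmetric nearest-neighbour random walk on the associated Coxeter group is singular with respect to Lebesgue measure.
\end{proof}
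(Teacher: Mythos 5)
Your proof is correct and follows the same route as the paper's (reduce to the geometric inequality, then plug into Theorem~\ref{T:Coxeter_ineq}), but it is more careful than the paper's two-line argument on the translation-length bookkeeping, and this care is warranted. The paper asserts $\ell(r_ir_{i+m})=2\ell(g_i)$ and then cites Theorem~\ref{T:ineq-mcs} as a black box. However, the side-pairing translation $g_i$ satisfies $\ell(g_i)=2\,d(o,q_i)$ where $q_i$ is the midpoint of the side $s_i$ (write $g_i$ as the product of the half-turns about $q_i$ and about $o$), whereas $\ell(r_ir_{i+m})=4d_i$ with $d_i=d(o,s_i)$ the perpendicular distance; since $d_i\le d(o,q_i)$ with equality only when the perpendicular foot coincides with the midpoint, one only has $\ell(r_ir_{i+m})\le 2\ell(g_i)$ in general. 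Hence the statement of Theorem~\ref{T:ineq-mcs} alone, $\sum_{g\in S}(1+e^{\ell(g)})^{-1}<1$, is a priori too weak to bound $\sum_i(1+e^{\ell(r_ir_{i+m})/2})^{-1}$. Your fix --- observing that the acute-angle case of the \emph{proof} of Theorem~\ref{T:ineq-mcs} in fact establishes the stronger inequality $\sum_{i=1}^m(1+e^{2d_i})^{-1}<\tfrac12$ directly in the perpendicular distances $d_i$ (denoted $a_i$ in Section~\ref{S:geometry}) --- is exactly what is needed and is what the paper implicitly relies on.

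Two minor points. You write $\Sigma(P)$ for $\sum_i(1+e^{2d_i})^{-1}$, but the paper defines $\Sigma(P)$ using the midpoint distances $\ell_i$, so a fresh symbol would avoid confusion. And the parenthetical ``by central symmetry $\sum_{i=1}^m\gamma_{2i}=\sum_{i=1}^m\gamma_{2i-1}$'' is only automatic for odd $m$ (central symmetry gives $\gamma_i=\gamma_{i+m}$, which swaps parities precisely when $m$ is odd). This is harmless, since the acute-angle argument you invoke does not use the cycle condition at all --- it requires only $\sum_{i=1}^m\alpha_i=\pi$ (automatic from central symmetry) and $\gamma_i\le\pi/2$ (automatic from $k_i\ge 2$) --- but for that reason it is cleaner to cite the acute-angle argument directly, as you ultimately do, rather than appeal to the hypotheses of Theorem~\ref{T:ineq-mcs}.
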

\begin{proof}
	Let us recall that $(g_i)^m_{i =1}$ denotes the translations identifying the opposite sides of $P$. It is easily seen that $\ell(r_i r_{i+m}) = 2\ell(g_i) = 2\ell(g_{i+m})$ for every $1 \le i \le m$. However, we can apply Theorem \ref{T:ineq-mcs} because there are no obtuse angles, to get
	\[
	\sum_{i = 1}^m \frac{2}{1 + e^{\ell(r_i r_{i+m})/2}} = \sum_{g \in S} \frac{1}{1 + e^{\ell(g)}} < 1.
	\]

	We finish the proof by applying Theorem \ref{T:Coxeter_ineq}.
\end{proof}


\begin{thebibliography}{99}

\bibitem[ACCS96]{ACCS}
James Anderson, Richard Canary, Marc Culler, and Peter Shalen, 
\emph{Free Kleinian groups and volumes of hyperbolic 3-manifolds}, 
J. Differential Geom. 44 (1996), 738--782. 

\bibitem[Av72]{Avez}
Andr\'e Avez, 
\emph{Entropie des groupes de type fini}, 
C. R. Acad. Sci. Paris S\'er. A-B, 275 (1972), A1363--A1366.

\bibitem[BM21]{BM}
Florent Balacheff and Louis Merlin, \emph{A curvature-free $\log(2k-1)$ theorem}, 
Proc. Amer. Math. Soc., accepted (2021), e-print \texttt{arXiv:1909.06124}.

\bibitem[BPS12]{BPS}
Balasz B\'ar\'any, Mark Pollicott, and Karoly Simon, 
\emph{Stationary measures for projective transformations: the Blackwell and Furstenberg measures}, 
J. Stat. Phys. 148 (2012), 393--421. 

\bibitem[BB07]{BB}
S\'ebastien Blach\`ere and  Sara Brofferio, 
\emph{Internal diffusion limited aggregation on discrete groups having exponential growth}, 
Probab. Theory Related Fields 137 (2007), no. 3-4, 323--343.

\bibitem[BHM11]{BHM}
S\'ebastien Blach\`ere, Peter Ha\"issinsky, and Pierre Mathieu, 
\emph{Harmonic measures versus quasiconformal measures for hyperbolic groups}, 
Ann. Sci. \'Ec. Norm. Sup\'er. (4) 44 (2011), no. 4, 683--721.

\bibitem[Bo12]{Bo}
Jean Bourgain, 
\emph{Finitely supported measures on $SL_2(\mathbb{R})$ which are absolutely continuous at infinity}, 
in: B. Klartag, S. Mendelson, V. Milman (eds), \emph{Geometric Aspects of Functional Analysis}, Lecture Notes in Mathematics 2050 (2012), 
Springer, Berlin, Heidelberg.

\bibitem[Bu78]{Buser2}
Peter Buser,  \emph{The collar theorem and examples}, 
Manuscripta Math. 25 (1978), no. 4, 349--357.

\bibitem[Bu10]{Buser} 
Peter Buser, \emph{Geometry and Spectra of Compact Riemann Surfaces}, 
Birkh\"auser Basel, 2010.

\bibitem[CLP19]{CLP}
Matias Carrasco, Pablo Lessa, and Elliot Paquette,  \emph{On the speed of distance stationary sequences}, 
e-print \texttt{arXiv:1912.12523} (2019).

\bibitem[CM07]{Connell-Muchnik}
Chris Connell and Roman Muchnik, 
\emph{Harmonicity of quasiconformal measures and Poisson boundaries of hyperbolic spaces},
Geom. Funct. Anal. 17 (2007), 707--769.

\bibitem[CS92]{CS}
Marc Culler and Peter Shalen, 
\emph{Paradoxical decompositions, 2-generator Kleinian groups, and volumes of hyperbolic 3-manifolds}, 
J. Amer. Math. Soc. 5 (1992), no. 2, 231--288.

\bibitem[Dav08]{Davis}
Davis, Michael W.,
\emph{The geometry and topology of Coxeter groups},
London Mathematical Society Monographs Series, 32. Princeton University Press, Princeton, NJ, 2008. xvi+584 pp. ISBN: 978-0-691-13138-2; 0-691-13138-4
20F55 (05B45 05C25 51-02 57M07)

\bibitem[DKN09]{DKN}
Bertrand Deroin, Victor Kleptsyn, and Andr\'es Navas, 
\emph{On the question of ergodicity for minimal group actions on the circle}, 
Mosc. Math. J. 9 (2009), no. 2, 263--303.

\bibitem[DG18]{DG}
Matthieu Dussaule and Ilya Gekhtman, 
\emph{Entropy and drift for word metric on relatively hyperbolic groups}, 
to appear, Groups Geom. Dyn., e-print \texttt{arXiv:1811.10849} (2018).

\bibitem[Fu04]{Fu04}
Louis Funar,
\emph{Lectures on Fuchsian groups and their moduli}, 
Lecture notes for the Summer School \emph{G\'eom\'etries \`a courbure n\'egative ou nulle, groupes discrets et rigidit\'es}, 
Institut Fourier, Universit\'e de Grenoble, June-July 2004.

\bibitem[Fu71]{Fu}
Harry Furstenberg, \emph{Random walks and discrete subgroups of Lie groups}, 
in \emph{Advances in Probability and Related Topics}, vol. 1, 1--63, Dekker, New York (1971).

\bibitem[FK60]{FK}
Harry Furstenberg and Harry Kesten, \emph{Products of random matrices}, 
Ann. Math. Statist. 31 (1960), no. 2, 457--469.

\bibitem[GMT15]{GMT}
Vaibhav Gadre, Joseph Maher, and Giulio Tiozzo, \emph{Word length statistics and Lyapunov exponents for Fuchsian groups with cusps}, 
New York J. Math. 21 (2015), 511--531.

\bibitem[Ga79]{Ga}
Daniel Gallo, 
\emph{Uniformization of hyperelliptic surfaces}, 
Ph.D thesis, SUNY at Stony Brook, 1979.

\bibitem[GT20]{GT}
Ilya Gekhtman and Giulio Tiozzo, \emph{Entropy and drift for Gibbs measures on geometrically finite manifolds}, 
Trans. Amer. Math. Soc. 373 (2020), no. 4, 2949--2980.

\bibitem[GMM18]{GMM}
S\'ebastien Gou\"ezel, Fr\'ed\'eric Math\'eus, and Fran\c cois Maucourant, \emph{Entropy and drift in word hyperbolic groups}, 
Invent. Math. 211 (2018), no. 3, 1201--1255.

\bibitem[Gu80]{Guivarch}
Yves Guivarc'h, \emph{Sur la loi des grands nombres et le rayon spectral d'une marche al\'eatoire}, 
Ast\'erisque 74 (1980), no. 3.

\bibitem[GL90]{GL}
Yves Guivarc'h and Yves Le Jan, \emph{Sur l'enroulement du flot g\'eod\'esique},
C. R. Acad. Sci., Paris, Ser. I 311 (1990), no. 10, 645--648.

\bibitem[He19]{He}
Yan Mary He, \emph{On the displacement of generators of free Fuchsian groups}, 
Geom. Dedicata 200 (2019), 255--264.

\bibitem[Ho01]{Ho}
Yong Hou, \emph{Critical exponent and displacement of negatively curved free groups}, 
J. Diff. Geom. 57 (2001), 173--193.

\bibitem[KP11]{KP}
Vadim Kaimanovich and Vincent Le Prince, \emph{Matrix random products with singular harmonic measure}, 
Geom. Dedicata 150 (2011), no. 1, 257--279.

\bibitem[Ko20]{Kosenko}
Petr Kosenko, \emph{Fundamental inequality for hyperbolic Coxeter and Fuchsian groups equipped with geometric distances}, 
Int. Math. Res. Not. IMRN (2020), rnaa213. 

\bibitem[LT18]{LT}
Fran\c cois Labourie and Ser Peow Tan, 
\emph{The probabilistic nature of McShane's identity: planar tree coding of simple loops},
Geom. Dedicata 192 (2018), 245--266. 
 
\bibitem[La18]{Lalley}
Steven P. Lalley, \emph{Random walks on infinite discrete groups}, 
Lecture Notes for the Northwestern Summer School in Probability, July 2018, available at 
\texttt{https://sites.math.northwestern.edu/\~{}auffing/SNAP/rw-northwestern.pdf}.

\bibitem[Le90]{Le1}
Fran\c cois Ledrappier, 
\emph{Harmonic measures and Bowen-Margulis measures}, 
Israel J. Math. 71 (1990), no. 3, 
275--287.

\bibitem[Le95]{Le2}
Fran\c cois Ledrappier, 
\emph{Applications of dynamics to compact manifolds of negative curvature}, 
in \emph{Proceedings of the International Congress of Mathematicians}, Vol. 1, 2 (Z\"urich, 1994), Birkh\"auser,
Basel, 1995, 1195--1202.



\bibitem[LS84]{LS}
Terry Lyons and Dennis Sullivan, \emph{Function theory, random paths and covering spaces}, 
J. Differential Geom. 19 (1984), no. 2, 299--323.

\bibitem[Ma71]{Ma}
Bernard Maskit, \emph{On Poincar\'e's theorem for fundamental polygons}, 
Adv. Math. 7 (1971), 219--230.

\bibitem[McS98]{McShane}
Greg McShane, \emph{Simple geodesics and a series constant over Teichm\"uller space}, 
Invent. Math. 132 (1998), no. 3, 607--632.

\bibitem[PR94]{PR}
Ralph Phillips and Zeev Rudnick, \emph{The Circle Problem in the Hyperbolic Plane}, 
J. Funct. Anal. 121 (1994), no. 1,  78--116.

\bibitem[RT19]{RT}
Anja Randecker and Giulio Tiozzo, 
\emph{Cusp excursion in hyperbolic manifolds and singularity of harmonic measure}, 
J. Mod. Dyn., accepted (2021). 

\bibitem[Ta19]{Tan}
Ryokichi Tanaka, \emph{Dimension of harmonic measures in hyperbolic spaces}, 
Ergodic Theory Dynam. Systems 39 (2019), no. 2,  474--499.

\bibitem[Ve00]{Vershik}
Anatoly Vershik, \emph{Numerical characteristics of groups and corresponding relations},
Journal of Mathematical Sciences 107 (2000).

\end{thebibliography}
\end{document}